\journalname{Ricerche di Matematica}
\def\e{{\varepsilon}}
\numberwithin{lem}{section}
\newcommand{\ds}{\displaystyle}
\begin{document}

\title{On attainability of optimal controls in coefficients for system of Hammerstein type with anisotropic p-Laplacian}

\titlerunning{Optimal controls for Hammerstein system with anisotropic p-Laplacian}

\author{Tiziana Durante \and Olha P. Kupenko\and Rosanna Manzo}

\authorrunning{T. Durante, O.P. Kupenko, R. Manzo}

\institute{T. Durante \at
            Universit\`{a} degli Studi di Salerno, Dipartimento di Ingegneria dell'Informazione \\
           ed Elettrica e Matematica Applicata, \\
           Via Giovanni Paolo II, 132, 84084 Fisciano (SA), Italy\\
            \email{tdurante@unisa.it}
                  \and
          O. Kupenko \at
          National Mining University, Department of System Analysis and Control, \\
          Yavornitskyi av., 19, 49005 Dnipro, Ukraine, \\
          National Technical University of Ukraine ``Kiev Polytechnical Institute'', \\
          Institute for Applied and System Analysis, \\
          Peremogy av., 37, build. 35, 03056 Kiev, Ukraine\\
          \email{kogut\_olga@bk.ru}
            \and
            R. Manzo \at
            Universit\`{a} degli Studi di Salerno, Dipartimento di Ingegneria dell'Informazione \\
           ed Elettrica e Matematica Applicata, \\
           Via Giovanni Paolo II, 132, 84084 Fisciano (SA), Italy\\
            \email{rmanzo@unisa.it}
}

\date{Received: date / Accepted: date}

\maketitle

\begin{abstract}
In this paper we consider an optimal control
problem (OCP) for the coupled system of a nonlinear monotone Dirichlet problem with anisotropic $p$-Laplacian and matrix-valued $L^\infty(\Omega,\mathbb{R}^{N\times N})$-controls in its coefficients
  and a nonlinear equation of Hammerstein type. Using the direct
method in calculus of variations, we prove the existence of an optimal control in considered problem and provide sensitivity analysis for a specific case of considered problem with respect to two-parameter regularization.
\keywords{Nonlinear elliptic equations \and Hammerstein equation \and control in coefficients \and $p(x)$-Laplacian \and approximation approach}
\subclass{47H30 \and 35B20 \and 35M12 \and 35J60 \and 49J20}
\end{abstract}

\section{Introduction}

The aim of this paper is  to prove the
existence result for an optimal control problem (OCP) governed by the system of a homogeneous Dirichlet nonlinear  elliptic boundary value problem, whose principle part is an ani\-so\-tro\-pic $p$-Laplace-like operator, and a nonlinear equation of Hammerstein type, and to provide sensitivity analysis for the specific case of considered optimization problem with respect to a two-parameter regularization.
As controls we consider the symmetric matrix of anisotropy in the main part of the elliptic equation. We assume that admissible controls are measurable and uniformly bounded matrices of $L^\infty(\Omega;\mathbb{R}^{N\times N})$.

Systems with distributed parameters and optimal control problems for systems described by PDE, nonlinear integral and ordinary differential equations
have been widely studied by many authors (see for example \cite{IvanMel,KuMa15,Lions0,Lurie,Zgurovski99}). However, systems which contain equations of different types and optimization problems associated with them are still less well understood. In general case including as well  control and state constraints, such problems are rather complex and have no simple constructive solutions. The system, considered in the present paper, contains two equations: a nonlinear elliptic equation with the so-called anisotropic $p$-Laplace operator with homogeneous Dirichlet boundary conditions and a nonlinear equation of Hammerstein type, which nonlinearly depends on the solution of the first object.  The optimal control problem we study here is to minimize the discrepancy between a given distribution $z_d\in L^p(\Omega)$ and a solution of Hammerstein equation $z=z(A,y)$, choosing an appropriate matrix of coefficients $A\in \mathfrak{A}_{ad}$, i.e.
\begin{equation}
\label{0.1} I(A,y,z)=\int_\Omega |z(x)-z_d(x)|^2\,dx \longrightarrow \inf
\end{equation}
subject to constrains
\begin{gather}
\label{0.2}
z + B F(y,z)=0\quad\mbox{ in }\Omega,\\
\label{0.3}
-\mathrm{div}\big(|(A(x)\nabla y,\nabla y)_{\mathbb{R}^N}|^{(p-2)/2}A(x)\nabla y\big)= f \text{ in }\Omega,\\
A\in \mathfrak{A}_{ad},\quad y=0\text{ on }\partial\Omega,
\label{0.4}
\end{gather}
where $B:L^q(\Omega)\to L^p(\Omega)$ is a positive linear operator, $F:W_0^{1,p}(\Omega)\times L^p(\Omega)\to L^q(\Omega)$ is a nonlinear operator, $f\in L^2(\Omega)$ is a given distribution, and a class of admissible controls $\mathfrak{A}_{ad}$ is a nonempty compact subset of $L^\infty(\Omega;\mathbb{R}^{\frac{N(N+1)}{2}})$.

The interest to equations whose principle part is an ani\-so\-tro\-pic $p$-Laplace-like operator arises from various applied contexts related to composite materials such as nonlinear dielectric composites, whose nonlinear behavior is modeled by the so-called power-low (see, for instance, \cite{BS,LK} and references therein). It is sufficient to say that anisotropic $p$-Laplacian $\Delta_p(A,y)$ has profound background both in the theory of a\-ni\-so\-tro\-pic and nonhomogeneous media and in Finsler or Minkowski geometry \cite{Xia12}. As a rule, the effect of anisotropy appears naturally in a wide class of geometry~--- Finsler geometry. A typical and important example of Finsler geometry is Minkowski geometry. In this case, anisotropic Laplacian is closely related to a convex hypersurface in $\mathbb{R}^N$, which is called the Wulff shape \cite{Xia11}. Since the topology of the Wulff shape essentially depends on the matrix of anisotropy $A(x)$, it is reasonable to take such matrix as a control.  From mathematical point of view, the interest of anisotropic $p$-Laplacian lies on its nonlinearity and an effect of degeneracy, which turns out to be the major difference from the standard Laplacian on $\mathbb{R}^N$.

In practice, the equations of Hammerstein type appear as integral or integ\-ro-differential equations. The class of integral equations is very important for theory and applications, since there are less restrictions on smoothness of the desired solutions involved in comparison to those for the solutions of differential equations. It should be also mentioned here, that well posedness or uniqueness of the solutions is not typical for equations of Hammerstein type or optimization problems associated with such objects (see \cite{AMJA}). Indeed, this property requires rather strong assumptions on operators $B$ and $F$, which is rather restrictive in view of numerous applications (see \cite{VainLav}). The physical motivation of optimal control problems which are similar to those investigated in the present paper is widely discussed in \cite{AMJA,ZMN}.

Using the direct
method of the Calculus of Variations, we show in Section  4 that the optimal control problem \eqref{0.1}--\eqref{0.4} has a nonempty set of solutions provided the admissible controls $A(x)$ are uniformly bounded in $BV$-norm, in spite of the fact that the corresponding quasilinear differential operator $-\mathrm{div} \big(|(A\nabla y,\nabla y)_{\mathbb{R}^N}|^{\frac{p-2}{2}}A\nabla y\big)$, in principle, has degeneracies as $|A^\frac{1}{2}\nabla y|$ tends to zero \cite{Alessand}. Moreover, when the term $|(A\nabla y,\nabla y)_{\mathbb{R}^N}|^{\frac{p-2}{2}}$ is regarded as the coefficient of the Laplace operator, we  have the case of unbounded coefficients (see \cite{HK1,Ko}). In order to avoid degeneracy with respect to the control $A(x)$, we assume that matrix $A(x)$ has a uniformly bounded spectrum away from zero. As for the optimal control problems in coefficients for degenerate elliptic equations and variational inequalities, we can refer to \cite{ButtazzoKogut,CUO09,CUO12,CUPR14,KoLe3,KM2,KuMa15}.

A number of regularizations have been suggested in the literature. See \cite{Roubicek} for a discussion for what has come to be known as $(\e,p)$-Laplace problem, such as $-\mathrm{div}((\e +|\nabla y|^2)^\frac{p-2}{2})\nabla y$. While the $(\e,p)$-Laplacian regularizes the degeneracy as the gradients tend to zero, the term $|\nabla y|^{p-2}$, viewed again as a coefficient, may grow large \cite{CasasFernandez1991}. Therefore, following ideas of \cite{CKL}, for the specific case of considered optimization problem we introduce  yet another regularization that leads to a sequence of monotone and bounded approximation $\mathcal{F}_k(|A^\frac{1}{2}\nabla y|^2)$ of $|A^\frac{1}{2}\nabla y|^2$. As a result, for fixed parameter $p\in [2,\infty)$ and control $A(x)$, we arrive at a two-parameter variational problem governed by operator $-\mathrm{div}((\e +\mathcal{F}_k(|A^\frac{1}{2}\nabla y|^2))^\frac{p-2}{2})A\nabla y$ and a two-parameter Hammerstein equation with non-linear kernel $F_{\e,k}(y,z)=(\e +\mathcal{F}_k(y^2))^\frac{p-2}{2}y+(\e +\mathcal{F}_k(z^2))^\frac{p-2}{2}z$.  Finally, we deal with a two-parameter family of optimal control problems in the coefficients for a system of elliptic boundary value problem and equation of Hammerstein type. We consequently provide the well-posedness analysis for the perturbed optimal control problems in Sections 5. In section 6, we show  that  the solutions
of two-parametric family of perturbed optimal control problems
can be considered as appropriate approximations to optimal pairs for the original problem similar to \eqref{0.1}--\eqref{0.4}. To the end, we note that the approximation and regularization are not only considered to be useful for the mathematical analysis, but also for the purpose of numerical simulations. The numerical analysis as well as the case of degenerating controls are subjects to future publications.
\section{Notation and preliminaries}
\label{Sec 1}
Let $\Omega$ be a bounded open subset of $\mathbb{R}^N$ ($N\ge1$) with a Lipschitz boundary.
Let $p$ be a real number such that $2\le p<\infty$, and let $q=p/(p-1)$ be the conjugate of $p$.
Let $\mathbb{S}^N:=\mathbb{R}^{\frac{N(N+1)}{2}}$ be the set of all
symmetric matrices $A=[a_{ij}]_{i,j=1}^N$,
($a_{ij}=a_{ji}\in \mathbb{R}$). We suppose that $\mathbb{S}^N$ is endowed
with the Euclidian scalar product $A\cdot B=
\mathrm{tr}(A\,B)= a_{ij}b_{ij}$  and with the
corresponding Euclidian norm
$\|A\|_{\mathbb{S}^N}=(A\cdot A)^{1/2}$. We also
make use of the so-called spectral norm $\|A\|_2:=\sup\big\{|A\xi| \ :\ \xi\in \mathbb{R}^N\ \text{ with }\ |\xi| =1\big\}$ of matrices $A\in \mathbb{S}^N$, which is different from the Euclidean norm $\|A\|_{\mathbb{S}^N}$.
However, the relation $\|A\|_2\le \|A\|_{\mathbb{S}^N}\le\sqrt{N} \|A\|_2$ holds true for all $A\in \mathbb{S}^N$.

Let
$L^{1}(\Omega)^{\frac{N(N+1)}{2}}=L^{1}\big(\Omega;\mathbb{S}^N\big)$
be the space of integrable functions whose values are symmetric matrices.
By $BV(\Omega;\mathbb{S}^N)$ we denote the space of all matrices in $L^1(\Omega;\mathbb{S}^N)$ for which the norm
\begin{multline}\label{1.0}
\|A\|_{BV(\Omega;\mathbb{S}^N)} =\|A\|_{L^1(\Omega;\mathbb{S}^N)}+\int_\Omega|D A|= \|A\|_{L^1(\Omega;\mathbb{S}^N)}\\
 + \sum_{1\le i\le j\le N}\sup\Big\{\int_\Omega a_{ij}\,\mathrm{div}\varphi\,dx\ :\ \varphi\in C^1_0(\Omega;\mathbb{R}^N),\ |\varphi(x)|\le 1\ \text{for}\ x\in \Omega\Big\}
\end{multline}
is finite.

\textit{Weak Compactness Criterion in $L^1(\Omega)$.}
Throughout the paper we will often use the concept of  weak and
strong convergence in $L^1(\Omega)$. Let
$\left\{a_\e\right\}_{\e>0}$ be a bounded sequence of functions in
$L^1(\Omega)$. We recall that
$\left\{a_\e\right\}_{\e>0}$ is called equi-integrable on $\Omega$,
if for any $\delta>0$ there is a $\tau=\tau(\delta)$ such that
$\int_S \|a_\e\|\,dx<\delta$ for every measurable
subset $S\subset\Omega$ of Lebesgue measure $|S|<\tau$. Then the
following assertions are equivalent for
$L^1(\Omega)$-bounded sequences:
\begin{enumerate}
\item[(i)] a sequence $\left\{a_k\right\}_{k\in \mathbb{N}}$ is weakly compact in $L^1(\Omega)$;
\item[(ii)] the sequence $\left\{a_k\right\}_{k\in \mathbb{N}}$ is equi-integrable.
\end{enumerate}

\begin{lemma}[Lebesgue's Theorem]
\label{Th_1.9} If a sequence  $\left\{a_k\right\}_{k\in
\mathbb{N}}\subset L^1(\Omega)$ is equi-integ\-rable
and $a_k\rightarrow a$ almost everywhere in $\Omega$ then
$a_k\rightarrow a$ in $L^1(\Omega)$.
\end{lemma}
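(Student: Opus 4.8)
The plan is to run the standard Vitali-type argument, combining the equi-integrability hypothesis with Egorov's theorem, using crucially that $\Omega$ is bounded and hence of finite Lebesgue measure. First I would verify that the limit $a$ is itself integrable: since $a_k\rightarrow a$ almost everywhere and any equi-integrable family is $L^1(\Omega)$-bounded, Fatou's lemma gives $\int_\Omega|a|\,dx\le\liminf_k\int_\Omega|a_k|\,dx<\infty$, so $a\in L^1(\Omega)$. The same Fatou argument, applied now on an arbitrary measurable subset $S\subset\Omega$, transfers the equi-integrability to the limit: for the threshold $\tau(\delta)$ furnished by equi-integrability of $\{a_k\}$ one obtains $\int_S|a|\,dx\le\liminf_k\int_S|a_k|\,dx\le\delta$ whenever $|S|<\tau(\delta)$.

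Next, fix $\varepsilon>0$ and use equi-integrability to choose $\tau>0$ so that $\int_S|a_k|\,dx<\varepsilon/4$ for every $k\in\mathbb{N}$ and every measurable $S$ with $|S|<\tau$; by the previous step the companion bound $\int_S|a|\,dx\le\varepsilon/4$ holds on the same sets. Since $|\Omega|<\infty$ and $a_k\rightarrow a$ almost everywhere, Egorov's theorem supplies a measurable set $E\subset\Omega$ with $|\Omega\setminus E|<\tau$ on which the convergence $a_k\rightarrow a$ is uniform.

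Finally I would split $\int_\Omega|a_k-a|\,dx=\int_E|a_k-a|\,dx+\int_{\Omega\setminus E}|a_k-a|\,dx$. On $E$ uniform convergence gives $\int_E|a_k-a|\,dx\le|E|\,\sup_E|a_k-a|\rightarrow 0$, so this term is below $\varepsilon/2$ for all large $k$. On the complement the triangle inequality together with the two small-set bounds yields $\int_{\Omega\setminus E}|a_k-a|\,dx\le\int_{\Omega\setminus E}|a_k|\,dx+\int_{\Omega\setminus E}|a|\,dx\le\varepsilon/2$ for every $k$. Adding these, $\int_\Omega|a_k-a|\,dx<\varepsilon$ for all large $k$, which is exactly the asserted $L^1(\Omega)$-convergence.

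The whole argument hinges on controlling the \emph{bad region} where convergence is merely pointwise and possibly nonuniform. The single genuine input is Egorov's theorem, whose finite-measure hypothesis is precisely what boundedness of $\Omega$ provides, while equi-integrability is exactly the property preventing mass from concentrating on the small set $\Omega\setminus E$; the interplay of these two facts is the heart of the proof, and everything else is routine $\varepsilon/2$-splitting. I therefore expect no real obstacle beyond correctly invoking Egorov and making sure the equi-integrability bound is legitimately inherited by $a$ via Fatou.
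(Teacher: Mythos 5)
Your proof is correct, but there is nothing in the paper to compare it against: the paper states this lemma (essentially Vitali's convergence theorem) as a known preliminary fact in Section 2 and gives no proof of it. Your argument is the standard one and is sound: Fatou's lemma legitimately transfers both integrability and the small-set bound $\int_S|a|\,dx\le\delta$ to the limit $a$; Egorov's theorem applies because $\Omega$ is a bounded domain and hence has finite Lebesgue measure; and the $\varepsilon/4$--$\varepsilon/2$ bookkeeping on $E$ and $\Omega\setminus E$ closes correctly, using $|E|\le|\Omega|<\infty$ for the uniform-convergence term. The only step you assert without justification is that an equi-integrable family is automatically bounded in $L^1(\Omega)$. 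In the paper's setting this is harmless, since the paper introduces equi-integrability only for sequences already assumed bounded in $L^1(\Omega)$; and in any case the claim is true on a finite-measure domain: taking $\delta=1$ in the definition of equi-integrability yields a threshold $\tau(1)>0$, and covering $\Omega$ by finitely many measurable sets, each of measure less than $\tau(1)$, gives a uniform bound on $\int_\Omega|a_k|\,dx$ independent of $k$. With that one line added, your argument is complete and self-contained, which is arguably more than the paper itself provides.
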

\begin{lemma}[\cite{Zh2010}]\label{Gikov}
If a sequence $\{\varphi_k\}_{k\in \mathbb{N}}$ is bounded in $L^1(\Omega)$, $\varphi_k\to 0$ a.e. in $\Omega$ and $\{g_k\}_{k\in \mathbb{N}}$ is equi-integrable, then $\varphi_k\cdot g_k\to 0$ strongly in $L^1(\Omega)$.
\end{lemma}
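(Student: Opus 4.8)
The plan is to prove the stronger quantitative statement $\int_\Omega|\varphi_k g_k|\,dx\to0$ by separating, at a level $\lambda>0$, the region where $\varphi_k$ is already small from the region where it is still large, and by showing that the latter region is negligible for the mass of $g_k$. As a preliminary I would record that, $\Omega$ having finite measure, the equi-integrability of $\{g_k\}$ yields a uniform bound $C:=\sup_k\|g_k\|_{L^1(\Omega)}<\infty$: picking the threshold $\tau(1)$ from the definition and covering $\Omega$ by finitely many measurable pieces of measure below $\tau(1)$ bounds $\|g_k\|_{L^1(\Omega)}$ uniformly in $k$.

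First I would fix $\lambda>0$ and split
\[
\int_\Omega|\varphi_k g_k|\,dx=\int_{\{|\varphi_k|\le\lambda\}}|\varphi_k g_k|\,dx+\int_{\{|\varphi_k|>\lambda\}}|\varphi_k g_k|\,dx\le\lambda\,C+\int_{\{|\varphi_k|>\lambda\}}|\varphi_k g_k|\,dx,
\]
so that the contribution of the small region is at most $\lambda C$, uniformly in $k$, by the bound $C$ on the $L^1$-mass of $g_k$. For the large region I would use that $\varphi_k\to0$ a.e. on the finite-measure set $\Omega$ forces $\one_{\{|\varphi_k|>\lambda\}}\to0$ a.e., whence by dominated convergence $|\{|\varphi_k|>\lambda\}|\to0$ as $k\to\infty$ for each fixed $\lambda$.

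The decisive step, and the one I expect to be the main obstacle, is the estimate of $\int_{\{|\varphi_k|>\lambda\}}|\varphi_k g_k|\,dx$: given $\delta>0$ and the corresponding equi-integrability threshold $\tau(\delta)$, for all large $k$ one has $|\{|\varphi_k|>\lambda\}|<\tau(\delta)$ and hence $\int_{\{|\varphi_k|>\lambda\}}|g_k|\,dx<\delta$; but turning this into smallness of the product requires $\varphi_k$ to remain controlled on that set, where its pointwise smallness has been lost. This is exactly the point at which the boundedness of $\{\varphi_k\}$ must enter as a uniform constant $\Lambda:=\sup_k\|\varphi_k\|_{L^\infty(\Omega)}$, giving $\int_{\{|\varphi_k|>\lambda\}}|\varphi_k g_k|\,dx\le\Lambda\,\delta$ for all large $k$. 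Combining the two bounds yields $\limsup_k\int_\Omega|\varphi_k g_k|\,dx\le\lambda C+\Lambda\delta$; letting first $\delta\to0$ and then $\lambda\to0$ gives $\int_\Omega|\varphi_k g_k|\,dx\to0$, i.e. the strong convergence $\varphi_k g_k\to0$ in $L^1(\Omega)$. The only delicate ingredient is thus the uniform control of $\{\varphi_k\}$ on the large region; the uniform $L^1$-bound for $\{g_k\}$, the vanishing of $|\{|\varphi_k|>\lambda\}|$, and the small-region estimate are all routine.
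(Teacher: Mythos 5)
Your argument is the standard one, and, as you yourself flag, it hinges on the constant $\Lambda:=\sup_k\|\varphi_k\|_{L^\infty(\Omega)}$ being finite. That is exactly where it fails as a proof of the lemma \emph{as printed}: the hypothesis gives boundedness of $\{\varphi_k\}$ in $L^1(\Omega)$, not in $L^\infty(\Omega)$, so $\Lambda$ may be $+\infty$ and the estimate $\int_{\{|\varphi_k|>\lambda\}}|\varphi_k g_k|\,dx\le\Lambda\,\delta$ has no content. Nothing in an $L^1$ bound controls $\varphi_k$ pointwise on the exceptional set, and this gap cannot be repaired, because the statement with the $L^1$ hypothesis is false: take $\Omega=(0,1)$, $\varphi_k=k\,\one_{(0,1/k)}$, $g_k\equiv 1$; then $\|\varphi_k\|_{L^1(\Omega)}=1$ for all $k$, $\varphi_k\to 0$ a.e., $\{g_k\}$ is trivially equi-integrable, and yet $\|\varphi_k g_k\|_{L^1(\Omega)}=1$ for every $k$. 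So any purported proof of the printed statement must break somewhere, and yours breaks honestly at the step you yourself call decisive.

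The resolution is that in Zhikov's original lemma the hypothesis is uniform (i.e.\ $L^\infty$) boundedness of $\{\varphi_k\}$; the $L^1$ in the statement here should be read as a misprint, and since the paper cites \cite{Zh2010} and supplies no proof of its own, there is no internal argument to compare against. Under the $L^\infty$ hypothesis your proof is complete and correct: the derivation of $C=\sup_k\|g_k\|_{L^1(\Omega)}$ from equi-integrability by covering the finite-measure set $\Omega$ with finitely many pieces of measure below $\tau(1)$ is sound (and consistent with the paper's convention that equi-integrable sequences are $L^1$-bounded); the passage $|\{|\varphi_k|>\lambda\}|\to 0$ via a.e.\ convergence and dominated convergence on a finite measure space is sound; and the two-parameter bound $\limsup_k\int_\Omega|\varphi_k g_k|\,dx\le\lambda C+\Lambda\delta$ closes the argument. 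Your level-set splitting at $\{|\varphi_k|\le\lambda\}$ is a slightly more elementary substitute for the usual Egorov-based proof and equally valid; just be explicit that you are proving the corrected statement, since with only the stated $L^1$ bound you are proving a different lemma from the one asserted. (The distinction is not academic: the paper later invokes this lemma, in the proof of Proposition \ref{prop 4.1}, with a sequence $\varphi_n=|y_n-y_0|^{p/(p-1)}$ that is only $L^1$-bounded, so the precise hypothesis matters there as well.)
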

\begin{lemma}[\cite{Zh2010}]\label{Gikov1}
Let $B_n(x,\xi)$ and $B(x,\xi)$ be Caratheodory vector
functions acting from $\Omega\times \mathbb{R}$  to $\mathbb{R}$. These vector
functions are assumed to satisfy the  monotonicity and pointwise convergence conditions
\begin{gather*}
(B_n(x,\xi)-B_n(x,\eta))(\xi-\eta)\ge 0,\quad B_n(x,0)\equiv 0,\\
|B_n(x,\xi)|\le c_0(|\xi|)<\infty;\quad \lim_{n\to \infty} B_n(x,\xi)=B(x,\xi),
\end{gather*}
for a.e. $x\in\Omega$ and every $\xi\in \mathbb{R}$. If $v_n\rightharpoonup v$ in $L^p(\Omega)$, $B_n(x,v_n)\rightharpoonup z$ in $L^q(\Omega)$, then
\begin{equation}
\label{A.1}
\liminf_{n\to\infty} \langle B_n(x,v_n),v_n\rangle_{L^q(\Omega);L^p(\Omega)}\ge \langle z,v\rangle_{L^q(\Omega);L^p(\Omega)},
\end{equation}
and in the case of equality in \eqref{A.1}, we have  $z=B(x,v)$.
\end{lemma}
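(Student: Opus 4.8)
The plan is to argue by a Minty--Browder type monotonicity device; both assertions of the lemma rest on a single preliminary fact. First I would show that for every \emph{fixed} $w\in L^p(\Omega)$ one has the strong convergence $B_n(\cdot,w(\cdot))\to B(\cdot,w(\cdot))$ in $L^q(\Omega)$. Indeed, $B_n(x,w(x))\to B(x,w(x))$ for a.e.\ $x\in\Omega$ by the pointwise convergence hypothesis, and the uniform bound $|B_n(x,w(x))|\le c_0(|w(x)|)$ provides a fixed majorant; applying Lemma~\ref{Th_1.9} to $|B_n(\cdot,w)-B(\cdot,w)|^q$ (which is equi-integrable and tends to $0$ a.e.) yields the claim. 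Passing to the limit in the bound also gives $|B(x,\xi)|\le c_0(|\xi|)$, so $B(\cdot,w)\in L^q(\Omega)$ and every duality pairing below is well defined.

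To obtain \eqref{A.1}, I would use monotonicity with $\eta=v$, namely
\begin{equation*}
\langle B_n(x,v_n)-B_n(x,v),\,v_n-v\rangle_{L^q(\Omega);L^p(\Omega)}\ge 0,
\end{equation*}
and rewrite it as
\begin{equation*}
\langle B_n(x,v_n),v_n\rangle\ge\langle B_n(x,v_n),v\rangle+\langle B_n(x,v),v_n-v\rangle .
\end{equation*}
Then I would pass to the limit: $\langle B_n(x,v_n),v\rangle\to\langle z,v\rangle$ because $B_n(x,v_n)\rightharpoonup z$ in $L^q(\Omega)$ and $v$ is fixed, while $\langle B_n(x,v),v_n-v\rangle\to 0$ since $B_n(x,v)\to B(x,v)$ strongly in $L^q(\Omega)$ and $v_n-v\rightharpoonup 0$ in $L^p(\Omega)$ (strong times weak). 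Taking $\liminf$ gives \eqref{A.1}.

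For the equality case I would run the classical Minty trick. Assuming $\lim_n\langle B_n(x,v_n),v_n\rangle=\langle z,v\rangle$, I apply monotonicity with the perturbed test function $\eta=v+t\psi$, $t>0$, $\psi\in L^p(\Omega)$, to get
\begin{equation*}
\langle B_n(x,v_n),v_n\rangle\ge\langle B_n(x,v_n),v+t\psi\rangle+\langle B_n(x,v+t\psi),v_n-v-t\psi\rangle .
\end{equation*}
Letting $n\to\infty$ (the left side tends to $\langle z,v\rangle$ by assumption, and $B_n(\cdot,v+t\psi)\to B(\cdot,v+t\psi)$ strongly in $L^q$), then cancelling $\langle z,v\rangle$ and dividing by $t>0$, I obtain $\langle B(x,v+t\psi),\psi\rangle\ge\langle z,\psi\rangle$. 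Now sending $t\to0^+$, and using the Caratheodory continuity of $B(x,\cdot)$ together with dominated convergence to get $B(\cdot,v+t\psi)\to B(\cdot,v)$ in $L^q(\Omega)$, I arrive at $\langle B(x,v),\psi\rangle\ge\langle z,\psi\rangle$. Replacing $\psi$ by $-\psi$ reverses the inequality, so $\langle B(x,v)-z,\psi\rangle=0$ for every $\psi\in L^p(\Omega)$, i.e.\ $z=B(x,v)$.

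The main obstacle is the strong $L^q$-convergence $B_n(\cdot,w)\to B(\cdot,w)$ that underpins every limit passage: it requires the majorant $c_0(|w|)$ to be genuinely $q$-integrable for $w\in L^p(\Omega)$, which amounts to an implicit growth restriction on $c_0$ of the natural type $c_0(s)\lesssim 1+s^{p-1}$ (so that $c_0(|w|)^q\lesssim 1+|w|^p\in L^1(\Omega)$). The only other point that needs care is the order of the two limits in the equality case---one must send $n\to\infty$ first and $t\to0^+$ afterwards---and the sign bookkeeping when passing from the one-sided inequality to the identity $z=B(x,v)$.
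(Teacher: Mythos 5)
The paper offers no proof to compare against: Lemma~\ref{Gikov1} is imported verbatim from Zhikov's paper \cite{Zh2010}, so your attempt must be judged on its own merits. Your Minty--Browder scheme is the natural one and the algebra in both limit passages is correct, but the proof does not establish the lemma \emph{as stated}, and the defect is exactly the one you flag in your last paragraph and then wave through as an ``implicit growth restriction''. There is no such restriction in the hypotheses: $c_0$ is only required to be finite (i.e.\ locally bounded), and the lemma is true in that generality. Without $c_0(s)\lesssim 1+s^{p-1}$ the Nemytskii map $w\mapsto B_n(\cdot,w)$ need not send $L^p(\Omega)$ into $L^q(\Omega)$, so already your first step --- testing monotonicity with $\eta=v$ and invoking $B_n(\cdot,v)\to B(\cdot,v)$ strongly in $L^q(\Omega)$ --- can be meaningless: $B_n(\cdot,v)$ need not belong to $L^q(\Omega)$ at all, and the same objection hits $\eta=v+t\psi$ in the equality case. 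The missing idea is precisely how to run Minty's trick using only \emph{bounded} test functions.

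That is what the hypothesis $B_n(x,0)\equiv 0$ --- which your argument never uses, a telltale sign --- is for. For $w\in L^\infty(\Omega)$ the majorant $c_0(|w|)\le \sup_{s\le \|w\|_{\infty}} c_0(s)$ is a constant, so $B_n(\cdot,w)\to B(\cdot,w)$ in $L^q(\Omega)$ by dominated convergence, and your expansion of monotonicity (with $\eta=w$) legitimately yields
\begin{equation*}
\liminf_{n\to\infty}\langle B_n(x,v_n),v_n\rangle_{L^q(\Omega);L^p(\Omega)}\;\ge\;\langle z,w\rangle_{L^q(\Omega);L^p(\Omega)}+\langle B(x,w),v-w\rangle_{L^q(\Omega);L^p(\Omega)},\quad \forall\,w\in L^\infty(\Omega).
\end{equation*}
Now take $w=T_M(v):=\max\{-M,\min\{v,M\}\}$. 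Monotonicity together with $B(x,0)=0$ gives $B(x,T_M(v))\,(v-T_M(v))\ge 0$ a.e.\ (both factors vanish on $\{|v|\le M\}$ and carry the sign of $v$ on $\{|v|>M\}$), so the right-hand side above is at least $\langle z,T_M(v)\rangle_{L^q(\Omega);L^p(\Omega)}$, and letting $M\to\infty$ proves \eqref{A.1} with no growth condition whatsoever. For the equality case, pass to a subsequence realizing the lower limit and integrate the pointwise monotonicity inequality over an arbitrary measurable set $E\subset\Omega$: the same truncation argument localizes \eqref{A.1} to $E$, and since the nonnegative functions $B_n(x,v_n)v_n$ have total masses converging to $\int_\Omega zv\,dx$, one gets $\lim_n\int_E B_n(x,v_n)v_n\,dx=\int_E zv\,dx$ for every $E$. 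Expanding monotonicity over $E$ then gives $\int_E (z-B(x,w))(v-w)\,dx\ge 0$ for every $E$ and every bounded $w$; taking $w$ constant rational and letting $E$ shrink, $(z(x)-B(x,\lambda))(v(x)-\lambda)\ge 0$ a.e.\ for all $\lambda\in\mathbb{Q}$, hence for all $\lambda\in\mathbb{R}$ by continuity of $B(x,\cdot)$, and the scalar Minty argument $\lambda=v(x)\pm t$, $t\to 0^+$, yields $z=B(x,v)$ pointwise. Your order-of-limits remark ($n\to\infty$ before $t\to 0^+$) addresses the easy difficulty; the real one is the passage from bounded test functions to $v$ itself.
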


\textit{Admissible controls.}
Let $\xi_1$, $\xi_2$ be given elements of $L^\infty(\Omega)\cap BV(\Omega)$ satisfying the conditions
\begin{gather}\label{1.1}
0<\alpha\le\xi_1(x)\le\xi_2(x)\text{ a.e. in }\Omega,
\end{gather}
where $\alpha$ is a given positive value.

We define the class of admissible controls $\mathfrak{A}_{ad}$ as follows
\begin{equation}
\label{1.2}
\mathfrak{A}_{ad}=\left\{A\in L^\infty(\Omega;\mathbb{S}^N) \left|
\begin{array}{c}
\xi^2_1|\eta|^2 \le (\eta, A\eta)_{\mathbb{R}^N} \le \xi^2_2 |\eta|^2\,
\text{a.e.}\,\mbox {in}\,\Omega,\,  \forall\, \eta\in \mathbb{R}^N,\\[1ex]
A^\frac{1}{2}\in BV(\Omega;\mathbb{S}^N ),\; \int_\Omega |D A^\frac{1}{2}|\le\gamma,
\end{array}
\right.\right\}
\end{equation}
where $\gamma>0$ is a given constant.
In view of estimate
\begin{equation*}
\|A^\frac{1}{2}(x)\|_{\mathbb{S}^N}\le \sqrt{N}\, \|A^\frac{1}{2}(x)\|_2\le \sqrt{N}\, \xi_2(x)\ \text{ a.e. in }\Omega,
\end{equation*}
it is clear that $\mathfrak{A}_{ad}$ is a nonempty convex subset of $L^\infty(\Omega;\mathbb{S}^N)$.

\textit{Anisotropic Laplace operator.}
Let us consider now  the nonlinear operator $\mathcal{A}(A,y):L^\infty(\Omega;\mathbb{S}^N)\times W_0^{1,p}(\Omega)\rightarrow W^{-1,q}(\Omega)$ defined as
$$\mathcal{A}(A,y)=-\mathrm{div}\big(|(A\nabla y,\nabla y)_{\mathbb{R}^N} |^{\frac{p-2}{2}}A\nabla y\big)$$
or via the pairing
\begin{multline}\label{1.3}
\langle \mathcal{A}(A,y),v\rangle_{W^{-1,q}(\Omega);W^{1,p}_0(\Omega)}:= \int_\Omega |(A\nabla y,\nabla y)_{\mathbb{R}^N} |^{\frac{p-2}{2}}\left(A\nabla y,\nabla v\right)_{\mathbb{R}^N} \,dx\\
=\int_\Omega |A^{\frac{1}{2}}\nabla y|^{p-2}\left(A\nabla y,\nabla v\right)_{\mathbb{R}^N} \,dx,\quad\forall v\in W_0^{1,p}(\Omega).
\end{multline}
\begin{definition}
\label{Def 1.1} We say that a function $y=y(A,f)$ is  a weak
solution (in the sense of Minty) to boundary value problem
\begin{gather}\label{1.7}
-\mathrm{div}\big(|(A\nabla y,\nabla y)_{\mathbb{R}^N} |^{\frac{p-2}{2}}A\nabla y\big) =f\text{ in }\Omega,\\\label{1.8}
A\in\mathfrak{A}_{ad},\; y\in W_0^{1,p}(\Omega),
\end{gather}
for
a fixed control $A\in \mathfrak{A}_{ad}$ and given function $f\in
L^2(\Omega)$ if
the inequality
\begin{equation}
\label{1.6*}
\int_\Omega |A^{1/2}\nabla \varphi|_{\mathbb{R}^N}^{p-2}(A\nabla\varphi, \nabla\varphi-\nabla y)_{\mathbb{R}^N}\,dx\ge\int_\Omega f(\varphi-y)\,dx
\end{equation}
holds for any $\varphi\in C^\infty_0(\Omega)$.
\end{definition}
\begin{remark}
\label{Rem 1.1}
Another definition of the weak solution to the considered boundary value problem appears more natural:
\begin{equation*}
\int_\Omega |A^{1/2}\nabla y|_{\mathbb{R}^N}^{p-2}(A\nabla y,\nabla \varphi)_{\mathbb{R}^N}\,dx=\int_\Omega f\varphi\,dx\;\forall\,\varphi\in C_0^\infty(\Omega).
\end{equation*}
However, both concepts for the weak solutions coincide (see, for instance, \cite{Past}).
\end{remark}

Let us show that for each $A\in \mathfrak{A}_{ad}$ operator $\mathcal{A}(\cdot)=\mathcal{A}(A,\cdot):W^{1,p}_0(\Omega)\to W^{-1,q}(\Omega)$ is strictly monotone, coercive and semi-continuous, where the above mentioned properties  have respectively the following meaning:
\begin{gather}
\label{1.4}
\left<\mathcal{A}y-\mathcal{A}v,y-v\right>_{W^{-1,q}(\Omega);W^{1,p}_0(\Omega)}\ge 0,\quad\forall\,y,v\in W_0^{1,p}(\Omega);\\
\label{1.4a}
\left<\mathcal{A}y-\mathcal{A}v,y-v\right>_{W^{-1,q}(\Omega);W^{1,p}_0(\Omega)}=0\ \Longrightarrow \ y=v;\\
\label{1.5}
\left<\mathcal{A}y,y\right>_{W^{-1,q}(\Omega);W^{1,p}_0(\Omega)}\to +\infty\text{ provided } \|y\|_{W_0^{1,p}(\Omega)\to\infty};\\
\label{1.6}
\mathbb{R}\ni t\mapsto \left<\mathcal{A}(y+tv),w\right>_{W^{-1,q}(\Omega);W^{1,p}_0(\Omega)}\text{ is continuous }\forall\,y,v,w\in W_0^{1,p}(\Omega).
\end{gather}
Indeed, the right-hand side of \eqref{1.3} is continuous with respect to $v\in W^{1,p}_0(\Omega)$ and, therefore, represents an element of $W^{-1,q}(\Omega)$ because
\begin{gather*}
\int_\Omega |A^{\frac{1}{2}}\nabla y|^{p-2}\left(A\nabla y,\nabla v\right)_{\mathbb{R}^N}dx \le
\left(\int_\Omega |A^{\frac{1}{2}}\nabla y|^{p}dx \right)^{\frac{p-1}{p}}
\left(\int_\Omega |A^{\frac{1}{2}}\nabla v|^{p}dx \right)^{\frac{1}{p}}\\
\le \|\xi_2\|^{p}_{L^\infty(\Omega)} \|\nabla y\|^{p-1}_{L^p(\Omega)^N}\|\nabla v\|_{L^p(\Omega)^N}=
\|\xi_2\|^{p}_{L^\infty(\Omega)} \|y\|^{p-1}_{W^{1,p}_0(\Omega)}\| v\|_{W^{1,p}_0(\Omega)}
\end{gather*}
(we apply here the H\"{o}lder's inequality and  estimate
$|A^{\frac{1}{2}}\nabla \varphi|^p\le \xi_2^{p} |\nabla \varphi|^p$
coming from the condition $A\in \mathfrak{A}_{ad}$). Hence, for each $A\in \mathfrak{A}_{ad}$  the operator
$\mathcal{A}(A,\cdot):W^{1,p}_0(\Omega)\to W^{-1,q}(\Omega)$ is bounded.
The coercivity property of $\mathcal{A}$ we get immediately, since
\begin{equation*}
\left<\mathcal{A}y,y\right>_{W^{-1,q}(\Omega);W^{1,p}_0(\Omega)}\ge \alpha^{p}\|y\|^p_{W_0^{1,p}(\Omega)}.
\end{equation*}
As for the proof of the strict monotonicity and semicontinuity of the operator $\mathcal{A}$, we refer
for the details to \cite{Lions69,Roubicek}).
Then, by well known existence results for non-linear elliptic equations with coercive, semi-continuous, strictly monotone operators, the Dirichlet boundary value problem \eqref{1.7}--\eqref{1.8}
admits a unique weak solution for every fixed control matrix $A\in \mathfrak{A}_{ad}$ and every distribution $f\in L^2(\Omega)$.

\textit{On equations of Hammerstein type.}
Let $Y$ and $Z$ be Banach spaces, let $Y_0\subset Y$ be an arbitrary bounded set, and let $Z^\ast$ be the dual space to $Z$. To begin with we recall some useful properties of non-linear operators, concerning the solvability problem for  Hammerstein type equations and systems.
\begin{definition}\label{Def.1}
We say that the operator $G:D(G)\subset Z\to Z^\ast$ is radially continuous if for any $z_1,z_2\in X$ there exists $\e>0$ such that $z_1+\tau z_2\in D(G)$ for all $\tau\in [0,\e]$ and a real-valued function $[0,\e]\ni \tau\to\langle G(z_1+\tau z_2),z_2\rangle_{Z^\ast;Z}$ is continuous.
\end{definition}
\begin{definition}
\label{Def.2}
An operator $G:Y\times Z\to Z^\ast$ is said to have a uniformly semi-bounded variation (u.s.b.v.) if for any bounded set  $Y_0\subset Y$
and any elements $z_1,z_2\in D(G)$ such that $\|z_i\|_Z\leq R$, $i=1,2$, the following inequality
\begin{equation}
\label{1.9.1}
\langle G(y,z_1)-G(y,z_2), z_1-z_2\rangle_{Z^\ast;Z}\ge -\inf_{y\in Y_0}{C_{y}(R;\||z_1-z_2\||_Z)}
\end{equation}
holds true provided  the function $C_{y}:\mathbb{R}_+\times\mathbb{R}_+\to \mathbb{R}$ is continuous for each element $y\in Y_0$, and
$\ds\frac{1}{t}C_{y}(r,t)\to 0$ as $t\to 0$, $\forall\, r>0$. Here, $\||\cdot\||_Z$ is a seminorm on $Z$ such that $\||\cdot\||_Z$ is compact with respect to the norm $\|\cdot\|_Z$.
\end{definition}

It is worth to note that Definition \ref{Def.2} gives in fact a certain generalization of the classical monotonicity property. Indeed, if $C_{y}(\rho,r)\equiv 0$, then \eqref{1.9.1} implies the monotonicity property for the operator $G$ with respect to the second argument.
\begin{remark}\label{Rem 1.5}
Each operator $G:Y\times Z\to Z^\ast$ with u.s.b.v. possesses the following property (see for comparison Remark 1.1.2 in  \cite{AMJA}): if a set $K\subset Z$ is such that $\|z\|_{Z}\le k_1$ and $\langle G(y,z),z\rangle_{Z^\ast;Z}\le k_2$ for all $z\in K$ and $y\in Y_0$, then there exists a constant $C>0$ such that $\|G(y,z)\|_{Z^\ast}\le C$, $\forall\,z\in K$ and $\forall y\in Y_0$.
\end{remark}

Let $B:Z^\ast\to Z$ and $F:Y\times Z\to Z^\ast$ be given operators such that the mapping $Z^\ast\ni z^\ast\mapsto B(z^\ast)\in Z$ is linear. Let $g\in Z$ be a given distribution. Then a typical operator equation of Hammerstein type can be represented as follows
\begin{equation}
\label{1.9.2}
z+B F(y,z)=g.
\end{equation}
The following existence result is well-known (see \cite[Theorem 1.2.1]{AMJA}).
\begin{theorem}\label{Th 1.1*}
Let $B:Z^\ast\to Z$ be a linear continuous positive operator such that there exists a right inverse operator $B^{-1}_{r}:Z\to Z^\ast$. Let $F:Y\times Z\to Z^\ast$ be an operator with u.s.b.v such that
$F(y,\cdot):Z\to Z^\ast$ is radially continuous for each $y\in Y_0$ and the following inequality holds true
$$
\langle F(y,z)-B^{-1}_{r}g,z\rangle_{Z^\ast;Z}\ge 0\quad\mbox{ if only } \|z\|_{Z}>\lambda>0,\;\lambda = const.
$$
Then the set
$$
\mathcal{H}(y)=\{z\in Z:\;z+BF(y,z)=g\ \text{in the sense of distributions }\}
$$
is non-empty and weakly compact for every fixed $y\in Y_0$ and $g\in Z$.
\end{theorem}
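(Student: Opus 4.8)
The plan is to convert the Hammerstein equation into a single operator equation governed by an operator of semi-bounded variation and to solve it by a Galerkin scheme, handling nonemptiness and weak compactness separately. The starting point is the observation that, since $B$ possesses a right inverse $B^{-1}_{r}:Z\to Z^\ast$ with $BB^{-1}_{r}=\mathrm{id}_Z$ and $B$ is positive, the operator $B^{-1}_{r}$ is itself monotone: for every $u\in Z$ one has $\langle B^{-1}_{r}u,u\rangle_{Z^\ast;Z}=\langle B^{-1}_{r}u,BB^{-1}_{r}u\rangle_{Z^\ast;Z}\ge 0$. Hence, for fixed $y\in Y_0$, I would introduce $\mathcal{G}(z):=B^{-1}_{r}z+F(y,z):Z\to Z^\ast$, which is radially continuous and, being the sum of a monotone operator and an operator with u.s.b.v., again has u.s.b.v. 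The key point is that any solution of $\mathcal{G}(z)=B^{-1}_{r}g$ solves the Hammerstein equation: applying $B$ and using $BB^{-1}_{r}=\mathrm{id}_Z$ gives $z+BF(y,z)=g$. Throughout I work, as in the application $Z=L^p(\Omega)$, under reflexivity and separability of $Z$, so that bounded sets are weakly precompact.

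For nonemptiness I would run a Galerkin approximation $z_n\in Z_n:=\mathrm{span}\{e_1,\dots,e_n\}$ solving $\langle \mathcal{G}(z_n)-B^{-1}_{r}g,v\rangle_{Z^\ast;Z}=0$ for all $v\in Z_n$. The a priori bound is supplied by the coercivity hypothesis: for $\|z\|_Z>\lambda$ one has $\langle \mathcal{G}(z)-B^{-1}_{r}g,z\rangle=\langle B^{-1}_{r}z,z\rangle+\langle F(y,z)-B^{-1}_{r}g,z\rangle\ge 0$, the first term being nonnegative by monotonicity of $B^{-1}_{r}$ and the second by assumption. The acute-angle (Brouwer) lemma then yields $z_n$ with $\|z_n\|_Z\le\rho$ for some $\rho>\lambda$. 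From the Galerkin identity $\langle F(y,z_n),z_n\rangle=\langle B^{-1}_{r}g,z_n\rangle-\langle B^{-1}_{r}z_n,z_n\rangle\le\langle B^{-1}_{r}g,z_n\rangle\le C$, so $\{F(y,z_n)\}$ is bounded in $Z^\ast$ by Remark \ref{Rem 1.5}. Passing to subsequences, $z_n\rightharpoonup z^\ast$ in $Z$ and $F(y,z_n)\rightharpoonup\chi$ in $Z^\ast$; since $B^{-1}_{r}$ is linear continuous, the Galerkin relations pass to the limit on the dense set $\bigcup_n Z_n$ and give $B^{-1}_{r}z^\ast+\chi=B^{-1}_{r}g$, whence $z^\ast+B\chi=g$ after applying $B$.

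The hard part will be the identification $\chi=F(y,z^\ast)$, which is the only place where the full strength of the u.s.b.v. structure enters. First I would establish the inequality $\limsup_{n}\langle F(y,z_n),z_n\rangle\le\langle\chi,z^\ast\rangle$. Writing $z_n=g-BF(y,z_n)$ gives $\langle F(y,z_n),z_n\rangle=\langle F(y,z_n),g\rangle-\langle F(y,z_n),BF(y,z_n)\rangle$; the quadratic form $v\mapsto\langle v,Bv\rangle$ attached to the positive operator $B$ is weakly lower semicontinuous (expand around $\chi$ so that the two linear cross-terms vanish), hence $\liminf_n\langle F(y,z_n),BF(y,z_n)\rangle\ge\langle\chi,B\chi\rangle$, and the claimed bound follows since $z^\ast=g-B\chi$. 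With $z_n\rightharpoonup z^\ast$, $F(y,z_n)\rightharpoonup\chi$ and this limsup inequality in hand, I would invoke the abstract Minty-type argument for operators with u.s.b.v.---the abstract counterpart of Lemma \ref{Gikov1}. Concretely, testing the inequality \eqref{1.9.1} along $\zeta=z^\ast-t w$ for $w\in Z$, $t\downarrow 0$, using that the compact seminorm sends the weakly convergent $z_n-\zeta$ to a strongly convergent one so that $\||z_n-\zeta\||_Z\to\||z^\ast-\zeta\||_Z=t\||w\||_Z$ and the remainder obeys $\tfrac{1}{t}C_y(R,t\||w\||_Z)\to 0$, and finally using radial continuity of $F(y,\cdot)$, I would arrive at $\langle\chi-F(y,z^\ast),w\rangle\ge 0$ for every $w\in Z$, hence $\chi=F(y,z^\ast)$. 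Then $z^\ast+BF(y,z^\ast)=g$, so $\mathcal{H}(y)\neq\varnothing$.

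Finally, for weak compactness of $\mathcal{H}(y)$ I would first record a uniform a priori bound. For any $z\in\mathcal{H}(y)$, setting $u:=F(y,z)-B^{-1}_{r}g$ one computes $Bu=BF(y,z)-g=-z$, hence $\langle F(y,z)-B^{-1}_{r}g,z\rangle=\langle u,-Bu\rangle=-\langle u,Bu\rangle\le 0$ by positivity of $B$. Since the coercivity hypothesis forces the reverse inequality as soon as $\|z\|_Z>\lambda$, positivity of $B$ then yields $Bu=0$, i.e.\ $z=0$, which contradicts $\|z\|_Z>\lambda>0$; therefore $\|z\|_Z\le\lambda$ for every $z\in\mathcal{H}(y)$. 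Thus $\mathcal{H}(y)$ is bounded, and by reflexivity weakly precompact. Weak closedness follows by repeating the limit-passage of the two preceding paragraphs: for $z_n\in\mathcal{H}(y)$ with $z_n\rightharpoonup z^\ast$, the bound $\langle F(y,z_n),z_n\rangle\le\langle B^{-1}_{r}g,z_n\rangle$ together with Remark \ref{Rem 1.5} gives boundedness of $\{F(y,z_n)\}$ in $Z^\ast$, the equation passes to the limit as $z^\ast+B\chi=g$, and the same identification argument yields $\chi=F(y,z^\ast)$, so $z^\ast\in\mathcal{H}(y)$. Hence $\mathcal{H}(y)$ is nonempty and weakly compact, as claimed.
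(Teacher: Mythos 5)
Your weak-compactness half is essentially sound and in fact reproduces the machinery the paper itself uses in Step~2 of the proof of Theorem~\ref{Th 2.8}: the a priori bound $\|z\|_{Z}\le\lambda$ from positivity plus the coercivity hypothesis, boundedness of $\{F(y,z_n)\}$ via Remark~\ref{Rem 1.5}, weak passage to the limit in the equation, and the Minty-type identification from \eqref{1.9.1} tested along $\zeta=z^\ast-tw$. (Note that the paper never proves this theorem --- it quotes it from \cite[Theorem 1.2.1]{AMJA} --- so that limit-passage argument is the only internal yardstick.) One blemish even here: from $\langle u,Bu\rangle_{Z^\ast;Z}=0$ and positivity you conclude $Bu=0$. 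For a merely positive, non-symmetric $B$ this implication is false (only the symmetric part of $B$ annihilates $u$: take $B$ skew, so $\langle v,Bv\rangle\equiv 0$ while $Bu\neq0$). You need the strict reading of positivity, $\langle v,Bv\rangle_{Z^\ast;Z}>0$ for $v\neq 0$, which is how the paper uses the word (cf.\ ``the left-hand side in \eqref{1.18} is strictly positive'').

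The genuine gap is in the existence half. Your derivation of the key inequality $\limsup_n\langle F(y,z_n),z_n\rangle_{Z^\ast;Z}\le\langle\chi,z^\ast\rangle_{Z^\ast;Z}$ rests on ``writing $z_n=g-BF(y,z_n)$'', but this identity is false for Galerkin approximants: the Galerkin solution satisfies $\langle B^{-1}_{r}z_n+F(y,z_n)-B^{-1}_{r}g,v\rangle_{Z^\ast;Z}=0$ only for $v\in Z_n$, i.e.\ the residual annihilates $Z_n$; it does not vanish, so you cannot apply $B$ and recover the Hammerstein equation at level $n$. Relatedly, you call $B^{-1}_{r}$ ``linear continuous'' when passing to the limit in the Galerkin relations, whereas the hypothesis grants only the existence of a right inverse, and a linear right inverse of a bounded operator need not be bounded (its graph need not be closed). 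Both defects are repairable, but only through an observation you never make: under strict positivity $B$ is injective, and the existence of a right inverse makes it surjective, hence bijective, so $B^{-1}_{r}=B^{-1}$ is bounded by the open mapping theorem; then $B^{-1}_{r}$ is weakly continuous, the limit relation $B^{-1}_{r}z^\ast+\chi=B^{-1}_{r}g$ follows by testing on $\bigcup_n Z_n$, and the limsup inequality comes directly from the Galerkin identity $\langle F(y,z_n),z_n\rangle_{Z^\ast;Z}=\langle B^{-1}_{r}g,z_n\rangle_{Z^\ast;Z}-\langle B^{-1}_{r}z_n,z_n\rangle_{Z^\ast;Z}$ together with weak lower semicontinuity of the nonnegative (hence convex) continuous quadratic form $z\mapsto\langle B^{-1}_{r}z,z\rangle_{Z^\ast;Z}$ --- not from the form $v\mapsto\langle v,Bv\rangle$ you invoke, which has nothing to act on at level $n$. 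Finally, your appeal to the acute-angle (Brouwer) lemma silently requires continuity of the finite-dimensional maps $\xi\mapsto\langle B^{-1}_{r}(\sum_i\xi_ie_i)+F(y,\sum_i\xi_ie_i)-B^{-1}_{r}g,e_j\rangle_{Z^\ast;Z}$; radial continuity alone does not provide this, so you must additionally invoke (or prove) the standard lemma that a radially continuous operator with semi-bounded variation is continuous on finite-dimensional subspaces.
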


In what follows, we set $Y=W_0^{1,p}(\Omega)$, $Z=L^p(\Omega)$, and $Z^\ast=L^q(\Omega)$.

%%%%%%%%%%%%%%%%%%%%%%%%%%%%%%%%%%%%%%%%%%%%%%%%%%%%%%%%%%%%

\section{Setting of the optimal control problem }
\label{Sec 2}
Let us consider the following optimal control problem:
\begin{gather}
\label{2.7} \text{Minimize }\ \Big\{I(A,y,z)=\int_\Omega
|z(x)-z_d(x)|^2\,dx\Big\},
\end{gather}
subject to the constraints
\begin{gather}
\label{2.7a} \int_\Omega |A^{1/2}\nabla y|^{p-2}(A\nabla y,\nabla \varphi)_{\mathbb{R}^N}\,dx=\int_\Omega f \varphi\,dx,\  \forall\,\varphi\in W^{1,p}_0(\Omega),\\
\label{2.7b} A\in\mathfrak{A}_{ad},\quad y\in W^{1,p}_0(\Omega),\\
\label{2.7c} \int_\Omega z \,\phi \, dx + \int_\Omega B F(y,z)\,\phi\,dx=0,\  \forall\,\phi\in L^q(\Omega),
\end{gather}
where $f\in L^2(\Omega)$ and $z_d\in L^p(\Omega)$ are given distributions, $B:L^q(\Omega)\to L^p(\Omega)$ is a linear operator, $F:W_0^{1,p}(\Omega)\times L^p(\Omega)\to L^q(\Omega)$ is a non-linear operator.

Let us denote by $\Xi\subset L^\infty(\Omega;\mathbb{S}^{N})\times W^{1,p}_0(\Omega)\times L^p(\Omega)$ the set of all admissible triplets to the
optimal control problem \eqref{2.7}--\eqref{2.7c}.

Hereinafter we suppose that the space $ L^\infty(\Omega;\mathbb{S}^N) \times W_0^{1,p}(\Omega)\times L^p(\Omega)$ is endowed with the norm $\|(A,y,z)\|_{ L^\infty(\Omega;\mathbb{S}^N) \times W_0^{1,p}(\Omega)\times L^p(\Omega)}:=\|A^\frac{1}{2}\|_{BV(\Omega;\mathbb{S}^N)}+\|y\|_{W^{1,p}_0(\Omega)}+\|z\|_{L^p(\Omega)}$.
\begin{remark}
\label{Rem 3.9}
We recall that a sequence $\{f_k\}_{k=1}^\infty$ converges weakly-$^\ast$ to $f$ in $BV(\Omega)$ if and only if the two following conditions hold (see \cite{AFP2000}): $f_k\to f$ strongly in $L^1(\Omega)$ and $D f_k\stackrel{\ast}{\rightharpoonup} Df$ weakly$^\ast$ in the space of Radon measures $\mathcal{M}(\Omega;\mathbb{R}^N)$.  Moreover, if $\{f_k\}_{k=1}^\infty\subset BV(\Omega)$ converges strongly to some $f$ in $L^1(\Omega)$ and satisfies $\sup_{k\in \mathbb{N}}\int_\Omega|Df_k|<+\infty$, then (see, for instance,  \cite{AFP2000})
\begin{equation}
\label{3.9}
\begin{aligned}
(i)&\  f\in BV(\Omega)\ \text{ and }\ \int_\Omega|Df|\le\liminf_{k\to\infty}\int_\Omega|Df_k|;\\
(ii)& \ f_k\stackrel{\ast}{\rightharpoonup} f\ \text{ in }\ BV(\Omega).
\end{aligned}
\end{equation}
Also we recall, that uniformly bounded sets in $BV$-norm are relatively
compact in $L^1(\Omega)$.
\end{remark}

\begin{definition}\label{def_tau}
 We say that a sequence of triplets
$\{(A_k,y_k,z_k)\}_{k\in\mathbb{N}}$ from the space $ L^\infty(\Omega;\mathbb{S}^{N})\times W^{1,p}_0(\Omega)\times L^p(\Omega)$
$\tau$-converges to a triplet $(A_0,y_0,z_0)$ if $A_k^{\frac{1}{2}}\stackrel{\ast}{\rightharpoonup} A_0^{\frac{1}{2}}$ in $BV(\Omega;\mathbb{S}^N)$,
$y_k\rightharpoonup y_0$ in $W_0^{1,p}(\Omega)$ and $z_k\rightharpoonup z_0$ in $L^p(\Omega)$.
\end{definition}

Further we use the following auxiliary results.
\begin{proposition}
\label{Prop 1.16} For each $A\in\mathfrak{A}_{ad}$ and every $f\in
L^2(\Omega)$, a weak solution $y$ to variational
problem \eqref{2.7a}--\eqref{2.7b} satisfies the estimate
\begin{equation}
\label{1.17}
\|y\|_{W^{1,p}_0(\Omega)}\le
 \alpha^{-{q}}\|f\|^{\frac{q}{p}}_{W^{-1,\,q}(\Omega)}.
\end{equation}
\end{proposition}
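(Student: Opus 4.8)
The plan is to derive \eqref{1.17} directly from the variational identity \eqref{2.7a} by using the solution $y$ itself as a test function and then invoking the uniform ellipticity built into the admissible class $\mathfrak{A}_{ad}$. Since \eqref{2.7a} is required to hold for every $\varphi\in W^{1,p}_0(\Omega)$, I would first substitute $\varphi=y$. Using the identity $(A\nabla y,\nabla y)_{\mathbb{R}^N}=|A^{1/2}\nabla y|^2$, the left-hand side collapses to
\begin{equation*}
\int_\Omega |A^{1/2}\nabla y|^{p-2}(A\nabla y,\nabla y)_{\mathbb{R}^N}\,dx=\int_\Omega |A^{1/2}\nabla y|^{p}\,dx=\langle \mathcal{A}(A,y),y\rangle_{W^{-1,q}(\Omega);W^{1,p}_0(\Omega)},
\end{equation*}
while the right-hand side equals $\int_\Omega f y\,dx=\langle f,y\rangle_{W^{-1,q}(\Omega);W^{1,p}_0(\Omega)}$.

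For the lower bound I would simply appeal to the coercivity estimate already recorded above, namely $\langle \mathcal{A}(A,y),y\rangle\ge \alpha^p\|y\|^p_{W^{1,p}_0(\Omega)}$, which follows from the pointwise inequality $|A^{1/2}\nabla y|^p\ge \xi_1^p|\nabla y|^p\ge\alpha^p|\nabla y|^p$ guaranteed by \eqref{1.1}--\eqref{1.2}, together with the identification of $\|y\|_{W^{1,p}_0(\Omega)}$ with $\|\nabla y\|_{L^p(\Omega)^N}$. For the right-hand side I would use the duality pairing bound $\langle f,y\rangle\le\|f\|_{W^{-1,q}(\Omega)}\|y\|_{W^{1,p}_0(\Omega)}$.

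Combining these two estimates yields $\alpha^p\|y\|^p_{W^{1,p}_0(\Omega)}\le\|f\|_{W^{-1,q}(\Omega)}\|y\|_{W^{1,p}_0(\Omega)}$. Discarding the trivial case $y=0$, I would divide through by $\|y\|_{W^{1,p}_0(\Omega)}$ to obtain $\|y\|^{p-1}_{W^{1,p}_0(\Omega)}\le\alpha^{-p}\|f\|_{W^{-1,q}(\Omega)}$, and finally raise both sides to the power $1/(p-1)$. Using the conjugacy relations $p/(p-1)=q$ and $1/(p-1)=q/p$ then turns $\alpha^{-p/(p-1)}$ into $\alpha^{-q}$ and $\|f\|^{1/(p-1)}_{W^{-1,q}(\Omega)}$ into $\|f\|^{q/p}_{W^{-1,q}(\Omega)}$, which is precisely \eqref{1.17}. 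I do not anticipate any genuine difficulty in this argument; the only points demanding attention are the collapse of the degenerate weight $|A^{1/2}\nabla y|^{p-2}$ against $(A\nabla y,\nabla y)_{\mathbb{R}^N}$ into the single coercive term $|A^{1/2}\nabla y|^{p}$, and the careful bookkeeping of conjugate exponents needed to match the stated constant $\alpha^{-q}$ and power $q/p$ exactly.
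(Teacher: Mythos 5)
Your proposal is correct and follows essentially the same route as the paper: test \eqref{2.7a} with $\varphi=y$, use $(A\nabla y,\nabla y)_{\mathbb{R}^N}=|A^{1/2}\nabla y|^2$ together with the ellipticity bound $\xi_1\ge\alpha$ to get $\alpha^p\|y\|^p_{W^{1,p}_0(\Omega)}\le\langle f,y\rangle\le\|f\|_{W^{-1,q}(\Omega)}\|y\|_{W^{1,p}_0(\Omega)}$, and solve for the norm. The only difference is that you spell out the final exponent bookkeeping ($p/(p-1)=q$, $1/(p-1)=q/p$) that the paper leaves implicit in the phrase ``immediately follows.''
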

\begin{proof} The estimate \eqref{1.17} immediately follows from the following relations
\begin{align*}
\alpha^{p}\|y\|^p_{W_0^{1,p}(\Omega)}&\le\int_\Omega |A^{\frac{1}{2}}\nabla y|^p \,dx =\left<\mathcal{A}(A,y),y\right>_{W^{-1,q}(\Omega);W_0^{1,p}(\Omega)}
\\&=\left<f,y\right>_{W^{-1,q}(\Omega);W_0^{1,p}(\Omega)}
\le\|f\|_{W^{-1,q}(\Omega)}\|y\|_{W_0^{1,p}(\Omega)}.
\end{align*}
\end{proof}

\begin{lemma}
\label{Lemma 3.0a}
Let $\left\{(A_k,y_k)\in  L^\infty(\Omega;\mathbb{S}^N)\times W_0^{1,p}(\Omega) \right\}_{k\in \mathbb{N}}$ be a sequence of pairs such that $A_k\in\mathfrak{A}_{ad}$ $\forall\,k\in\mathbb{N}$, $A^\frac{1}{2}_k\stackrel{\ast}{\rightharpoonup} A^\frac{1}{2}$
in $BV(\Omega;\mathbb{S}^N)$, and
$y_k\rightharpoonup y$ in $W^{1,p}_0(\Omega)$. Then
\begin{align}
\notag
\lim_{k\to\infty}&\int_\Omega |\left(\nabla\varphi,A_k\nabla\varphi\right)_{\mathbb{R}^N} |^\frac{p-2}{2}\left(\nabla y_k,A_k\nabla\varphi\right)_{\mathbb{R}^N} \,dx\\
\label{3.00a}
=&\int_\Omega |\left(\nabla\varphi,A\nabla\varphi\right)_{\mathbb{R}^N} |^\frac{p-2}{2}\left(\nabla y,A\nabla\varphi\right) _{\mathbb{R}^N} \,dx,\quad\forall\varphi\in C^{\infty}_0(\Omega).
\end{align}
\end{lemma}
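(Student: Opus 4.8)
The plan is to recognize the integrand as a duality pairing between a weakly convergent factor and a strongly convergent one, and then to invoke the classical weak--strong continuity of such products. Since $\varphi\in C^\infty_0(\Omega)$ is fixed, the field $\nabla\varphi$ is bounded and smooth, so it plays no role in the degeneracy; all the analysis concerns the coefficient $A_k$ and the weakly convergent sequence $y_k$. Accordingly, I would set
\[
g_k(x):=|(\nabla\varphi,A_k\nabla\varphi)_{\mathbb{R}^N}|^{\frac{p-2}{2}}A_k(x)\nabla\varphi(x)=|A_k^{1/2}\nabla\varphi|^{p-2}A_k(x)\nabla\varphi(x),
\]
and analogously $g$ with $A$ in place of $A_k$, so that the left integrand equals $(\nabla y_k,g_k)_{\mathbb{R}^N}$ and the right one equals $(\nabla y,g)_{\mathbb{R}^N}$. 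Because $y_k\rightharpoonup y$ in $W^{1,p}_0(\Omega)$ gives $\nabla y_k\rightharpoonup\nabla y$ in $L^p(\Omega)^N$, the whole statement reduces to proving the strong convergence $g_k\to g$ in $L^q(\Omega)^N$.

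The core of the argument, and the only delicate point, is this strong convergence of $g_k$. First, by Remark \ref{Rem 3.9} the weak-$*$ convergence $A_k^{1/2}\stackrel{*}{\rightharpoonup}A^{1/2}$ in $BV(\Omega;\mathbb{S}^N)$ entails $A_k^{1/2}\to A^{1/2}$ strongly in $L^1(\Omega;\mathbb{S}^N)$. Second, the constraint $A_k\in\mathfrak{A}_{ad}$ gives the uniform spectral bound $\|A_k^{1/2}(x)\|_2\le\xi_2(x)$, hence $\|A_k^{1/2}\|_{L^\infty}\le\sqrt{N}\|\xi_2\|_{L^\infty}$ for all $k$. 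Given an arbitrary subsequence, I would extract from the $L^1$ convergence a further subsequence (not relabeled) with $A_k^{1/2}\to A^{1/2}$ a.e.\ in $\Omega$; squaring gives $A_k=(A_k^{1/2})^2\to A$ a.e., and since $s\mapsto|s|^{p-2}$ and matrix--vector multiplication are continuous, $g_k\to g$ a.e.\ in $\Omega$. The integrands are uniformly dominated, because
\[
|g_k(x)|\le|A_k^{1/2}\nabla\varphi|^{p-2}\,\|A_k\|_2\,|\nabla\varphi|\le\xi_2^{p}(x)\,|\nabla\varphi(x)|^{p-1}\in L^\infty(\Omega)\subset L^q(\Omega),
\]
using $\|A_k\|_2=\|A_k^{1/2}\|_2^2\le\xi_2^2$. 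Applying Lebesgue's dominated convergence theorem to $|g_k-g|^q$ then yields $g_k\to g$ in $L^q(\Omega)^N$ along this further subsequence.

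Finally, I would close by the usual subsequence principle: since the limit $g$ does not depend on the chosen subsequence, strong $L^q$-convergence of the entire sequence $g_k\to g$ follows. The weak--strong pairing then concludes the proof: writing
\[
\int_\Omega(\nabla y_k,g_k)\,dx-\int_\Omega(\nabla y,g)\,dx=\int_\Omega(\nabla y_k,g_k-g)\,dx+\int_\Omega(\nabla y_k-\nabla y,g)\,dx,
\]
the first term is bounded by $\|\nabla y_k\|_{L^p}\|g_k-g\|_{L^q}\to0$ (the norms $\|y_k\|_{W^{1,p}_0(\Omega)}$ being bounded, as weakly convergent sequences are norm-bounded), while the second tends to zero since $g\in L^q(\Omega)^N$ tests the weak convergence $\nabla y_k\rightharpoonup\nabla y$. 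The main obstacle, as indicated, is precisely the strong $L^q$ convergence of $g_k$: it hinges on upgrading the $BV$ weak-$*$ convergence to a.e.\ convergence of a subsequence and on exploiting the uniform spectral bound from $\mathfrak{A}_{ad}$ to secure an integrable majorant. The smoothness of $\varphi$ is what keeps the majorant $\xi_2^p|\nabla\varphi|^{p-1}$ bounded, thereby circumventing the degeneracy that would arise were the fixed test field replaced by $\nabla y_k$ itself.
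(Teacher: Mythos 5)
Your proof is correct, and it takes a genuinely different (and slightly more economical) route than the paper's. Both arguments share the same two ingredients~--- the upgrade of $A_k^{1/2}\stackrel{\ast}{\rightharpoonup}A^{1/2}$ in $BV$ to strong/a.e.\ convergence of $A_k^{1/2}$ (Remark~\ref{Rem 3.9}) together with the uniform spectral bound $\|A_k^{1/2}\|_2\le \xi_2$ from $\mathfrak{A}_{ad}$~--- and both conclude by pairing a weakly convergent $L^p$ sequence against a strongly convergent $L^q$ sequence. The difference lies in the factorization of the integrand. The paper writes $\left(\nabla y_k,A_k\nabla\varphi\right)_{\mathbb{R}^N}=\left(A_k^{1/2}\nabla y_k,A_k^{1/2}\nabla\varphi\right)_{\mathbb{R}^N}$, so its weak factor is $A_k^{1/2}\nabla y_k$; this forces an auxiliary step, namely establishing $A_k^{1/2}\nabla y_k\rightharpoonup A^{1/2}\nabla y$ in $L^p(\Omega)^N$ (its \eqref{3.03a}), which the paper proves by testing against smooth fields and then invoking the uniform $L^p$ bound on $A_k^{1/2}\nabla y_k$. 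You keep the whole matrix $A_k$ attached to $\nabla\varphi$, so your weak factor is exactly $\nabla y_k$, whose weak convergence is a hypothesis, and no auxiliary weak-convergence statement is needed; all the work is concentrated in the strong $L^q$ convergence of $g_k=|A_k^{1/2}\nabla\varphi|^{p-2}A_k\nabla\varphi$, which you obtain by the a.e.-subsequence/dominated-convergence/subsequence-principle mechanism with the majorant $\xi_2^p|\nabla\varphi|^{p-1}$. (The paper's counterpart, its \eqref{3.01a}, is obtained similarly via $A_k^{1/2}\to A^{1/2}$ in every $L^r$, $r<\infty$, plus continuity of $v\mapsto|v|^{p-2}v$ from $L^p$ to $L^q$.) Your version is shorter and more self-contained; the paper's version costs an extra density-plus-boundedness argument but produces the intermediate convergence \eqref{3.03a} along the way. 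Both are fully rigorous.
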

\begin{proof}
Since $A^\frac{1}{2}_k\rightarrow A^\frac{1}{2}$ in $L^1(\Omega;\mathbb{S}^N)$ and $\{A_k\}_{k \in \mathbb{N}}$ is bounded in $L^\infty(\Omega;\mathbb{S}^N)$, by Lebesgue's Theorem we get that $A^\frac{1}{2}_k\rightarrow A^\frac{1}{2}$ strongly in $L^r(\Omega;\mathbb{S}^N)$ for every $1 \le r < +\infty$. Hence, $A^\frac{1}{2}_k\nabla\varphi\rightarrow A^\frac{1}{2}\nabla\varphi$ strongly in $L^p(\Omega)^N$ for every $\varphi\in C^{\infty}_0(\Omega)$. Therefore,
\begin{equation}
\label{3.01a}
|A^{\frac{1}{2}}_k\nabla\varphi|^{p-2}A^\frac{1}{2}_k\nabla\varphi\rightarrow |A^{\frac{1}{2}}\nabla\varphi|^{p-2}A^\frac{1}{2}\nabla\varphi\quad \text{ in }\ L^q(\Omega)^N,\quad\forall\,\varphi\in C^{\infty}_0(\Omega).
\end{equation}

Moreover, since $A^\frac{1}{2}_k\nabla\psi\rightarrow A^\frac{1}{2}\nabla\psi$ strongly in $L^q(\Omega)^N$ for every $\psi\in C^{\infty}_0(\Omega)$ and $\nabla y_k\rightharpoonup \nabla y$ in $L^p(\Omega)^N$, it follows that
\begin{align}
\notag
\int_\Omega \left(A^\frac{1}{2}_k\nabla y_k,\nabla\psi\right)_{\mathbb{R}^N} \,dx &= \int_\Omega \left(\nabla y_k,A^\frac{1}{2}_k\nabla\psi\right)_{\mathbb{R}^N} \,dx
\\\label{3.02a}\rightarrow
\int_\Omega \left(\nabla y,A^\frac{1}{2}\nabla\psi\right)_{\mathbb{R}^N} \,dx
&= \int_\Omega \left(A^\frac{1}{2}\nabla y,\nabla\psi\right)_{\mathbb{R}^N} \,dx,\quad\forall\psi\in C^\infty_0(\Omega)
\end{align}
as a product of weakly and strongly convergent sequences in $L^p(\Omega)^N$ and $L^q(\Omega)^N$, respectively.
Using the fact that
\[
\sup_{k\in \mathbb{N}}\|A^\frac{1}{2}_k\nabla y_k\|_{L^p(\Omega)^N}\le \|\xi_2\|_{L^\infty(\Omega)}
\sup_{k\in \mathbb{N}}\|\nabla y_k\|_{L^p(\Omega)^N}<+\infty,
\]
we finally get from \eqref{3.02a}
\begin{equation}
\label{3.03a}
A^\frac{1}{2}_k\nabla y_k\rightharpoonup A^\frac{1}{2}\nabla y\quad\text{in }\ L^p(\Omega)^N.
\end{equation}
Thus, to complete the proof it remains to note that
\begin{multline*}
\int_\Omega |\left(\nabla\varphi,A_k\nabla\varphi\right)_{\mathbb{R}^N} |^\frac{p-2}{2}\left(\nabla y_k,A_k\nabla\varphi\right)_{\mathbb{R}^N}dx \\=
\int_\Omega \left(|A^{\frac{1}{2}}_k\nabla\varphi|^{p-2}A^{\frac{1}{2}}_k\nabla \varphi,A^\frac{1}{2}_k\nabla y_k\right)_{\mathbb{R}^N}dx
\end{multline*}
and apply the properties \eqref{3.01a} and \eqref{3.03a}.
\end{proof}

The following result concerns the regularity of the optimal control problem \eqref{2.7}--\eqref{2.7c}.
\begin{proposition}\label{prop 1.15} Let
 $B:L^q(\Omega)\to L^p(\Omega)$ and $F:W_0^{1,p}(\Omega)\times L^p(\Omega)\to L^q(\Omega)$ be operators satisfying all conditions of Theorem \ref{Th 1.1*}.
Then the set
\begin{multline*}
\Xi=\big\{(A,y,z)\in L^\infty(\Omega;\mathbb{S}^N)\times W_0^{1,p}(\Omega)\times L^p(\Omega): \\\mathcal{A}(A,y)=f,\; z+B F(y,z)=0\big\}
\end{multline*}
is nonempty for every $f\in L^2(\Omega)$.
\end{proposition}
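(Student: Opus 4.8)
The plan is to exploit the triangular structure of the constraints: the elliptic state equation \eqref{2.7a} determines $y$ independently of $z$, while the Hammerstein equation \eqref{2.7c} involves $z$ only after $y$ has been fixed. Consequently no genuine coupled fixed-point argument is needed, and it suffices to solve the two problems in succession and then assemble the resulting triplet.

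First I would recall that $\mathfrak{A}_{ad}$ is a nonempty convex subset of $L^\infty(\Omega;\mathbb{S}^N)$, as established immediately after \eqref{1.2}, and fix an arbitrary control $A\in\mathfrak{A}_{ad}$. Since the operator $\mathcal{A}(A,\cdot):W_0^{1,p}(\Omega)\to W^{-1,q}(\Omega)$ has already been shown to be bounded, coercive \eqref{1.5}, strictly monotone \eqref{1.4}--\eqref{1.4a} and semicontinuous \eqref{1.6}, the standard existence theory for monotone operators yields a unique weak solution $y\in W_0^{1,p}(\Omega)$ of \eqref{2.7a} for the given $f\in L^2(\Omega)$. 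Proposition~\ref{Prop 1.16} moreover supplies the a priori bound \eqref{1.17}, so that $y$ lies in a fixed bounded subset $Y_0\subset W_0^{1,p}(\Omega)$.

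With this $y$ now frozen, I would turn to the Hammerstein equation \eqref{2.7c}, namely $z+BF(y,z)=0$, which is precisely equation \eqref{1.9.2} with $g=0$ in the setting $Y=W_0^{1,p}(\Omega)$, $Z=L^p(\Omega)$, $Z^\ast=L^q(\Omega)$. By assumption $B$ and $F$ satisfy all the hypotheses of Theorem~\ref{Th 1.1*}; in particular, for $g=0$ the required coercivity-type inequality reduces to $\langle F(y,z),z\rangle_{Z^\ast;Z}\ge 0$ whenever $\|z\|_{L^p(\Omega)}>\lambda$. Applying Theorem~\ref{Th 1.1*} to the fixed $y\in Y_0$ then guarantees that the solution set $\mathcal{H}(y)=\{z\in L^p(\Omega):z+BF(y,z)=0\}$ is nonempty (and in fact weakly compact). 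Choosing any $z\in\mathcal{H}(y)$ produces a triplet $(A,y,z)$ fulfilling all of \eqref{2.7a}--\eqref{2.7c}, whence $(A,y,z)\in\Xi$ and $\Xi\neq\emptyset$.

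The argument is essentially a soft concatenation of two existence results, so I do not anticipate a serious difficulty. The only point demanding a little care is the verification that Theorem~\ref{Th 1.1*} is genuinely applicable: one must enclose the single solution $y$ in a bounded set $Y_0$ relative to which the u.s.b.v. property of $F$ and the coercivity inequality hold, and confirm that the choice $g=0$ does not violate those conditions. Since $Y_0$ may be taken to be any bounded set and \eqref{1.17} already localizes $y$, this reduces to routine bookkeeping.
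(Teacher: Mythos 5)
Your proposal is correct and follows essentially the same route as the paper's own proof: fix an arbitrary $A\in\mathfrak{A}_{ad}$, use the monotone-operator theory (with the bound \eqref{1.17}) to obtain the unique state $y=y(A,f)\in W_0^{1,p}(\Omega)$, and then invoke Theorem~\ref{Th 1.1*} to get a solution $z\in\mathcal{H}(y)$ of the Hammerstein equation. Your added care about enclosing $y$ in a bounded set $Y_0$ and checking the case $g=0$ is a fair expansion of details the paper leaves implicit, but it is the same argument.
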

\begin{proof} Let $A\in\mathfrak{A}_{ad}$ be an arbitrary admissible control. Then for a given $f\in L^2(\Omega)$, the Dirichlet boundary problem \eqref{2.7a}--\eqref{2.7b} admits a unique solution $y_A=y(A,f)\in W_0^{1,p}(\Omega)$ which satisfies the estimate \eqref{1.17}. It remains to remark that the corresponding Hammerstein equation $z+BF(y_A,z)=0$ has a nonempty set of solutions $\mathcal{H}(y_A)$ by Theorem \ref{Th 1.1*}.
\end{proof}

\section{Existence of optimal solutions}

The following result is crucial for our consideration and it states the fact, that the set of admissible triplets to the optimal control problem  \eqref{2.7}--\eqref{2.7c} is closed with respect to $\tau$-topology of the space $L^\infty(\Omega;\mathbb{S}^N)\times W_0^{1,p}(\Omega)\times L^p(\Omega)$.

\begin{theorem}
\label{Th 2.8} Assume the following conditions hold:
\begin{itemize}
\item The operators $B:L^q(\Omega)\to L^p(\Omega)$ and $F:W_0^{1,p}(\Omega)\times L^p(\Omega)\to L^q(\Omega)$ satisfy conditions of Theorem \ref{Th 1.1*};
\item The operator $F(\cdot, z):W_0^{1,p}(\Omega)\to L^q(\Omega)$ is compact in the following sense:

if $y_k\rightharpoonup y_0$ weakly in $W_0^{1,p}(\Omega)$, then  $F(y_k,z)\to F(y_0,z)$ strongly in $L^q(\Omega)$.
\end{itemize}
Then for every $f\in L^2(\Omega)$ the set $\Xi$
is sequentially $\tau$-closed, i.e. if a sequence $\{(A_k,y_k,z_k)\in \Xi\}_{k\in\mathbb{N}}$ $\tau$-converges to a triplet $(A_0,y_0,z_0)\in L^\infty(\Omega;\mathbb{S}^N)\times W^{1,p}_0(\Omega)\times L^p(\Omega)$, then $A_0\in \mathfrak{A}_{ad}$, $y_0=y(A_0)$, $z_0\in\mathcal{H}(y_0)$, and, therefore,
$(A_0,y_0,z_0)\in\Xi$.
\end{theorem}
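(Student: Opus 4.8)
The plan is to establish the three membership properties $A_0\in\mathfrak{A}_{ad}$, $y_0=y(A_0)$ and $z_0\in\mathcal{H}(y_0)$ one at a time, the last being the real difficulty. For $A_0\in\mathfrak{A}_{ad}$ I would argue as follows. By Remark \ref{Rem 3.9}, the weak-$\ast$ convergence $A_k^{1/2}\stackrel{\ast}{\rightharpoonup}A_0^{1/2}$ in $BV(\Omega;\mathbb{S}^N)$ entails strong convergence $A_k^{1/2}\to A_0^{1/2}$ in $L^1(\Omega;\mathbb{S}^N)$ and the lower semicontinuity estimate $\int_\Omega|DA_0^{1/2}|\le\liminf_k\int_\Omega|DA_k^{1/2}|\le\gamma$. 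Passing to a subsequence for which the $L^1$-convergence is realized a.e., and using continuity of $M\mapsto M^2$, I would let $k\to\infty$ in the pointwise spectral bounds $\xi_1^2|\eta|^2\le(\eta,A_k\eta)_{\mathbb{R}^N}\le\xi_2^2|\eta|^2$; these survive the a.e. limit, so $A_0$ fulfils all the constraints \eqref{1.2}.

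To show $y_0=y(A_0)$ I would exploit the Minty formulation of Definition \ref{Def 1.1}, which is advantageous here because the nonlinearity is evaluated only at the fixed smooth test field. Since each $y_k=y(A_k)$ satisfies \eqref{1.6*} with $A_k$, for every $\varphi\in C_0^\infty(\Omega)$
$$\int_\Omega|A_k^{1/2}\nabla\varphi|^{p-2}(A_k\nabla\varphi,\nabla\varphi-\nabla y_k)_{\mathbb{R}^N}\,dx\ge\int_\Omega f(\varphi-y_k)\,dx .$$
The cross term converges by Lemma \ref{Lemma 3.0a}; the term containing only $\varphi$ converges because $A_k^{1/2}\nabla\varphi\to A_0^{1/2}\nabla\varphi$ strongly in $L^p(\Omega)^N$ (as noted in the proof of that lemma); and the right-hand side converges since $f\in L^2(\Omega)\hookrightarrow W^{-1,q}(\Omega)$ and $y_k\rightharpoonup y_0$. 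Thus \eqref{1.6*} passes to the limit for $(A_0,y_0)$, and by Remark \ref{Rem 1.1} together with the uniqueness of the weak solution I conclude $y_0=y(A_0)$.

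The main obstacle is $z_0\in\mathcal{H}(y_0)$, because both arguments of $F$ converge only weakly. First I would bound $w_k:=F(y_k,z_k)$. Pairing the identity $z_k+Bw_k=0$ with $w_k$ and invoking positivity of $B$ gives $\langle F(y_k,z_k),z_k\rangle_{Z^\ast;Z}=-\langle w_k,Bw_k\rangle_{Z^\ast;Z}\le 0$; since $\{z_k\}$ is bounded, Remark \ref{Rem 1.5} then yields a uniform bound $\|w_k\|_{L^q(\Omega)}\le C$. Extracting $w_k\rightharpoonup w_0$ in $L^q(\Omega)$ and using continuity of the linear operator $B$, I get $Bw_k\rightharpoonup Bw_0$ in $L^p(\Omega)$, so that $z_0=-Bw_0$ by uniqueness of weak limits. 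It remains only to identify $w_0=F(y_0,z_0)$.

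For this identification I would run a Minty-type argument based on the u.s.b.v. property (Definition \ref{Def.2}), taking $Y_0$ to be a fixed bounded set containing $\{y_k\}$ and $y_0$. For fixed $v$ the inequality $\langle F(y_k,z_k)-F(y_k,v),z_k-v\rangle_{Z^\ast;Z}\ge-\inf_{y\in Y_0}C_y(R;\||z_k-v\||_Z)$ holds. Its terms pass to the limit using: the compactness hypothesis $F(y_k,v)\to F(y_0,v)$ strongly in $L^q(\Omega)$ against $z_k-v\rightharpoonup z_0-v$; the weak convergence $w_k\rightharpoonup w_0$ for the remaining linear terms; the compactness of $\||\cdot\||_Z$ relative to $\|\cdot\|_Z$, giving $\||z_k-v\||_Z\to\||z_0-v\||_Z$; and, crucially, $\limsup_k\langle w_k,z_k\rangle_{Z^\ast;Z}\le\langle w_0,z_0\rangle_{Z^\ast;Z}$, which follows from $\langle w_k,z_k\rangle=-\langle w_k,Bw_k\rangle$ and the weak lower semicontinuity of the convex quadratic form $w\mapsto\langle w,Bw\rangle$ associated with the positive operator $B$. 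The limit inequality reads $\langle w_0-F(y_0,v),z_0-v\rangle_{Z^\ast;Z}\ge-\inf_{y\in Y_0}C_y(R;\||z_0-v\||_Z)$. Choosing $v=z_0-\tau\zeta$, dividing by $\tau>0$, and letting $\tau\to 0^+$ while using $t^{-1}C_y(r,t)\to 0$ and the radial continuity of $F(y_0,\cdot)$ (Definition \ref{Def.1}), I obtain $\langle w_0-F(y_0,z_0),\zeta\rangle_{Z^\ast;Z}\ge 0$ for all $\zeta$; replacing $\zeta$ by $-\zeta$ forces $w_0=F(y_0,z_0)$. Hence $z_0+BF(y_0,z_0)=0$, i.e. $z_0\in\mathcal{H}(y_0)$, and therefore $(A_0,y_0,z_0)\in\Xi$.
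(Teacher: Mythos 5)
Your proof is correct, and its skeleton is the same as the paper's: the pointwise/BV limit argument for $A_0\in\mathfrak{A}_{ad}$, Minty's inequality \eqref{1.6*} combined with Lemma \ref{Lemma 3.0a} for $y_0=y(A_0)$, and a weak-limit-plus-u.s.b.v. identification for $z_0\in\mathcal{H}(y_0)$. The genuine differences lie in how you execute the Hammerstein step. To bound $F(y_k,z_k)$ in $L^q(\Omega)$, the paper first introduces preimages $w_k$ with $B^\ast w_k=z_k$, derives $\|z_k\|_{L^p(\Omega)}\le\lambda$ from the structural hypothesis $\langle F(y,u),u\rangle\ge 0$ for $\|u\|_{L^p(\Omega)}>\lambda$, and argues that $B^\ast$ ``cannot map unbounded sets into bounded ones''; you instead note that weak convergence already makes $\{z_k\}$ bounded and that pairing $z_k+BF(y_k,z_k)=0$ with $F(y_k,z_k)$ and using positivity of $B$ gives $\langle F(y_k,z_k),z_k\rangle\le 0$, so Remark \ref{Rem 1.5} applies at once. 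This is cleaner: it sidesteps the paper's tacit assumption that each $z_k$ lies in the range of $B^\ast$ (the state equation only places $z_k$ in the range of $B$). For the identification of the weak limit, both you and the paper pass to the limit in the same u.s.b.v. inequality (the paper's \eqref{1.21.0}--\eqref{1.21.5}); your justification of the crucial inequality $\limsup_k\langle w_k,z_k\rangle\le\langle w_0,z_0\rangle$ via weak lower semicontinuity of the convex form $w\mapsto\langle w,Bw\rangle$ is precisely what underlies the property the paper declares ``obvious'' in \eqref{1.21.1}. Finally, where the paper concludes by citing \cite[Theorem 1.1.2]{AMJA}, you carry out the Minty trick explicitly (take $v=z_0-\tau\zeta$, divide by $\tau$, use radial continuity and $t^{-1}C_y(r,t)\to 0$), which makes the argument self-contained. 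One point worth writing out in a final version: the passage to the limit in the term $-\inf_{y\in Y_0}C_{y}(R;\||z_k-v\||_Z)$ deserves a line (fix $\tilde y\in Y_0$, bound the right-hand side from below by $-C_{\tilde y}$, pass to the limit by continuity of $C_{\tilde y}$, then take the supremum over $\tilde y$), since an infimum of continuous functions need not be continuous; the paper glosses over the same point.
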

\begin{proof}
Let $\{(A_k,y_k,z_k)\}_{k\in\mathbb{N}}\subset \Xi$ be any $\tau$-convergent sequence of admissible triplets to the optimal control problem \eqref{2.7}--\eqref{2.7c}, and let $(A_0,y_0,z_0)$ be its $\tau$-limit in the sense of Definition \ref{def_tau}. We divide the rest of the proof onto two steps.

\textit{Step 1.} On this step we show that $A_0\in \mathfrak{A}_{ad}$ and $y_0=y(A_0)$. As follows from Definition \ref{def_tau} and  Remark \ref{Rem 3.9}, we have
\begin{gather}
A^\frac{1}{2}_k\to A_0^\frac{1}{2}\text{ in }L^1(\Omega;\mathbb{S}^N),\quad
\label{3.12b}
y_k\rightharpoonup y_0\ \text{ in }\ W^{1,p}_0(\Omega),\\
\label{3.12bb}
\quad A^\frac{1}{2}_k\to A_0^\frac{1}{2}\text{ almost everywhere in }\ \Omega,\\
\label{3.12bbb}
\int_\Omega|D A_0^\frac{1}{2}|\le\liminf_{k\to\infty}\int_\Omega|D A^\frac{1}{2}_k|\le \gamma.
\end{gather}
 Moreover, as follows from \eqref{3.12bb} and definition of the set $\mathfrak{A}_{ad}$ (see \eqref{1.2}), the inequality
\begin{equation}
\label{3.12bc}
\xi^2_1|\eta|^2 \le ( \eta, A_0 \eta)_{\mathbb{R}^N} \le \xi^2_2 |\eta|^2
\text{ a.e. in }\Omega \  \forall\, \eta\in \mathbb{R}^N,
\end{equation}
is valid. Thus,  $A\in \mathfrak{A}_{ad}$.
Hence, it is enough to show that the limit pair $(A_0,y_0)$ is related by \eqref{2.7a} or \eqref{1.6*} (see Definition \ref{Def 1.1} and Remark \ref{Rem 1.1}).  With that in mind we write down relation \eqref{1.6*} for $(A_k,y_k)$ and arbitrary $\varphi\in C_0^\infty(\Omega)$:
\begin{equation}
\label{3.13}
\int_\Omega |(A_k\nabla \varphi,\nabla \varphi)_{\mathbb{R}^N} |^{\frac{p-2}{2}}\left(A_k\nabla \varphi,\nabla \varphi-\nabla y_k\right)_{\mathbb{R}^N}dx\ge\int_\Omega {f}(\varphi- y_k)dx,
\end{equation}
and pass to the limit in it as $k\to\infty$.

In view of the properties \eqref{3.12b}--\eqref{3.12bc} and the boundedness of $\{A_k\}_{k \in \mathbb{N}}$ in $L^\infty(\Omega;\mathbb{S}^N)$, by Lebesgue's Theorem we get that
 $A^\frac{1}{2}_k\nabla\varphi\rightarrow A_0^\frac{1}{2}\nabla\varphi$ strongly in $L^p(\Omega)^N$ for every $\varphi\in C^{\infty}_0(\Omega)$. Therefore,
\begin{align*}
\lim_{k\to\infty}&\int_\Omega |(A_k\nabla \varphi,\nabla \varphi)_{\mathbb{R}^N} |^{\frac{p-2}{2}}\left(A_k\nabla \varphi,\nabla \varphi\right)_{\mathbb{R}^N}dx=\lim_{k\to\infty} \int_\Omega |A_k^\frac{1}{2}\nabla \varphi|^{p}\,dx\\
&= \int_\Omega |A_0^\frac{1}{2}\nabla \varphi|^{p}\,dx
=
\int_\Omega |(A_0\nabla \varphi,\nabla \varphi)_{\mathbb{R}^N} |^{\frac{p-2}{2}}\left(A_0\nabla \varphi,\nabla \varphi\right)_{\mathbb{R}^N}dx
\end{align*}
and, by Lemma~\ref{Lemma 3.0a},
\begin{multline*}
\lim_{k\to\infty} \int_\Omega |(A_k\nabla \varphi,\nabla \varphi)_{\mathbb{R}^N} |^{\frac{p-2}{2}}\left( A_k\nabla \varphi, \nabla y_k\right)_{\mathbb{R}^N} \,dx\\= \int_\Omega |(A_0\nabla \varphi,\nabla \varphi)_{\mathbb{R}^N} |^{\frac{p-2}{2}}\left( A_0\nabla \varphi,\nabla y_0\right)_{\mathbb{R}^N} \,dx.
\end{multline*}
We, thus, can pass to the limit in relation \eqref{3.13} as $k\to\infty$ and arrive at the inequality
\begin{multline*}
\int_\Omega |(A_0\nabla \varphi,\nabla \varphi)_{\mathbb{R}^N} |^{\frac{p-2}{2}}\left(A_0\nabla \varphi,\nabla \varphi-\nabla y_0\right)_{\mathbb{R}^N}dx \\ \label{3.14}\ge\int_\Omega {f}(\varphi- y_0)\,dx,\; \forall\,\varphi\in C_0^\infty(\Omega),
\end{multline*}
which means that $y_0\in W_0^{1,p}(\Omega)$ is a solution to boundary value problem \eqref{2.7a}--\eqref{2.7b}, corresponding to control matrix $A_0$. This fact together with $A_0\in \mathfrak{A}_{ad}$ leads us to the conclusion: $y_0=y(A_0)$.

\textit{Step 2.} On this step we show that  $z_0\in\mathcal{H}(y_0)$. To this end, we have to pass to the limit in equation
\begin{equation}\label{1.22}
z_k+BF(y_k,z_k)=0
\end{equation}
as $k\to\infty$ and get the limit pair $(y_0,z_0)$ is related by the equation
$
z_0+BF(y_0,z_0)=0.
$
With that in mind, let us rewrite equation  \eqref{1.22} in the following way
$$
B^\ast w_k +BF(y_k,B^\ast w_k)=0,
$$
where $w_k\in L^q(\Omega)$, $B^\ast:L^q(\Omega)\to L^p(\Omega)$ is the conjugate operator for $B$, i.e. $\langle B\nu, w\rangle_{L^p(\Omega);L^q(\Omega)}=\langle B^\ast w,\nu\rangle_{L^p(\Omega);L^q(\Omega)}$ and $B^\ast w_k=z_k$. Then, for every $k\in \mathbb{N}$, we have the equality
\begin{equation}
\label{1.18}
 \langle B^\ast w_k,w_k\rangle_{L^p(\Omega);L^q(\Omega)}=-\langle F(y_k,B^\ast w_k),B^\ast w_k\rangle_{L^p(\Omega);L^q(\Omega)}.
 \end{equation}
 The left-hand side in \eqref{1.18} is strictly positive for every $w_k\neq 0$, hence,  the right-hand side  must be positive as well.
In view of the initial assumptions, namely,
$$
\langle F(y,u),u\rangle_{L^q(\Omega);L^p(\Omega)}\ge 0\ \text{ if only}\ \|u\|_{L^p(\Omega)}>\lambda,
$$
we conclude that
\begin{equation}
\label{7*}
\|B^\ast w_k\|_{L^p(\Omega)}=\|z_k\|_{L^p(\Omega)}\le \lambda.
\end{equation}
Since the linear positive operator $B^\ast$ cannot map unbounded sets into bounded ones, it follows that $\|w_k\|_{L^q(\Omega)}\le \lambda_1$.
As a result, see \eqref{1.18},  we have
$$\langle  F(y_k,B^\ast w_k),B^\ast w_k\rangle_{L^q(\Omega);L^p(\Omega)}\le c_1.$$  Hence, in view of Remark \ref{Rem 1.5}, we get
$$
\|F(y_k,B^\ast w_k)\|_{L^q(\Omega)}=\|F(y_k,z_k)\|_{L^q(\Omega)}\le c_2\
\text{ as }\ \|z_k\|_{L^p(\Omega)}\le \lambda.
$$
Since the left-hand side of \eqref{1.18}  does not depend on $y_k$, it follows that the constant $c_2>0$ does not depend on $y_k$ as well.

Taking these arguments into account, we may suppose existence of an element $\nu_0\in L^q(\Omega)$ such that up to a subsequence the weak convergence $F(y_k,z_k)\rightharpoonup \nu_0$ in $L^q(\Omega)$ takes place. As a result, passing to the limit in  \eqref{1.22}, by continuity of $B$, we finally get
\begin{equation}\label{1.20***}
z_0+B\nu_0=0.
\end{equation}
It remains to show that $\nu_0=F(y_0,z_0)$. Let us take an arbitrary element $z\in L^p(\Omega)$ such that $\|z\|_{L^p(\Omega)}\le \lambda$. Using the fact that $F$ is an operator with u.s.b.v., we have
$$
\langle F(y_k,z)-F(y_k,z_k),z-z_k\rangle_{L^q(\Omega);L^p(\Omega)}\ge  -\inf_{y_k\in Y_0}C_{y_k}(\lambda;\||z-z_k\||_{L^p(\Omega)}),
$$
where $Y_0=\{y\in W_0^{1,p}(\Omega):\;y\mbox{ satisfies }\eqref{1.17}\}$, or, after transformation,
\begin{multline}
\label{1.21***}
\langle F(y_k,z),z-z_k\rangle_{L^q(\Omega);L^p(\Omega)}-\langle F(y_k,z_k),z\rangle_{L^q(\Omega);L^p(\Omega)}\\\ge \langle F(y_k,z_k),-z_k\rangle_{L^q(\Omega);L^p(\Omega)}-\inf_{y_k\in Y_0}C_{y_k}(\lambda;\||z-z_k\||_{L^p(\Omega)}).
\end{multline}
Since $-z_k=BF(y_k,z_k)$, it follows from \eqref{1.21***} that
\begin{multline}
\label{1.21.0}
\langle F(y_k,z),z-z_k\rangle_{L^q(\Omega);L^p(\Omega)}-\langle F(y_k,z_k),z\rangle_{L^q(\Omega);L^p(\Omega)}  \\\ge \langle F(y_k,z_k),B F(y_k,z_k) \rangle_{L^q(\Omega);L^p(\Omega)}-\inf_{y_k\in Y_0}C_{y_k}(\lambda;\||z-z_k\||_{L^p(\Omega)}).
\end{multline}

In the meantime, due to the weak convergence $F(y_k,z_k)\rightharpoonup \nu_0$ in $L^q(\Omega)$ as $k\to\infty$, we arrive at the following obvious properties
\begin{gather}
\label{1.21.1}
\liminf_{k\to\infty}\langle F(y_k,z_k), B F(y_k,z_k) \rangle_{L^q(\Omega);L^p(\Omega)}\ge \langle \nu_0, B\nu_0\rangle_{L^q(\Omega);L^p(\Omega)},\\
\label{1.21.2a}
\lim_{k\to\infty}\langle F(y_k,z_k) ,z \rangle_{L^q(\Omega);L^p(\Omega)}=
\langle \nu_0, z\rangle_{L^q(\Omega);L^p(\Omega)}.
\end{gather}
Moreover, the continuity of the function $C_{y_k}$ with respect to the second argument and the compactness property of operator $F$, which means strong convergence $F(y_k,z)\to F(y_0,z)$ in $L^q(\Omega)$, lead to the conclusion
\begin{gather}
\label{1.21.3}
\lim_{k\to\infty} C_{y}(\lambda;\||z-z_k\||_{L^p(\Omega)})= C_{y}(\lambda;\||z-z_0\||_{L^p(\Omega)}),\quad\forall\,y\in Y_0,\\
\label{1.21.4}
\lim_{k\to\infty}\langle F(y_k,z),z-z_k\rangle_{L^q(\Omega);L^p(\Omega)}= \langle F(y_0,z),z-z_0\rangle_{L^q(\Omega);L^p(\Omega)}.
\end{gather}

As a result, using the properties \eqref{1.21.1}--\eqref{1.21.4}, we can pass to the limit in \eqref{1.21.0} as $k\to\infty$.  One gets
\begin{multline}
\langle F(y_0,z),z-z_0\rangle_{L^q(\Omega);L^p(\Omega)}-\langle \nu_0,z+B\nu_0\rangle_{L^q(\Omega);L^p(\Omega)}\\\label{1.21.5}\ge -\inf_{y\in Y_0}C_{y}(\lambda;\||z-z_0\||_{L^p(\Omega)}).
\end{multline}
Since $B\nu_0=-z_0$ by \eqref{1.20***}, we can rewrite the inequality \eqref{1.21.5} as follows
\begin{equation*}
\langle F(y_0,z)-\nu_0,z-z_0\rangle_{L^q(\Omega);L^p(\Omega)}\ge -\inf_{y\in Y_0}C_{y}(\lambda;\||z-z_0\||_{L^p(\Omega)}).
\end{equation*}
It remains to note that the operator $F$ is radially continuous for each $y\in Y_0$, and $F$ is the operator with u.s.b.v. (see Definitions \ref{Def.1} and \ref{Def.2}). Therefore, the last relation implies that $F(y_0,z_0)=\nu_0$ (see \cite[Theorem 1.1.2]{AMJA}) and, hence, equality \eqref{1.20***} finally takes the form
\begin{equation}\label{3*}
z_0+BF(y_0,z_0)=0.
\end{equation}
Thus, $z_0\in\mathcal{H}(y_0)$ and the triplet $(\mathcal{U}_0,y_0,z_0)$ is admissible for OCP \eqref{2.7}--\eqref{2.7c}. The proof is complete.
\end{proof}

\begin{remark}\label{Rem 1.7} In fact, as
follows from the proof of Theorem \ref{Th 2.8}, the set of admissible solutions $\Xi$ to the problem \eqref{2.7}--\eqref{2.7c} is sequentially $\tau$-compact.
To prove this fact it is enough to show the sequential compactness of the set of admissible controls with respect to the mentioned topology. Indeed, the set $\mathfrak{A}_{ad}$ is bounded in $L^\infty(\Omega; \mathbb{S}^N)$, so any sequence $\{A_k\}_{k\in\mathbb{N}}\subset \mathfrak{A}_{ad}$ is weakly-$\ast$ relatively compact in $L^\infty(\Omega; \mathbb{S}^N)$. This implies (see \eqref{1.2})  boundedness of $\{A^{\frac{1}{2}}_k\}_{k\in\mathbb{N}}$ in $BV(\Omega;\mathbb{S}^N)$ within a subsequence and, according to Remark \ref{Rem 3.9}, there exist an element $A$ and a subsequence, still denoted by the same index, such that $A^{\frac{1}{2}}_k\stackrel{\ast}{\rightharpoonup} A^{\frac{1}{2}}$ in $BV(\Omega;\mathbb{S}^N)$. It is easy to see, that correspondent solutions of \eqref{2.7}--\eqref{2.7a} $y_k=y(A_k)$, due to estimate \eqref{1.17}, form a weakly compact sequence in $W_0^{1,p}(\Omega)$  and sequence ${z_k=z(y_k)}_{k\in\mathbb{N}}$ is bounded in $L^p(\Omega)$ (see the proof of Theorem \ref{Th 2.8}), hence, it is weakly compact as well.
\end{remark}

Now we are in a position to prove the existence result for the original optimal control problem \eqref{2.7}--\eqref{2.7c}.
\begin{theorem}
\label{Th 2.9} Assume that
$\mathfrak{A}_{ad}\ne\emptyset$ and operators $B:L^q(\Omega)\to L^p(\Omega)$ and $F:W_0^{1,p}(\Omega)\times L^p(\Omega)\to L^q(\Omega)$ satisfy preconditions of Theorem~\ref{Th 2.8}. Then the optimal
control problem \eqref{2.7}--\eqref{2.7c} admits at least one
solution
\begin{gather*}
(A^{opt}, y^{opt},z^{opt})\in \Xi\subset L^\infty(\Omega;\mathbb{S}^N)\times W^{1,p}_0(\Omega)\times L^p(\Omega),\\
I(A^{opt}, y^{opt},z^{opt})=\inf_{(A, y,z)\in \Xi}I(A,y,z)
\end{gather*}
for each $f\in L^2(\Omega)$ and $z_d\in L^p(\Omega)$.
\end{theorem}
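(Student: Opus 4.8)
The plan is to apply the direct method of the calculus of variations, leaning on the two structural facts already secured: the admissible set $\Xi$ is nonempty (Proposition~\ref{prop 1.15}) and sequentially $\tau$-compact (Remark~\ref{Rem 1.7}). Since the cost functional $I$ is nonnegative on $\Xi$, the value $\mu:=\inf_{(A,y,z)\in\Xi}I(A,y,z)$ is a finite nonnegative number, and one may select a minimizing sequence $\{(A_k,y_k,z_k)\}_{k\in\mathbb{N}}\subset\Xi$ with $I(A_k,y_k,z_k)\to\mu$.

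First I would invoke the sequential $\tau$-compactness of $\Xi$ from Remark~\ref{Rem 1.7}: it furnishes a subsequence (not relabelled) and a triplet $(A^{opt},y^{opt},z^{opt})$ with $A_k^{1/2}\stackrel{\ast}{\rightharpoonup}(A^{opt})^{1/2}$ in $BV(\Omega;\mathbb{S}^N)$, $y_k\rightharpoonup y^{opt}$ in $W_0^{1,p}(\Omega)$ and $z_k\rightharpoonup z^{opt}$ in $L^p(\Omega)$, i.e. the subsequence $\tau$-converges to $(A^{opt},y^{opt},z^{opt})$. The sequential $\tau$-closedness of $\Xi$ established in Theorem~\ref{Th 2.8} then guarantees $(A^{opt},y^{opt},z^{opt})\in\Xi$; in particular $A^{opt}\in\mathfrak{A}_{ad}$, $y^{opt}=y(A^{opt})$ and $z^{opt}\in\mathcal{H}(y^{opt})$, so the limit triplet is admissible.

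It remains to verify that $I$ is sequentially lower semicontinuous with respect to $\tau$-convergence, and this is the only step requiring a genuine argument. Because $p\ge 2$ and $\Omega$ is bounded, one has the continuous embeddings $L^p(\Omega)\hookrightarrow L^2(\Omega)$ and, dually, $L^2(\Omega)\hookrightarrow L^q(\Omega)$; consequently the weak convergence $z_k\rightharpoonup z^{opt}$ in $L^p(\Omega)$ upgrades to $z_k\rightharpoonup z^{opt}$ weakly in $L^2(\Omega)$, since every $g\in L^2(\Omega)=(L^2(\Omega))^\ast$ also belongs to $L^q(\Omega)$ and hence tests the weak $L^p$-convergence. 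The functional $z\mapsto\int_\Omega|z-z_d|^2\,dx=\|z-z_d\|_{L^2(\Omega)}^2$ is convex and strongly continuous on $L^2(\Omega)$, hence weakly lower semicontinuous there, so that
\[
I(A^{opt},y^{opt},z^{opt})=\|z^{opt}-z_d\|_{L^2(\Omega)}^2\le\liminf_{k\to\infty}\|z_k-z_d\|_{L^2(\Omega)}^2=\liminf_{k\to\infty}I(A_k,y_k,z_k)=\mu.
\]

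Finally, since $(A^{opt},y^{opt},z^{opt})\in\Xi$ by the previous step, the reverse inequality $I(A^{opt},y^{opt},z^{opt})\ge\mu$ holds by definition of the infimum, whence $I(A^{opt},y^{opt},z^{opt})=\mu$ and the triplet is optimal. The main (indeed the only nontrivial) obstacle is the lower semicontinuity of the cost: all the compactness and closedness of the feasible set have already been delivered by Theorem~\ref{Th 2.8} and Remark~\ref{Rem 1.7}, so the crux reduces to the observation that weak $L^p$-convergence passes to weak $L^2$-convergence for $p\ge 2$ on a bounded domain, after which the convex quadratic tracking term is handled by standard weak lower semicontinuity.
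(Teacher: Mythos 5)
Your proof is correct and follows essentially the same route as the paper: the direct method with a minimizing sequence, sequential $\tau$-compactness of $\Xi$ from Remark~\ref{Rem 1.7}, $\tau$-closedness from Theorem~\ref{Th 2.8}, and weak lower semicontinuity of the tracking cost. The only difference is that you explicitly justify the lower semicontinuity step (via the embedding $L^p(\Omega)\hookrightarrow L^2(\Omega)$ on the bounded domain and convexity of the quadratic functional), which the paper simply asserts; this verification is sound.
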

\begin{proof}
Since the cost functional in \eqref{2.7} is bounded from below and, by Theorem~\ref{Th 1.1*}, the set of admissible solutions $\Xi$ is nonempty, there exists a sequence $\{(A_k,y_k,z_k)\}_{k\in \mathbb{N}}\subset \Xi$ such that
$$
\lim_{k\to\infty}I(A_k,y_k,z_k)=\inf_{(A,y,z)\in \Xi}I(A,y,z).
$$
As was mentioned in Remark \ref{Rem 1.7}, the set of admissible solutions $\Xi$ to the problem \eqref{2.7}--\eqref{2.7c} is sequentially $\tau$-compact. Hence, there exists an admissible solution $(A_0,y_0,z_0)$ such that, up to a subsequence, $(A_k,y_k,z_k)\,\stackrel{\tau}{\rightarrow}\, (A_0,y_0,z_0)$ as $k\to\infty$.
In order to show that $(A_0,y_0,z_0)$  is an optimal solution of problem \eqref{2.7}--\eqref{2.7c}, it remains to make  use of the lower semicontinuity of the cost functional with respect to the $\tau$-convergence
\begin{align*}
I(A_0,y_0,z_0)&\le\liminf_{m\to\infty}I(A_{k_m},y_{k_m},z_{k_m})\\
&=\lim_{k\to\infty}I(A_k,y_k,z_k)=\inf_{(A, y,z)\in \Xi}I(A,y,z).
\end{align*}
The proof is complete.
\end{proof}

\section{Regularization of OCP \eqref{2.7}--\eqref{2.7c}}
\label{Sec 3a}
 In this section we introduce the two-parameter regularization for a specific example of the considered optimization problem for the case when the terms $[A^{\frac{1}{2}}\nabla y]^2$, $|y|^2$ and $|z|^2$ may grow large. Indeed, this circumstance causes certain difficulties in the process of deriving optimality conditions. As a result, we show that in suitable topologies optimal solutions of regularized problems tend  to some optimal solutions of the initial problem.

 The Hammerstein equation \eqref{2.7c} in the initial optimal control problem \eqref{2.7}--\eqref{2.7c} is given in rather general framework, so, for the sake of convenience, in this section we choose operators $B$ and $F$ more specifically, however, preconditions of theorem Theorem~\ref{Th 2.9} are still satisfied.

 Let us take a linear bounded and positive operator $B:L^q(\Omega)\to L^p(\Omega)$ as follows
\begin{equation}\label{operator_B}
(Bu)(x)=\int_\Omega K(x,t)u(t)\,dt,
\end{equation}
where the kernel $K(x,t)$ is such that
\begin{equation}\label{K}
\int_\Omega\int_\Omega (K(x,t))^p\,dx\,dx\le C_1.
\end{equation}
\begin{remark}\label{B_prop}
In view of condition \eqref{K}, there exists a constant $C_2>0$ such that
 \[
 \int_\Omega\int_\Omega (K(x,t))^2\,dx\,dx\le C_2.
 \]
 Hence, the linear positive operator $B$, considered as a mapping from $L^2(\Omega)$ to $L^2(\Omega)$, still maintains positivity and boundedness properties.
\end{remark}

As for the nonlinear operator $F:W_0^{1,p}(\Omega)\times L^p(\Omega)\to L^q(\Omega)$, we specify it to the form $F(y,z)=|y|^{p-2}y+|z|^{p-2}z$. It is clear that in this case $F$ is strictly monotone, radially continuous with respect to second argument and compact with respect to the first argument. So, further we deal with  the following Hammerstein equation
$$
z(x)+\int_\Omega K(x,t)\left(|y(t)|^{p-2}y(t)+|z(t)|^{p-2}z(t)\right)\,dt=0\mbox{ in } \Omega.
$$
\begin{remark}\label{unique}
The above Hammerstein equation has a unique solution for each fixed $y\in W_0^1(\Omega)$. Indeed, if
$z_1, z_2\in L^p(\Omega)$ are two different solutions, corresponding  to $y$, then
$z_1-z_2=-B(|z_1|^{p-2}z_1-|z_2|^{p-2}z_2)$. Let us multiply this equality on $w_1-w_2\in L^q(\Omega)$, where $B^\ast w_1=z_1$ and $B^\ast w_2=z_2$. Positivity property of $B^\ast$ and strict monotonicity of $F(y,z)$ with respect to the second argument imply
\begin{align*}
0&\le\langle B^\ast(w_1-w_2),w_1-w_2\rangle_{L^p(\Omega);L^q(\Omega)}=\langle z_1-z_2,w_1-w_2\rangle_{L^p(\Omega);L^q(\Omega)}\\&=-\langle B(|z_1|^{p-2}z_1-|z_2|^{p-2}z_2),w_1-w_2\rangle_{L^p(\Omega);L^q(\Omega)}
\\&=-\langle |z_1|^{p-2}z_1-|z_2|^{p-2}z_2,B^\ast( w_1-w_2)\rangle_{L^q(\Omega);L^p(\Omega)}\\&=-\langle |z_1|^{p-2}z_1-|z_2|^{p-2}z_2,z_1-z_2\rangle_{L^q(\Omega);L^p(\Omega)}\le 0\quad \Longleftrightarrow\quad z_1=z_2.
\end{align*}
\end{remark}

Hence, the initial control problem takes the form
\begin{gather}\label{3.1*}
I(A,y,z)=\int_\Omega
|z(x)-z_d(x)|^2\,dx\rightarrow \min,\\
\label{3.1a*} \int_\Omega |A^{1/2}\nabla y|^{p-2}(A\nabla y,\nabla \varphi)_{\mathbb{R}^N}\,dx=\int_\Omega f \varphi\,dx,\  \forall\,\varphi\in W^{1,p}_0(\Omega),\\
\label{3.1b*} A\in\mathfrak{A}_{ad},\quad y\in W^{1,p}_0(\Omega),\\
\notag  \int_\Omega \left(\int_\Omega K(t,x)\left(|y(t)|^{p-2}y(t)+|z(t)|^{p-2}z(t)\right)dt\right)\phi \,dx\\\label{3.1c*}
+\int_\Omega z \phi \, dx =0,\;\forall\,\phi\in C_0^\infty(\Omega).
\end{gather}

 As was pointed out in \cite{Roubicek}, the anisotropic $p$-Laplacian $\Delta_{p}(A,y)$ provides an example of a quasi-linear operator in divergence form with a so-called degenerate nonlinearity for $p>2$. In this context we have non-differentiability of the state $y$ with respect to the matrix-valued control $A$. As follows from Theorem~\ref{Th 2.9}, this fact is not an obstacle to prove existence of considered optimal controls in the coefficients, but it causes certain difficulties when one is deriving the optimality conditions for this problem. To overcome this difficulty, in this section we introduce the  family of correspondent approximating control problems (see, for comparison, the approach of Casas and Fernandez \cite{CasasFernandez1991} for quasi-linear elliptic variational inequalities with a distributed control in the right hand side).

\begin{equation}\label{3a.1}
\text{Minimize }\quad I_{\e,k}(A,y,z)=
\int_\Omega |z(x)-z_d(x)|^2\, dx
\end{equation}
subject to the constraints
\begin{gather}
\label{3a.2}
\mathcal{A}_{\e,k}(A,y)=f \mbox{ in }\Omega,\\
 \label{3a.3}
y\in H_0^1(\Omega),\;A\in\mathfrak{A}_{ad},\\
\label{3a.4}
z+BF_{\e,k}(y,z)=0 \mbox{ in }\Omega.
\end{gather}
Here,
$\mathfrak{A}_{ad}$ is defined in \eqref{1.2},
$k\in \mathbb{N}$, $\e$ is a small parameter, which varies within a
strictly decreasing sequence of positive numbers converging to $0$
and
\begin{gather}
\label{3a.5}
\mathcal{A}_{\e,k}(A,y)=\mathrm{div} \left(\left[\e+\mathcal{F}_k\big(|A^\frac{1}{2}\nabla y|^2\big)\right]^{\frac{p-2}{2}}A\nabla y\right),\\
\label{3aa.5}
F_{\e,k}(y,z)=\left[\e+\mathcal{F}_k(| y|^2)\right]^{\frac{p-2}{2}} y+\left[\e+\mathcal{F}_k(| z|^2)\right]^{\frac{p-2}{2}} z
\end{gather}
where $\mathcal{F}_k:\mathbb{R}_{+}\to \mathbb{R}_{+}$ is a non-decreasing $C^1(\mathbb{R}_{+})$-function such that
\begin{equation*}
\begin{aligned}
\mathcal{F}_k(t)=t,\ \text{ if }\ t\in\left[0,k^2\right],\quad \mathcal{F}_k(t)=k^2+1,\ \text{ if }\ t>k^2+1,\quad\text{and}\\
\quad t\le \mathcal{F}_k(t)\le t+\delta,\ \text{ if }\ k^2\le t< k^2+1\quad\text{ for some }\ \delta\in (0,1).
\end{aligned}
\end{equation*}
The main goal of this section is to show that, for each $\e>0$ and $k\in \mathbb{N}$, the approximating optimal control problem \eqref{3a.1}--\eqref{3a.4} is well posed and its solutions can be considered as a reasonable approximation of optimal pairs to the original problem \eqref{3.1*}--\eqref{3.1c*}. To begin with, we establish a few auxiliary results concerning monotonicity and growth conditions for the regularized anisotropic $p$-Laplacian $\mathcal{A}_{\e,k}$ and $F_{\e,k}$ (see for comparison \cite{KuMa15}).
\begin{remark}\label{Rem 4.1}It is clear that the effect of such perturbations of $\mathcal{A}(A,y)$ and $F$ is their regularization around critical points and points where $|A^\frac{1}{2}\nabla y(x)|$, $y$ and $z$  become unbounded. In particular, if $y\in W^{1,p}_0(\Omega)$ and $\Omega^1_k(A,y):=\left\{x\in\Omega\ :\ |A^\frac{1}{2}\nabla y(x)|>\sqrt{k^2+1}\right\}$, then the following chain of inequalities
\begin{align*}
&|\Omega^1_k(A,y)|:=\int_{\Omega^1_k(A,y)} 1\,dx\le \frac{1}{\sqrt{k^2+1}}\int_{\Omega^1_k(A,y)}|A^\frac{1}{2}\nabla y(x)|\,dx\\
&\le \frac{|\Omega^1_k(A,y)|^{\frac{1}{q}}}{\sqrt{k^2+1}} \left(\int_{\Omega} |A^\frac{1}{2}\nabla y|^pdx\right)^{\frac{1}{p}}\le \frac{\|\xi_2\|_{L^\infty(\Omega)}\|y\|_{W^{1,p}_0(\Omega)}}{\sqrt{k^2+1}} |\Omega^1_{k}(A,y)|^{\frac{p-1}{p}}
\end{align*}
shows that the Lebesgue measure of the set $\Omega^1_k(A,y)$ satisfies the estimate
\begin{equation}
\label{3a.5.1***}
|\Omega^1_k(A,y)|\le \left(\frac{\|\xi_2\|_{L^\infty(\Omega)}}{\sqrt{k^2+1}}\right)^p\|y\|^p_{W^{1,p}_0(\Omega)}\le \|\xi_2\|^p_{L^\infty(\Omega)}\|y\|^p_{W^{1,p}_0(\Omega)}k^{-p},
\end{equation}
for any element $y\in W^{1,p}_0(\Omega)$.
 For
$\Omega_k^2(y) :=\left\{x\in\Omega :  |y(x)|>\sqrt{k^2+1}\right\}$ and $\Omega_k^3(z) :=\left\{x\in\Omega : |z(x)|>\sqrt{k^2+1}\right\}$ we obviously get similar estimates for all $y\in W^{1,p}_0(\Omega)$ and $z\in L^p(\Omega)$
\begin{equation}\label{6.18}
|\Omega^2_k(y)|\le  \|y\|^p_{W^{1,p}_0(\Omega)}k^{-p},\quad
|\Omega^3_k(z)|\le  \|z\|^p_{L^p(\Omega)}k^{-p},
\end{equation}
which mean that approximations $\mathcal{F}_k(|A^\frac{1}{2}\nabla y|^2)$, $\mathcal{F}_k(|y|^2)$, $\mathcal{F}_k(|z|^2)$ are essential on sets with  small Lebesgue measure.
\end{remark}

\begin{proposition}
\label{Prop 3a.2}
For every $A\in \mathfrak{A}_{ad}$, $k\in \mathbb{N}$, and $\e>0$, the operator $\mathcal{A}_{\e,k}
(A,\cdot):H^1_0(\Omega)\to H^{-1}(\Omega)$ is bounded, strictly monotone, coercive (in the sense of relation \eqref{1.5}) and semi-continuous.
\end{proposition}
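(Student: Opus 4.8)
The plan is to work throughout with the Dirichlet pairing associated with $\mathcal{A}_{\e,k}$, the exact analogue of \eqref{1.3}:
\[
\langle \mathcal{A}_{\e,k}(A,y),v\rangle_{H^{-1}(\Om);H^1_0(\Om)}=\int_\Om \big[\e+\mathcal{F}_k(|A^{1/2}\nabla y|^2)\big]^{\frac{p-2}{2}}(A\nabla y,\nabla v)_{\R^N}\,dx,\quad v\in H^1_0(\Om).
\]
The decisive observation~--- and the reason the target space is now $H^{-1}(\Om)$ rather than $W^{-1,q}(\Om)$~--- is that the weight $\omega_{\e,k}(y):=\big[\e+\mathcal{F}_k(|A^{1/2}\nabla y|^2)\big]^{\frac{p-2}{2}}$ is uniformly bounded from both sides: since $0\le\mathcal{F}_k\le k^2+1$ and $p\ge 2$, one has $\e^{\frac{p-2}{2}}\le\omega_{\e,k}(y)\le(\e+k^2+1)^{\frac{p-2}{2}}$ a.e.\ in $\Om$. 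Boundedness then follows from the Cauchy--Schwarz inequality together with the pointwise bound $|(A\nabla y,\nabla v)_{\R^N}|\le\xi_2^2|\nabla y|\,|\nabla v|$ and the upper bound on $\omega_{\e,k}$, while coercivity in the sense of \eqref{1.5} follows from the lower bound, since
\[
\langle \mathcal{A}_{\e,k}(A,y),y\rangle_{H^{-1}(\Om);H^1_0(\Om)}\ge \e^{\frac{p-2}{2}}\int_\Om|A^{1/2}\nabla y|^2\,dx\ge \e^{\frac{p-2}{2}}\alpha^2\|y\|^2_{H^1_0(\Om)},
\]
which tends to $+\infty$ as $\|y\|_{H^1_0(\Om)}\to\infty$.

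The hard part is strict monotonicity, which I would isolate as a pointwise statement about the vector field $\Phi_A(\xi):=\big[\e+\mathcal{F}_k(|A^{1/2}\xi|^2)\big]^{\frac{p-2}{2}}A\xi$ on $\R^N$. Writing $\Phi_A(\xi)=A^{1/2}\psi(A^{1/2}\xi)$ with $\psi(w):=h(|w|)\,w$ and $h(r):=\big[\e+\mathcal{F}_k(r^2)\big]^{\frac{p-2}{2}}$, and using the symmetry of $A^{1/2}$, the integrand of $\langle\mathcal{A}_{\e,k}(A,y)-\mathcal{A}_{\e,k}(A,v),y-v\rangle$ becomes $\big(\psi(w_1)-\psi(w_2),w_1-w_2\big)_{\R^N}$ with $w_1=A^{1/2}\nabla y$ and $w_2=A^{1/2}\nabla v$; it therefore suffices to prove that $\psi$ is strictly monotone on $\R^N$. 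Putting $r_i=|w_i|$ and using Cauchy--Schwarz $(w_1,w_2)_{\R^N}\le r_1r_2$, a direct expansion yields
\[
\big(\psi(w_1)-\psi(w_2),w_1-w_2\big)_{\R^N}\ge (r_1-r_2)\big(h(r_1)r_1-h(r_2)r_2\big).
\]
Since $\mathcal{F}_k$ is non-decreasing and $\tfrac{p-2}{2}\ge 0$, the scalar map $r\mapsto h(r)r$ is strictly increasing on $[0,\infty)$, so the right-hand side is nonnegative and strictly positive whenever $r_1\ne r_2$; in the remaining case $r_1=r_2=:r$ one has $\psi(w_1)-\psi(w_2)=h(r)(w_1-w_2)$, whence $\big(\psi(w_1)-\psi(w_2),w_1-w_2\big)_{\R^N}=h(r)|w_1-w_2|^2>0$ as soon as $w_1\ne w_2$, because $h\ge\e^{\frac{p-2}{2}}>0$. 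This gives strict monotonicity of $\psi$; the invertibility of $A^{1/2}$~--- guaranteed by the lower ellipticity bound $\xi_1^2|\eta|^2\le(\eta,A\eta)_{\R^N}$ with $\xi_1\ge\alpha>0$~--- transfers it to $\Phi_A$, and integration over $\Om$ then gives \eqref{1.4}--\eqref{1.4a}, equality forcing $\nabla y=\nabla v$ a.e.\ and hence $y=v$.

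Finally, for semi-continuity in the sense of \eqref{1.6} I would fix $y,v,w\in H^1_0(\Om)$ and let $t\to t_0$ in $\langle\mathcal{A}_{\e,k}(A,y+tv),w\rangle$. The integrand converges a.e.\ by the continuity of $\mathcal{F}_k$ and of the algebraic expressions, and for $t$ in a bounded neighbourhood of $t_0$ it is dominated by $(\e+k^2+1)^{\frac{p-2}{2}}\|\xi_2\|^2_{L^\infty(\Om)}\big(|\nabla y|+(|t_0|+1)|\nabla v|\big)|\nabla w|$, which belongs to $L^1(\Om)$ by the Cauchy--Schwarz inequality; Lebesgue's dominated convergence theorem then yields continuity of $t\mapsto\langle\mathcal{A}_{\e,k}(A,y+tv),w\rangle$. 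I expect the monotonicity step~--- specifically the reduction through $A^{1/2}$ and the separate treatment of the case $|w_1|=|w_2|$~--- to be the only genuinely delicate point, all remaining properties being routine consequences of the uniform two-sided bound on $\omega_{\e,k}$.
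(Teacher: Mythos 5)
Your proof is correct, and for boundedness, coercivity, and semi-continuity it coincides with the paper's argument: the same two-sided bound $\e^{\frac{p-2}{2}}\le\big[\e+\mathcal{F}_k(\cdot)\big]^{\frac{p-2}{2}}\le(\e+k^2+1)^{\frac{p-2}{2}}$, the same ellipticity bounds via $\xi_1,\xi_2$, and the same dominated-convergence argument for semi-continuity. The genuine difference is in the monotonicity step. The paper does not prove the pointwise inequality at all: it invokes an algebraic inequality from an earlier paper of Kupenko and Manzo (cited as Proposition 4.4 there), namely
\begin{equation*}
\left(\big(\e+\mathcal{F}_k(|a|^2)\big)^{\frac{p-2}{2}}a-
\big(\e+\mathcal{F}_k(|b|^2)\big)^{\frac{p-2}{2}}b\right)\cdot(a-b) \ \ge\ \e^{\frac{p-2}{2}}\,|a-b|^2,
\end{equation*}
applied with $a=A^{\frac12}\nabla y$, $b=A^{\frac12}\nabla v$, which yields \emph{strong} monotonicity, $\big<\mathcal{A}_{\e,k}(A,y)-\mathcal{A}_{\e,k}(A,v),y-v\big>\ge\alpha^2\e^{\frac{p-2}{2}}\|y-v\|^2_{H^1_0(\Omega)}$. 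You instead give a self-contained elementary proof: the same reduction $\Phi_A(\xi)=A^{1/2}\psi(A^{1/2}\xi)$, then Cauchy--Schwarz to reduce to the strictly increasing scalar map $r\mapsto h(r)r$, with the case $|w_1|=|w_2|$ handled separately. This correctly establishes \eqref{1.4}--\eqref{1.4a}, which is all the proposition claims, and has the merit of not relying on an external reference; what it does not deliver is the quantitative lower bound $\e^{\frac{p-2}{2}}|a-b|^2$ (your estimate degenerates to $(|w_1|-|w_2|)\big(h(|w_1|)|w_1|-h(|w_2|)|w_2|\big)$, which vanishes on the sphere $|w_1|=|w_2|$ even when $w_1\ne w_2$, forcing the case split). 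If you want the full strong monotonicity with your elementary method, use instead the exact identity
\begin{equation*}
\big(\psi(w_1)-\psi(w_2),w_1-w_2\big)_{\mathbb{R}^N}
=\tfrac12\big(h(|w_1|)+h(|w_2|)\big)|w_1-w_2|^2
+\tfrac12\big(h(|w_1|)-h(|w_2|)\big)\big(|w_1|^2-|w_2|^2\big),
\end{equation*}
in which both terms are nonnegative (the second since $h$ is non-decreasing) and the first is bounded below by $\e^{\frac{p-2}{2}}|w_1-w_2|^2$; this recovers the paper's cited inequality in one line and removes the case distinction.
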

\begin{proof} The proof is given in Appendix.
\end{proof}

\begin{proposition}\label{prop 3.4}
For every $k\in\mathbb{N}$ and $\e>0$ the operator $F_{\e,k}: H_0^1(\Omega)\times L^2(\Omega)\to L^2(\Omega)$ is bounded, $F_{\e,k}(y,\cdot):L^2(\Omega)\to L^2(\Omega)$ is strictly monotone and  radially continuous for every $y\in H_0^1(\Omega)$, and $F_{\e,k}(\cdot,z):H_0^1(\Omega)\to L^2(\Omega)$ is compact in the following sense: if $y_n\rightharpoonup y_0$ in $H_0^1(\Omega)$, then  $F_{\e,k}(y_n,z)\to F_{\e,k}(y_0,z)$ strongly in $L^2(\Omega)$ as $n\to\infty$.
\end{proposition}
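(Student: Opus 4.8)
The plan is to reduce all four assertions to the study of the single scalar function
$$ h(s):=\big(\e+\mathcal{F}_k(s^2)\big)^{\frac{p-2}{2}}\,s,\qquad s\in\mathbb{R}, $$
because, by the very definition \eqref{3aa.5}, one has $F_{\e,k}(y,z)=h\circ y+h\circ z$ in the sense of superposition (Nemytskii) operators. First I would record the consequences of the truncation. Since $0\le\mathcal{F}_k(t)\le k^2+1$ for every $t\ge0$ and $p\ge2$, the coefficient is squeezed between two positive constants,
$$ \e^{\frac{p-2}{2}}\le\big(\e+\mathcal{F}_k(s^2)\big)^{\frac{p-2}{2}}\le (\e+k^2+1)^{\frac{p-2}{2}}=:M_{\e,k}, $$
whence $h(0)=0$ and $|h(s)|\le M_{\e,k}|s|$, i.e. $h$ has linear growth with a bounded coefficient. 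Moreover $h\in C^1(\mathbb{R})$ because $\mathcal{F}_k\in C^1(\mathbb{R}_+)$, and a direct differentiation gives
$$ h'(s)=\big(\e+\mathcal{F}_k(s^2)\big)^{\frac{p-2}{2}}+(p-2)\big(\e+\mathcal{F}_k(s^2)\big)^{\frac{p-4}{2}}\mathcal{F}_k'(s^2)\,s^2. $$
Because $\mathcal{F}_k'\ge0$, the second term is non-negative, so $h'(s)\ge\e^{(p-2)/2}>0$ uniformly in $s$; and because $\mathcal{F}_k'(t)$ vanishes for $t>k^2+1$, the product $\mathcal{F}_k'(s^2)s^2$ stays bounded, so $h'$ is also bounded from above. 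Thus $h$ is strictly increasing with a uniform positive lower bound on its derivative, and at the same time globally Lipschitz.

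With these properties in hand, the first two claims are short. Boundedness of $F_{\e,k}:H_0^1(\Omega)\times L^2(\Omega)\to L^2(\Omega)$ follows from $|h(s)|\le M_{\e,k}|s|$ and the continuous embedding $H_0^1(\Omega)\hookrightarrow L^2(\Omega)$, which yield $\|F_{\e,k}(y,z)\|_{L^2(\Omega)}\le M_{\e,k}\big(\|y\|_{L^2(\Omega)}+\|z\|_{L^2(\Omega)}\big)$, finite and bounded on bounded sets. For strict monotonicity in the second argument I would observe that the $y$-term cancels, $F_{\e,k}(y,z_1)-F_{\e,k}(y,z_2)=h\circ z_1-h\circ z_2$, and then apply the mean value theorem together with the lower bound $h'\ge\e^{(p-2)/2}$ to obtain
$$ \langle F_{\e,k}(y,z_1)-F_{\e,k}(y,z_2),\,z_1-z_2\rangle_{L^2(\Omega)}\ge\e^{\frac{p-2}{2}}\,\|z_1-z_2\|_{L^2(\Omega)}^2, $$
which is strictly positive whenever $z_1\neq z_2$.

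Radial continuity and the compactness property I would both derive from the fact that the global Lipschitz bound $|h(s_1)-h(s_2)|\le L_{\e,k}|s_1-s_2|$ makes the Nemytskii operator $N_h:L^2(\Omega)\to L^2(\Omega)$, $N_h(w)=h\circ w$, Lipschitz continuous, $\|N_h(w_1)-N_h(w_2)\|_{L^2(\Omega)}\le L_{\e,k}\|w_1-w_2\|_{L^2(\Omega)}$. For radial continuity with $y$ fixed, the $y$-part of $F_{\e,k}(y,z_1+\tau z_2)$ is independent of $\tau$, while $\tau\mapsto N_h(z_1+\tau z_2)$ is continuous from $[0,\sigma]$ into $L^2(\Omega)$ for any $\sigma>0$; pairing with $z_2$ then gives continuity of $\tau\mapsto\langle F_{\e,k}(y,z_1+\tau z_2),z_2\rangle_{L^2(\Omega)}$, as required by Definition \ref{Def.1}. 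For compactness in the first argument I would invoke the Rellich--Kondrachov theorem: as $\Omega$ is bounded with Lipschitz boundary, $y_n\rightharpoonup y_0$ in $H_0^1(\Omega)$ implies $y_n\to y_0$ strongly in $L^2(\Omega)$, hence $N_h(y_n)\to N_h(y_0)$ in $L^2(\Omega)$ by Lipschitz continuity; since $F_{\e,k}(y_n,z)-F_{\e,k}(y_0,z)=N_h(y_n)-N_h(y_0)$, this yields $F_{\e,k}(y_n,z)\to F_{\e,k}(y_0,z)$ strongly in $L^2(\Omega)$.

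There is no genuine analytic obstacle here; the content of the statement is that the regularization has removed the difficulties present in the original $p$-growth nonlinearity. Accordingly, the one point I would be careful about is the structural observation that the truncated coefficient $(\e+\mathcal{F}_k(\cdot))^{(p-2)/2}$ is at once bounded below by $\e^{(p-2)/2}$ (giving uniform strict monotonicity) and bounded above (giving linear growth and the Lipschitz estimate), together with the fact that $\mathcal{F}_k'$ is supported in $[0,k^2+1]$ (which is what keeps $h'$ bounded for large $s$). This is exactly the gain of working with $\mathcal{F}_k$ and $\e>0$, and it is the reason the operator is now well defined and well behaved on the Hilbert pair $H_0^1(\Omega)\times L^2(\Omega)$ rather than on $W_0^{1,p}(\Omega)\times L^p(\Omega)$.
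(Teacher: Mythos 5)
Your proof is correct, and it follows a genuinely different route from the paper's. The paper dispatches boundedness, strict monotonicity and radial continuity by reference to the proof of Proposition \ref{Prop 3a.2}, whose monotonicity step rests on an algebraic inequality for the regularized vector field cited from \cite{KuMa151}, and it proves the compactness property measure-theoretically: strong $L^2(\Omega)$ convergence of $y_n$, passage to an a.e.\ convergent subsequence, the uniform bound $2(\e+k^2+1)^{p-2}(|y_n|^2+|y_0|^2)$ yielding equi-integrability, and then the Vitali-type Lemma \ref{Th_1.9}. You instead reduce all four assertions to the scalar function $h(s)=(\e+\mathcal{F}_k(s^2))^{\frac{p-2}{2}}s$ and the two derivative bounds $\e^{\frac{p-2}{2}}\le h'(s)\le L_{\e,k}$: the lower bound, via the mean value theorem, reproves in the scalar case exactly the inequality the paper imports, and the upper bound makes the Nemytskii operator $N_h$ globally Lipschitz on $L^2(\Omega)$, so that compactness becomes an immediate consequence of Rellich--Kondrachov, with no subsequence extraction and no equi-integrability argument. (The one point you use tacitly is that $\mathcal{F}_k'$, being continuous on the compact transition region $[k^2,k^2+1]$ and zero beyond it, is bounded; this is what keeps $h'$ finite and deserves a sentence.) What your route buys is brevity, explicit constants, and avoidance of the subsequence bookkeeping implicit in the paper's argument; what the paper's route buys is uniformity of method, since the equi-integrability/Vitali template is the one that survives when the Lipschitz bound is lost, namely for the unregularized nonlinearity $|y|^{p-2}y$ treated in Proposition \ref{prop 4.1}, and the paper reuses the same scheme there.
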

\begin{proof} The proof is given in Appendix.
\end{proof}

  Using above results we arrive at the following assertion.
\begin{proposition}\label{prop 1.15*}
 The set of admissible solutions to problem \eqref{3a.1}--\eqref{3a.4}
\begin{multline*}
\Xi_{\e,k}=\big\{(A,y,z)\in L^\infty(\Omega;\mathbb{S}^N)\times H_0^1(\Omega)\times L^2(\Omega)|\\ (A,y,z) \mbox{ are related by \eqref{3a.2}--\eqref{3a.4}}\big\}
\end{multline*}
is nonempty for every $f\in L^2(\Omega)$.
\end{proposition}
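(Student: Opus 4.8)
The plan is to follow verbatim the structure of the proof of Proposition~\ref{prop 1.15}: fix an admissible control, solve the regularized state equation to obtain a (unique) $y$, and then invoke the abstract Hammerstein existence result, Theorem~\ref{Th 1.1*}, to produce at least one $z$. The only novelty is that the truncation built into $\mathcal{F}_k$ renders all the relevant coefficients bounded (squeezed between $\e^{(p-2)/2}$ and $(\e+k^2+1)^{(p-2)/2}$), so the natural functional setting shifts from $(W_0^{1,p},L^p,L^q)$ to $(H_0^1(\Omega),L^2(\Omega),L^2(\Omega))$, and the real work is to check that the hypotheses of Theorem~\ref{Th 1.1*} survive this change of spaces, for which Proposition~\ref{prop 3.4} and Remark~\ref{B_prop} are tailored.

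First I would fix an arbitrary $A\in\mathfrak{A}_{ad}$. By Proposition~\ref{Prop 3a.2} the operator $\mathcal{A}_{\e,k}(A,\cdot):H_0^1(\Omega)\to H^{-1}(\Omega)$ is bounded, strictly monotone, coercive and semi-continuous; since $f\in L^2(\Omega)\hookrightarrow H^{-1}(\Omega)$, the standard existence and uniqueness theorem for coercive, semi-continuous, strictly monotone operators (cf. \cite{Lions69,Roubicek}) yields a unique weak solution $y=y_{\e,k}(A)\in H_0^1(\Omega)$ of \eqref{3a.2}--\eqref{3a.3}. Coercivity moreover furnishes an a priori bound $\|y\|_{H_0^1(\Omega)}\le C(\e,k,f)$ independent of the particular $A$, so the admissible states range over a bounded set $Y_0\subset H_0^1(\Omega)$.

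It then remains to solve \eqref{3a.4} for this fixed $y$. Here I take $Z=Z^\ast=L^2(\Omega)$ and $Y=H_0^1(\Omega)$. By Remark~\ref{B_prop} the operator $B$ is a linear, continuous, positive self-map of $L^2(\Omega)$, and by Proposition~\ref{prop 3.4} the operator $F_{\e,k}(y,\cdot)$ is bounded, strictly monotone and radially continuous, hence of u.s.b.v. with $C_y\equiv 0$. The one coercivity inequality of Theorem~\ref{Th 1.1*} still to verify (with $g=0$, so $B^{-1}_r g=0$) is $\langle F_{\e,k}(y,z),z\rangle_{L^2;L^2}\ge 0$ for $\|z\|_{L^2(\Omega)}$ large. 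Using $\e+\mathcal{F}_k(|z|^2)\ge\e$ on the $z$-term and $\e+\mathcal{F}_k(|y|^2)\le\e+k^2+1$ on the $y$-term, one gets
\begin{equation*}
\langle F_{\e,k}(y,z),z\rangle_{L^2;L^2}\ge \e^{\frac{p-2}{2}}\|z\|_{L^2(\Omega)}^2-(\e+k^2+1)^{\frac{p-2}{2}}\|y\|_{L^2(\Omega)}\|z\|_{L^2(\Omega)},
\end{equation*}
whose right-hand side is nonnegative once $\|z\|_{L^2(\Omega)}>\lambda:=\big(1+(k^2+1)/\e\big)^{\frac{p-2}{2}}\|y\|_{L^2(\Omega)}$, and $\lambda$ is finite since $y$ is already fixed and bounded. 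Thus all hypotheses of Theorem~\ref{Th 1.1*} hold, the set $\mathcal{H}_{\e,k}(y)=\{z\in L^2(\Omega):\ z+BF_{\e,k}(y,z)=0\}$ is nonempty (indeed weakly compact), and any $z\in\mathcal{H}_{\e,k}(y)$ completes an admissible triple $(A,y,z)\in\Xi_{\e,k}$.

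I expect the only genuine point needing attention — as opposed to routine transcription — to be checking that $B$, now regarded as a self-map of $L^2(\Omega)$, still meets the positivity and right-inverse requirements of Theorem~\ref{Th 1.1*}: these are exactly the content of the standing assumption recorded at the start of this section, namely that the specialized operators $B$ and $F$ continue to satisfy the preconditions of Theorem~\ref{Th 2.9}. Granting that, every remaining step merely repeats the proof of Proposition~\ref{prop 1.15}, and the nonemptiness of $\Xi_{\e,k}$ for each $f\in L^2(\Omega)$ follows.
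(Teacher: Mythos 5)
Your proposal is correct and follows essentially the same route as the paper's own proof: fix $A\in\mathfrak{A}_{ad}$, obtain the unique state $y\in H_0^1(\Omega)$ from Proposition~\ref{Prop 3a.2} together with the a priori bound $\|y\|_{H_0^1(\Omega)}\le \e^{\frac{2-p}{2}}\alpha^{-2}\|f\|_{L^2(\Omega)}$, and then apply Theorem~\ref{Th 1.1*} in the shifted setting $Z=Z^\ast=L^2(\Omega)$ (using Remark~\ref{B_prop} and Proposition~\ref{prop 3.4}) to get a nonempty, weakly compact solution set $\mathcal{H}_{\e,k}(y)$ of the regularized Hammerstein equation. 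In fact your explicit verification of the coercivity inequality, with $\lambda=\big(1+(k^2+1)/\e\big)^{\frac{p-2}{2}}\|y\|_{L^2(\Omega)}$, is more detailed than the paper's, which merely asserts the existence of such a $\lambda$.
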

\begin{proof}
Properties of the operator $\mathcal{A}_{\e,k}(A,y)$ given by Proposition \ref{Prop 3a.2} imply, that for every fixed $\e>0$ and $k\in\mathbb{N}$ boundary value problem \eqref{3a.2}--\eqref{3a.3} admits a unique weak solution $y_A=y(A)\in H_0^1(\Omega)$ for every $A\in \mathfrak{A}_{ad}$ and $f\in L^2(\Omega)$. Moreover, the following estimate takes place
 \begin{multline*}
\e^{\frac{p-2}{2}}\alpha^2\|y\|^2_{H_0^1(\Omega)}\stackrel{\text{by \eqref{3.4.new}}}{\le}\langle \mathcal{A}_{\e,k} y,y\rangle_{H^{-1}(\Omega);H_0^1(\Omega)}\\
=\langle f,y\rangle_{H^{-1}(\Omega);H_0^1(\Omega)}\le\|f\|_{L^2(\Omega)}\|y\|_{H_0^1(\Omega)}.
 \end{multline*}
 Hence, we have
 $ \sup_{A\in \mathfrak{A}_{ad}}\|y_A\|_{H_0^1(\Omega)}\le\ds\frac{ \e^{\frac{2-p}{2}}}{\alpha^{2}}\|f\|_{L^2(\Omega)}$ .
And what is more, there exists $\lambda>0$ such that for any  $y\in H_0^1(\Omega): \|y\|_{H_0^1(\Omega)}\le \e^{\frac{2-p}{2}}\alpha^{-2}\|f\|_{L^2(\Omega)}$ and all $\|z\|_{L^2(\Omega)}>\lambda$ the inequality
$$\langle F_{\e,k}(y,z),z\rangle_{L^2(\Omega);L^2(\Omega)}\ge 0$$ holds true.
Since the operator $B$, given by \eqref{operator_B},  maps $L^2(\Omega)\to L^2(\Omega)$, it follows from Theorem \ref{Th 1.1*} that the set
$$
\mathcal{H}_{\e,k}(y_A)=\{z\in L^2(\Omega):\;z+BF_{\e,k}(y_A,z)=0\ \text{in the sense of distributions }\}
$$
is non-empty and weakly compact.
\end{proof}
\begin{definition}\label{def_tau1}
 We say that a sequence of triplets
$\{(A_k,y_k,z_k)\}_{k\in\mathbb{N}}$ from the space $L^\infty(\Omega;\mathbb{S}^{N})\times H^1_0(\Omega)\times L^2(\Omega)$
$\tau_1$-converges to a triplet $(A_0,y_0,z_0)$ if $A_k^{\frac{1}{2}}\stackrel{\ast}{\rightharpoonup} A_0^{\frac{1}{2}}$ in $BV(\Omega;\mathbb{S}^N)$,
$y_k\rightharpoonup y_0$ in $H_0^{1}(\Omega)$ and $z_k\rightharpoonup z_0$ in $L^2(\Omega)$.
\end{definition}

By analogy with Theorem \ref{Th 2.8} it is easy to show, that the set of admissible triplets $\Xi_{\e,k}$ to the optimal control problem  \eqref{3a.1}--\eqref{3a.4} is sequentially closed and compact with respect to $\tau_1$-topology of the set $L^\infty(\Omega;\mathbb{S}^N)\times H_0^{1}(\Omega)\times L^2(\Omega)$.
 We conclude the section with the following result.
 \begin{theorem}
\label{Th 3a.11} For every $\e>0$ and every integer $k\in \mathbb{N}$, the optimal control problem \eqref{3a.1}--\eqref{3a.4} is solvable, i.e. there exists a triplet $(A^{opt}_{\e,k},y^{opt}_{\e,k},z^{opt}_{\e,k})\in\Xi_{\e,k}$ such that
\[
I_{\e,k}(A^{opt}_{\e,k},y^{opt}_{\e,k},z^{opt}_{\e,k})=\inf_{(A,y,z)\in\Xi_{\e,k}}I_{\e,k}(A,y,z).
\]
\end{theorem}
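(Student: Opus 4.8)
The plan is to apply the direct method of the calculus of variations in exactly the same way as in the proof of Theorem~\ref{Th 2.9}, the only genuinely new ingredient being the weak lower semicontinuity of the quadratic cost in the $L^2$-setting rather than the $L^p$-setting.

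First I would record that the functional $I_{\e,k}(A,y,z)=\int_\Omega|z-z_d|^2\,dx$ is non-negative, hence bounded from below, and that by Proposition~\ref{prop 1.15*} the admissible set $\Xi_{\e,k}$ is non-empty for every $f\in L^2(\Omega)$. Consequently the value $\mu:=\inf_{(A,y,z)\in\Xi_{\e,k}}I_{\e,k}(A,y,z)$ is a finite non-negative number, and there exists a minimizing sequence $\{(A_j,y_j,z_j)\}_{j\in\mathbb{N}}\subset\Xi_{\e,k}$ with $I_{\e,k}(A_j,y_j,z_j)\to\mu$.

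Next I would extract a $\tau_1$-convergent subsequence. As noted immediately before the statement, and in full analogy with Theorem~\ref{Th 2.8} and Remark~\ref{Rem 1.7}, the set $\Xi_{\e,k}$ is sequentially $\tau_1$-compact. Concretely, since $A_j\in\mathfrak{A}_{ad}$ the family $\{A_j^{1/2}\}$ is bounded in $BV(\Omega;\mathbb{S}^N)$, so along a subsequence $A_j^{1/2}\stackrel{\ast}{\rightharpoonup}A_0^{1/2}$; the coercivity estimate established in the proof of Proposition~\ref{prop 1.15*} bounds $\{y_j\}$ in $H_0^1(\Omega)$, yielding $y_j\rightharpoonup y_0$; and the a priori bound $\|z_j\|_{L^2(\Omega)}\le\lambda$ coming from the Hammerstein structure bounds $\{z_j\}$ in $L^2(\Omega)$, so $z_j\rightharpoonup z_0$ up to a further subsequence. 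Sequential $\tau_1$-closedness of $\Xi_{\e,k}$ then guarantees that the limit triplet $(A_0,y_0,z_0)$ belongs to $\Xi_{\e,k}$.

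Finally I would invoke lower semicontinuity of the cost. Since $z_j\rightharpoonup z_0$ weakly in $L^2(\Omega)$ and the map $z\mapsto\|z-z_d\|_{L^2(\Omega)}^2$ is convex and strongly continuous, it is weakly sequentially lower semicontinuous, whence
\begin{equation*}
I_{\e,k}(A_0,y_0,z_0)\le\liminf_{j\to\infty}I_{\e,k}(A_j,y_j,z_j)=\mu.
\end{equation*}
Combined with $(A_0,y_0,z_0)\in\Xi_{\e,k}$, this forces $I_{\e,k}(A_0,y_0,z_0)=\mu$, so $(A_0,y_0,z_0)$ is the desired optimal triplet. I do not anticipate a serious obstacle: essentially all of the analytic difficulty has been front-loaded into the $\tau_1$-compactness and $\tau_1$-closedness of $\Xi_{\e,k}$. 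The one point deserving care is verifying that the minimizing sequence is bounded in each of the three component spaces so that weak (weak-$\ast$) compactness applies, which follows respectively from the uniform $BV$-bound on $\mathfrak{A}_{ad}$, the $\e$-dependent coercivity estimate for $\mathcal{A}_{\e,k}$, and the Hammerstein a priori bound $\|z_j\|_{L^2(\Omega)}\le\lambda$.
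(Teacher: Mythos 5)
Your proof is correct and follows essentially the same route as the paper's: direct method with a minimizing sequence, boundedness of that sequence in $BV(\Omega;\mathbb{S}^N)\times H^1_0(\Omega)\times L^2(\Omega)$, sequential $\tau_1$-compactness/closedness of $\Xi_{\e,k}$, and weak lower semicontinuity of the quadratic cost. The only (immaterial) difference is the source of the $L^2$-bound on the $z$-component: the paper reads it off the boundedness of the cost along the minimizing sequence, via $\|z_j\|_{L^2(\Omega)}\le\|z_j-z_d\|_{L^2(\Omega)}+\|z_d\|_{L^2(\Omega)}$, whereas you invoke the Hammerstein a priori bound $\|z_j\|_{L^2(\Omega)}\le\lambda$ (with $\lambda$ controlled by the uniform $H^1_0$-estimate on the states); both are valid.
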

\begin{proof}
Since the cost functional in \eqref{3a.1} is bounded from below and the set of admissible solutions $\Xi_{\e,k}$ is nonempty, it follows that there exists a minimizing sequence $\{(A_n,y_n,z_n)\}_{n\in \mathbb{N}}\subset \Xi_{\e,k}$ such that
$$
\lim_{n\to\infty}I_{\e,k}(A_n,y_n,z_n)=\inf_{(A,y,z)\in \Xi_{\e,k}}I(A,y,z).
$$
Hence, there exists a constant $C>0$ such that
$$
\|z_n\|_{L^2(\Omega)}\le \|z_n-z_d\|_{L^2(\Omega)}+\|z_d\|_{L^2(\Omega)}\le C.
$$
Moreover, in view of definition of the set $\mathfrak{A}_{ad}$,  we have
\begin{align*}
&\sup_{n\in \mathbb{N}}\left[ \|A^\frac{1}{2}_n\|_{BV(\Omega;\mathbb{S}^N)} +\|y_n\|_{H^1_0(\Omega)}+\|z_n\|_{L^2(\Omega)}\right]\\&\le \sqrt{N}\|\xi_2\|_{L^1(\Omega)}+\gamma+ \ds\frac{\e^{\frac{2-p}{2}}}{\alpha^{2}}\|f\|_{L^2(\Omega)}+C.
\end{align*}
Hence, there exists a subsequence $\left\{(A_{n_i},y_{n_i},z_{n_i})\right\}_{i\in \mathbb{N}}$ and a triplet $(A,y,z)\in L^\infty(\Omega)\times H^1_0(\Omega)\times L^2(\Omega)$ such that
\begin{align}
\notag
y_{n_i}\rightharpoonup y\ \text{ in $H^1_0(\Omega)$},&\quad y_{n_i}\rightarrow y\ \text{ in $L^2(\Omega)$},\quad z_{n_i}\rightharpoonup z\ \text{ in $L^2(\Omega)$},\\\label{3a.9.1}
A^\frac{1}{2}_{n_i}\rightarrow A^\frac{1}{2}\ \text{ in $L^1(\Omega)$},&\quad
A^\frac{1}{2}_{n_i}\rightarrow A^\frac{1}{2}\ \text{ almost everywhere in }\ \Omega,\\\notag
\gamma\ge \liminf_{i \to \infty}\int_\Omega& |D A^\frac{1}{2}_{n_i}|  \ge
\int_\Omega|D A^\frac{1}{2}|.
\end{align} In view of $\tau_1$-closedness of the set $\Xi_{\e,k}$,  we have $(A,y,z)\in\Xi_{\e,k}$. It remains to make  use of the lower semicontinuity of the cost functional with respect to the $\tau_1$-convergence
\begin{multline*}
I_{\e,k}(A,y,z)\le\liminf_{i\to\infty}I_{\e,k}(A_{n_i},y_{n_i},z_{n_i})\\
=\lim_{n\to\infty}I_{\e,k}(A_n,y_n,z_n)=\inf_{(A, y,z)\in \Xi_{\e,k}}I_{\e,k}(A,y,z).
\end{multline*}
The proof is complete.
\end{proof}

\section{Asymptotic Analysis of the Approximating OCP \eqref{3a.1}--\eqref{3a.4}}
\label{Sec 4a}

Our main intention in this section is to show that some optimal solutions to the original OCP \eqref{3.1*}--\eqref{3.1c*} can be attained (in certain sense) by optimal solutions to the approximating problems \eqref{3a.1}--\eqref{3a.4}.
With that in mind, we make use of the concept of variational convergence of constrained minimization problems (see \cite{KogutLeugering2011}). In order to study the asymptotic behaviour of a family of OCPs \eqref{3a.1}--\eqref{3a.4},
the passage to the limit in relations \eqref{3a.1}--\eqref{3a.4} as $\e\to 0$ and $k\to\infty$ has to be realized.
The expression \textquotedblleft passing to the limit\textquotedblright\ means that we have to find a kind
of \textquotedblleft limit cost functional\textquotedblright\ $I$ and \textquotedblleft limit set of constraints\textquotedblright\ $\Xi$
with a clearly defined structure such that the limit object
$\left<\inf_{(A,y,z)\in\Xi} I(A,y,z)\right>$ to the family \eqref{3a.1}--\eqref{3a.4} could be interpreted
as some~OCP.

Further we use the folowing notation
\begin{equation}\label{6.1}
\|y\|_{A,\e,k}=\left(\int_\Omega(\e+\mathcal{F}(|A^\frac{1}{2}\nabla y|^2))^\frac{p-2}{2}|A^\frac{1}{2}\nabla y|^2\,dx\right)^{1/p}.
\end{equation}
\begin{proposition}\label{prop 6.1} Let $A\in \mathfrak{A}_{ad}$, $k\in \mathbb{N}$, and $\e>0$ be given. Then, for arbitrary $g\in L^2(\Omega)$ and $y\in H^1_0(\Omega)$, we have
\begin{equation}
\label{3a.6.1}
\left|\int_\Omega g y\,dx\right|\le
C_\Omega\alpha^{-1}\|g\|_{L^2(\Omega)}\left(|\Omega|^\frac{p-2}{2p}\|y\|_{A,\e,k}+k^\frac{2-p}{2}\|y\|^{\frac{p}{2}}_{A,\e,k}\right).
\end{equation}
\end{proposition}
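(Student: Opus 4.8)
The plan is to reduce the claim, via Cauchy--Schwarz and the Poincaré inequality, to a single estimate comparing $\int_\Omega |A^{1/2}\nabla y|^2\,dx$ with the weighted quantity $\|y\|_{A,\e,k}^p$. First, since $g,y\in L^2(\Omega)$, Cauchy--Schwarz gives $\left|\int_\Omega g y\,dx\right|\le \|g\|_{L^2(\Omega)}\|y\|_{L^2(\Omega)}$, and by the Poincaré inequality $\|y\|_{L^2(\Omega)}\le C_\Omega\|\nabla y\|_{L^2(\Omega)^N}$ for $y\in H^1_0(\Omega)$. The lower ellipticity bound in the definition \eqref{1.2} of $\mathfrak{A}_{ad}$, namely $(\eta,A\eta)_{\mathbb{R}^N}\ge\xi_1^2|\eta|^2\ge\alpha^2|\eta|^2$, yields $|A^{1/2}\nabla y|^2\ge\alpha^2|\nabla y|^2$ a.e., hence $\|\nabla y\|_{L^2(\Omega)^N}\le\alpha^{-1}\left(\int_\Omega|A^{1/2}\nabla y|^2\,dx\right)^{1/2}$. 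Combining these three steps, it remains to establish
\[
\left(\int_\Omega|A^{1/2}\nabla y|^2\,dx\right)^{1/2}\le|\Omega|^{\frac{p-2}{2p}}\|y\|_{A,\e,k}+k^{\frac{2-p}{2}}\|y\|_{A,\e,k}^{\frac{p}{2}}.
\]

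For the core estimate I would write $w:=|A^{1/2}\nabla y|^2\ge0$ and split $\Omega$ into $S:=\{x\in\Omega:w(x)\le k^2\}$ and its complement $S^c$. On $S$ the cut-off satisfies $\mathcal{F}_k(w)=w$, so since $p\ge2$ and $\e>0$ we have $(\e+\mathcal{F}_k(w))^{\frac{p-2}{2}}\ge w^{\frac{p-2}{2}}$ and thus $w^{p/2}\le(\e+\mathcal{F}_k(w))^{\frac{p-2}{2}}w$ pointwise; integrating and applying H\"older's inequality with exponents $p/2$ and $p/(p-2)$ gives
\[
\int_S w\,dx\le\left(\int_S w^{p/2}\,dx\right)^{2/p}|S|^{\frac{p-2}{p}}\le\|y\|_{A,\e,k}^2\,|\Omega|^{\frac{p-2}{p}}.
\]
On $S^c$ one has $\mathcal{F}_k(w)\ge k^2$ (this uses both $\mathcal{F}_k(t)\ge t$ on $[k^2,k^2+1)$ and $\mathcal{F}_k(t)=k^2+1$ beyond), whence $(\e+\mathcal{F}_k(w))^{\frac{p-2}{2}}\ge k^{p-2}$ and therefore
\[
k^{p-2}\int_{S^c}w\,dx\le\int_{S^c}(\e+\mathcal{F}_k(w))^{\frac{p-2}{2}}w\,dx\le\|y\|_{A,\e,k}^p,
\]
that is $\int_{S^c}w\,dx\le k^{2-p}\|y\|_{A,\e,k}^p$.

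Finally I would add the two contributions and use the subadditivity $\sqrt{a+b}\le\sqrt a+\sqrt b$:
\[
\left(\int_\Omega w\,dx\right)^{1/2}\le\left(\int_S w\,dx\right)^{1/2}+\left(\int_{S^c}w\,dx\right)^{1/2}\le|\Omega|^{\frac{p-2}{2p}}\|y\|_{A,\e,k}+k^{\frac{2-p}{2}}\|y\|_{A,\e,k}^{p/2},
\]
which, chained with the reduction of the first paragraph, produces exactly \eqref{3a.6.1} with $C_\Omega$ the Poincaré constant. The only genuinely delicate point is the two-set splitting: one must verify that the truncation $\mathcal{F}_k$ gives the clean lower bound $\mathcal{F}_k(w)\ge k^2$ on $\{w>k^2\}$ across the transition layer $k^2\le w<k^2+1$, and that the H\"older exponents on $S$ are arranged so that the weight $(\e+w)^{\frac{p-2}{2}}$ absorbs precisely the power $w^{p/2}$; the remaining ingredients (Cauchy--Schwarz, Poincaré, ellipticity) are routine. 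Note that the argument is uniform in $\e$, so no lower bound on $\e$ is required.
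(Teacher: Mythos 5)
Your proof is correct and follows essentially the same route as the paper's: Cauchy--Schwarz plus the Friedrichs/Poincar\'e inequality and the ellipticity bound $\xi_1\ge\alpha$ reduce the claim to estimating $\int_\Omega|A^{1/2}\nabla y|^2\,dx$, which is then handled by splitting the domain according to the size of $|A^{1/2}\nabla y|^2$ relative to the truncation level, with H\"older's inequality on the small set and the lower bound $(\e+\mathcal{F}_k)^{(p-2)/2}\ge k^{p-2}$ on the large set. The only (cosmetic) difference is that you cut at $w\le k^2$, where $\mathcal{F}_k(w)=w$ holds exactly, while the paper cuts at $|A^{1/2}\nabla y|>\sqrt{k^2+1}$ and glosses over the transition layer; your choice is in fact slightly cleaner.
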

\begin{proof}The proof is given in Appendix.
\end{proof}
\begin{remark}
\label{Rem 3a.1.1}
For any fixed admissible control $A\in\mathfrak{A}_{ad}$ and an arbitrary element
$y^\ast\in H^1_0(\Omega)$ such that  $\|y^\ast\|_{A,\e,k}\le C<+\infty$ with a constant $C>0$ independent of $\e>0$ and $k\in \mathbb{N}$ for the set
$\Omega_k(A,y^\ast):=\{x\in\Omega: \;|A^\frac{1}{2}(x)\nabla y^\ast(x)|\ge\sqrt{k^2+1}\}$ we have
\begin{multline*}
|\Omega_k(A,y^\ast)| :=\int_{\Omega_k(A,y^\ast)} 1\,dx\le \frac{1}{\sqrt{k^2+1}}\int_{\Omega_k(A,y^\ast)}|A^\frac{1}{2}\nabla y^\ast(x)|\,dx\\
\le \frac{|\Omega_k(A,y^\ast)|^{\frac{1}{2}}}{k} \left(\int_{\Omega_k(A,y^\ast)} |A^\frac{1}{2}\nabla y^\ast|^2\,dx\right)^{\frac{1}{2}}\\
=\frac{|\Omega_k(A,y^\ast)|^{\frac{1}{2}}}{k(\e+k^2+1)^{\frac{p-2}{4}} }\left(\int_{\Omega_k(A,y^\ast)}\left(\e+\mathcal{F}_k(|A^\frac{1}{2}\nabla y^\ast|^2)\right)^{\frac{p-2}{2}} |A^\frac{1}{2}\nabla y^\ast|^2\,dx\right)^{\frac{1}{2}}\\
\le \frac{1}{k^\frac{p}{2}}\, |\Omega_k(A,y^\ast)|^{\frac{1}{2}}\, \|y^\ast\|^{\frac{p}{2}}_{A,\e,k}.
\end{multline*}
Hence, the Lebesgue measure of the set $\Omega_k(A,y^\ast)$ satisfies the estimate
\begin{equation}
\label{3a.6.0}
|\Omega_k(A,y^\ast)|\le \frac{\|y^\ast\|^p_{A,\e,k}}{k^p}\le \frac{C}{k^p},\quad\forall\,y^\ast\in H^1_0(\Omega)\ :\ \|y^\ast\|^p_{A,\e,k}\le C.
\end{equation}
\end{remark}
\begin{theorem}\label{teor 6.2}
For every $A\in \mathfrak{A}_{ad}$ and every $f\in L^2(\Omega)$ the sequence of weak solutions $\{y_{\e,k}=y_{\e,k}(A,f)\}_{\stackrel{\e>0}{k\in\mathbb{N}}}$ to boundary value problem \eqref{3a.2}--\eqref{3a.3} is uniformly bounded in $H_0^1(\Omega)$.
\end{theorem}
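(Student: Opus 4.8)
The plan is to reduce everything to the weighted quantity $\|\cdot\|_{A,\e,k}$ introduced in \eqref{6.1}, which already captures the full elliptic energy of the regularized operator. Since $y_{\e,k}\in H^1_0(\Om)$ is the weak solution of \eqref{3a.2}--\eqref{3a.3}, I would first test the weak formulation with $\varphi=y_{\e,k}$ itself and use $(A\nabla y,\nabla y)_{\R^N}=|A^{1/2}\nabla y|^2$ to obtain the energy identity
\begin{equation*}
\|y_{\e,k}\|^p_{A,\e,k}=\int_\Om\big(\e+\mathcal{F}_k(|A^{1/2}\nabla y_{\e,k}|^2)\big)^{\frac{p-2}{2}}|A^{1/2}\nabla y_{\e,k}|^2\,dx=\int_\Om f\,y_{\e,k}\,dx.
\end{equation*}

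Next I would estimate the right-hand side by Proposition~\ref{prop 6.1} with $g=f$, which gives
\begin{equation*}
\|y_{\e,k}\|^p_{A,\e,k}\le C_\Om\,\alpha^{-1}\|f\|_{L^2(\Om)}\Big(|\Om|^{\frac{p-2}{2p}}\|y_{\e,k}\|_{A,\e,k}+k^{\frac{2-p}{2}}\|y_{\e,k}\|^{\frac{p}{2}}_{A,\e,k}\Big).
\end{equation*}
Writing $s:=\|y_{\e,k}\|_{A,\e,k}$ and observing that $k^{(2-p)/2}\le1$ for all $p\ge2$ and $k\ge1$, this collapses to $s^{p}\le C_1 s+C_2 s^{p/2}$ with $C_1,C_2$ depending only on $\Om,\alpha,p,\|f\|_{L^2(\Om)}$ and, crucially, \emph{independent of $\e$ and $k$}. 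Since $p\ge2$, for $s\ge1$ one has $s\le s^{p/2}$, hence $s^{p}\le(C_1+C_2)s^{p/2}$, i.e. $s\le(C_1+C_2)^{2/p}$; together with the trivial case $s<1$ this yields a uniform bound $\|y_{\e,k}\|_{A,\e,k}\le C_3$ for all $\e>0$ and $k\in\mathbb{N}$.

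The hard part will be transferring this weighted bound to a genuine $H^1_0(\Om)$ bound: the plain coercivity inequality exploited in the proof of Proposition~\ref{prop 1.15*} only provides the lower bound $\e^{(p-2)/2}\alpha^2\|y\|^2_{H^1_0(\Om)}$ for the energy, and the factor $\e^{(p-2)/2}$ degenerates as $\e\to0$. To circumvent this I would split $\Om$ according to the size of the gradient, exploiting the capping structure of $\mathcal{F}_k$. Setting $\Omega_k:=\{x\in\Om:\ |A^{1/2}\nabla y_{\e,k}(x)|^2\ge k^2+1\}$, on the complement $\Om\setminus\Omega_k$ the inequality $\mathcal{F}_k(t)\ge t$ makes the weight at least $|A^{1/2}\nabla y_{\e,k}|^{p-2}$, so the integrand dominates $|A^{1/2}\nabla y_{\e,k}|^p\ge\alpha^p|\nabla y_{\e,k}|^p$; thus $\|\nabla y_{\e,k}\|_{L^p(\Om\setminus\Omega_k)}$ is uniformly bounded, and Hölder's inequality on the bounded domain converts this into a uniform bound on $\|\nabla y_{\e,k}\|_{L^2(\Om\setminus\Omega_k)}$ through the factor $|\Om|^{(p-2)/p}$.

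On $\Omega_k$, instead, $\mathcal{F}_k(|A^{1/2}\nabla y_{\e,k}|^2)\ge k^2$, so the weight is bounded below by $k^{p-2}\ge1$; consequently
\begin{equation*}
k^{p-2}\alpha^2\int_{\Omega_k}|\nabla y_{\e,k}|^2\,dx\le\int_{\Omega_k}\big(\e+\mathcal{F}_k(|A^{1/2}\nabla y_{\e,k}|^2)\big)^{\frac{p-2}{2}}|A^{1/2}\nabla y_{\e,k}|^2\,dx\le\|y_{\e,k}\|^p_{A,\e,k}\le C_3^p,
\end{equation*}
which, since $k^{p-2}\ge1$, yields a uniform $L^2$ bound of $\nabla y_{\e,k}$ over $\Omega_k$ as well. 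Adding the two contributions gives $\|\nabla y_{\e,k}\|^2_{L^2(\Om)}\le C$ uniformly in $\e$ and $k$, and the Poincaré inequality then delivers the desired uniform bound in $H^1_0(\Om)$. The decisive point of the whole argument is precisely this domain decomposition, which trades the $\e$-degeneracy of the coercivity for the two favourable regimes produced by the truncation $\mathcal{F}_k$ (small gradients give $L^p$-control, large gradients give a weight $\ge k^{p-2}\ge1$), so that neither $\e\to0$ nor $k\to\infty$ spoils the estimate; the case $p=2$ is immediate since then the weight is identically $1$.
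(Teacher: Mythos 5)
Your proof is correct and takes essentially the same route as the paper: test the weak formulation with $y_{\e,k}$, apply Proposition~\ref{prop 6.1} with $g=f$ to get a uniform bound on $\|y_{\e,k}\|_{A,\e,k}$, and then pass to the $H^1_0(\Omega)$ bound by splitting $\Omega$ into the regions of large and small gradient --- this split is precisely the paper's estimate \eqref{6.4*}, which it cites from the proof of Proposition~\ref{prop 6.1}. If anything, your algebraic step (using $k^{(2-p)/2}\le 1$ and the dichotomy $s\ge 1$ versus $s<1$) is tidier than the paper's, which absorbs the term $k^{\frac{2-p}{2}}\|y_{\e,k}\|^{p/2}_{A,\e,k}$ via a limsup and therefore states the uniform bound only for $\e<\e_0$ and $k>k_0$.
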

\begin{proof}
Using notation \eqref{6.1} and Proposition \ref{prop 6.1}, from \eqref{3a.2} we get
\begin{equation}\label{6.2}
\|y\|^p_{A,\e,k}=\int_\Omega fy_{\e,k}\,dx\le \frac{C_\Omega}{\alpha}\|f\|_{L^2(\Omega)}\left(|\Omega|^\frac{p-2}{2p}\|y\|_{A,\e,k}+k^\frac{2-p}{2}\|y\|^{\frac{p}{2}}_{A,\e,k}\right).
\end{equation}
Since $\|y_{\e,k}\|^{\frac{p}{2}-1}_{A,\e,k}\le \|y_{\e,k}\|^{p-1}_{A,\e,k}$ for $ \|y_{\e,k}\|_{A,\e,k}\ge 1$, it follows from \eqref{6.2} that
\begin{equation}\label{6.3}
\limsup_{\stackrel{\e\to 0}{k\to\infty}}\|y_{\e,k}\|^{p-1}_{A,\e,k}\le \limsup_{\stackrel{\e\to 0}{k\to\infty}} \frac{C_\Omega\alpha^{-1}\|f\|_{L^2(\Omega)}|\Omega|^\frac{p-2}{2p}}{1-k^\frac{2-p}{2}C_\Omega\alpha^{-1}\|f\|_{L^2(\Omega)}}=\frac{C_\Omega}{\alpha}\|f\|_{L^2(\Omega)}|\Omega|^\frac{p-2}{2p}.
\end{equation}
Hence, there exist $C>0$, $\e_0>0$ and $k_0>1$ such that $\sup_{\stackrel{\e < \e_0}{ k > k_0}}\|y_{\e,k}\|_{A,\e,k}\le C$ and the required assertion immediately follows from the estimate (see the proof of Proposition \ref{prop 6.1})
\begin{multline}\label{6.4*}
\|y_{\e,k}\|_{H_0^1(\Omega)}\le\alpha^{-1}\left(\int_\Omega |A^\frac{1}{2}\nabla y|^2\,dx\right)^\frac{1}{2}\\\le \alpha^{-1}(|\Omega|^\frac{p-2}{2p}\|y_{\e,k}\|_{A,\e,k}+\|y_{\e,k}\|^\frac{p}{2}_{A,\e,k}).
\end{multline}
\end{proof}

The following results are crucial for our further analysis.
\begin{theorem}
\label{Th 4a.3} Let $\{A_{\e,k}\}_{{\e>0} \atop {k\in \mathbb{N}}}\subset \mathfrak{A}_{ad}$ be a given sequence of admissible controls.
Let $\left\{y_{\e,k}=y_{\e,k}(A_{\e,k})\right\}_{{\e>0} \atop {k\in \mathbb{N}}}$ be a sequence of correspondent solutions to problem \eqref{3a.2}--\eqref{3a.3}.  Then
each cluster point $y$ of the sequence $\left\{y_{\e,k}\in H^1_0(\Omega)\right\}_{{\e>0} \atop {k\in \mathbb{N}}}$ with respect to the weak convergence in $H^1_0(\Omega)$, satisfies: $y\in W^{1,p}_0(\Omega)$.
\end{theorem}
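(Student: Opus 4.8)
The plan is to extract from the given family a weakly convergent subsequence, to reduce the assertion to a uniform $L^p$-bound on the gradient of the limit, and to obtain this bound by splitting $\Omega$ into the region where the regularized density already dominates the genuine $p$-energy and a small exceptional set on which the cut-off $\mathcal{F}_k$ is active. First I would fix a weak $H^1_0$-cluster point $y$ and a subsequence (not relabelled) with $\e\to0$, $k\to\infty$ and $y_{\e,k}\rightharpoonup y$ in $H^1_0(\Omega)$; in particular $v_{\e,k}:=\nabla y_{\e,k}\rightharpoonup \nabla y=:v$ in $L^2(\Omega)^N$. The bounds of Theorem~\ref{teor 6.2} and Proposition~\ref{prop 6.1} are in fact uniform in the control, since the constant $C_\Omega\alpha^{-1}$ in \eqref{6.2}--\eqref{6.3} depends only on $\alpha$, $|\Omega|$ and $\|f\|_{L^2(\Omega)}$ and not on the particular $A_{\e,k}\in\mathfrak{A}_{ad}$. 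Hence, for $\e$ small and $k$ large, $\|y_{\e,k}\|_{A_{\e,k},\e,k}\le C$ and, by \eqref{6.4*}, $\|y_{\e,k}\|_{H^1_0(\Omega)}\le C$ with $C$ independent of $(\e,k)$. Since $p\ge2$, it suffices to prove $\nabla y\in L^p(\Omega)^N$: then, as $y\in H^1_0(\Omega)$ has null trace on the Lipschitz boundary, a zero-extension and mollification argument (equivalently, the characterization of $W^{1,p}_0$ on Lipschitz domains through vanishing trace) gives $y\in W^{1,p}_0(\Omega)$. By the duality $(L^q(\Omega)^N)^\ast=L^p(\Omega)^N$ this in turn reduces to the uniform estimate $\big|\int_\Omega (v,\phi)_{\mathbb{R}^N}\,dx\big|\le C\|\phi\|_{L^q(\Omega)^N}$ for all bounded $\phi$.

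Next I would set $w_{\e,k}:=A_{\e,k}^{1/2}\nabla y_{\e,k}$ and introduce the good set $G_{\e,k}:=\{x\in\Omega:\ |w_{\e,k}(x)|\le k\}$. On $G_{\e,k}$ one has $\mathcal{F}_k(|w_{\e,k}|^2)=|w_{\e,k}|^2$, so that, using $\e\ge0$ and $p\ge2$, the regularized density dominates $|w_{\e,k}|^{p-2}|w_{\e,k}|^2=|w_{\e,k}|^p$; together with $|w_{\e,k}|\ge\alpha|v_{\e,k}|$ (from $A_{\e,k}\in\mathfrak{A}_{ad}$, see \eqref{1.2}) this yields
$$
\int_{G_{\e,k}}|v_{\e,k}|^p\,dx\le\alpha^{-p}\int_{G_{\e,k}}|w_{\e,k}|^p\,dx\le \alpha^{-p}\|y_{\e,k}\|^p_{A_{\e,k},\e,k}\le \alpha^{-p}C.
$$
On the complementary (bad) set $\Omega\setminus G_{\e,k}=\{|w_{\e,k}|>k\}$ one has $|w_{\e,k}|^2>k^2$, hence $\mathcal{F}_k(|w_{\e,k}|^2)\ge k^2$ by the definition of $\mathcal{F}_k$, and therefore the regularized density is bounded below a.e. by $(k^2)^{(p-2)/2}|w_{\e,k}|^2\ge k^{p-2}k^2=k^p$. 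Consequently
$$
k^p\,|\Omega\setminus G_{\e,k}|\le \int_{\Omega\setminus G_{\e,k}}\big(\e+\mathcal{F}_k(|w_{\e,k}|^2)\big)^{\frac{p-2}{2}}|w_{\e,k}|^2\,dx\le \|y_{\e,k}\|^p_{A_{\e,k},\e,k}\le C,
$$
so that $|\Omega\setminus G_{\e,k}|\le C\,k^{-p}$ (compare \eqref{3a.6.0} in Remark~\ref{Rem 3a.1.1}).

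Then, fixing a bounded $\phi\in L^q(\Omega)^N$, I would split the pairing and use the H\"older inequality on $G_{\e,k}$ together with the uniform bound $\|v_{\e,k}\|_{L^2(\Omega)^N}\le C$ on the bad set:
\begin{multline*}
\left|\int_\Omega (v_{\e,k},\phi)_{\mathbb{R}^N}\,dx\right|\le \left(\int_{G_{\e,k}}|v_{\e,k}|^p\,dx\right)^{\!1/p}\|\phi\|_{L^q(\Omega)^N}\\
+\|v_{\e,k}\|_{L^2(\Omega)^N}\,\|\phi\|_{L^\infty(\Omega)^N}\,|\Omega\setminus G_{\e,k}|^{1/2}
\le \alpha^{-1}C^{1/p}\|\phi\|_{L^q(\Omega)^N}+C\,\|\phi\|_{L^\infty(\Omega)^N}\,k^{-p/2}.
\end{multline*}
Since $v_{\e,k}\rightharpoonup v$ in $L^2(\Omega)^N$ and $\phi\in L^2(\Omega)^N$, the left-hand side tends to $\big|\int_\Omega(v,\phi)_{\mathbb{R}^N}\,dx\big|$, while the last term vanishes as $k\to\infty$. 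Hence $\big|\int_\Omega(v,\phi)_{\mathbb{R}^N}\,dx\big|\le \alpha^{-1}C^{1/p}\|\phi\|_{L^q(\Omega)^N}$ for every bounded $\phi$, which by density and duality gives $\|\nabla y\|_{L^p(\Omega)^N}\le \alpha^{-1}C^{1/p}$. Therefore $\nabla y\in L^p(\Omega)^N$ and, as explained, $y\in W^{1,p}_0(\Omega)$.

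The main difficulty is precisely that the cut-off $\mathcal{F}_k$ destroys any uniform $W^{1,p}$-bound on the approximants $y_{\e,k}$: only the regularized $(A,\e,k)$-energy \eqref{6.1} is controlled, and on the set where $\mathcal{F}_k$ is active the genuine $p$-energy may be arbitrarily large. The decisive observation is that this active set has Lebesgue measure $O(k^{-p})$, so that, tested against bounded functions, its contribution disappears in the limit; this is what allows one to transfer the good-set $L^p$-bound to the weak limit through $L^p$--$L^q$ duality. I expect the only other point requiring care to be the routine (but genuinely needed) passage $\nabla y\in L^p$, $y\in H^1_0(\Omega)\Rightarrow y\in W^{1,p}_0(\Omega)$, which relies on the Lipschitz regularity of $\partial\Omega$.
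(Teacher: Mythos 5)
Your proof is correct, and it rests on the same two quantitative facts as the paper's own argument: the uniform bound on the regularized energy $\|y_{\e,k}\|_{A_{\e,k},\e,k}$ from Theorem~\ref{teor 6.2} (whose constants, as you note, do not depend on the particular control in $\mathfrak{A}_{ad}$), and the $O(k^{-p})$ estimate \eqref{3a.6.0} for the measure of the set where the cut-off $\mathcal{F}_k$ is active, combined with the observation that off this set the regularized density dominates $|A^{1/2}_{\e,k}\nabla y_{\e,k}|^p$. Where you genuinely diverge is in how the good-set $L^p$-bound is transferred to the weak limit. The paper removes the dependence of the exceptional set on the index by forming the tail union $B_i=\bigcup_{j\ge i}\Omega_{k_j}(A_{\e_j,k_j},y_{\e_j,k_j})$ (see \eqref{4a.4}): on the \emph{fixed} set $\Omega\setminus B_i$ the whole tail of the gradient sequence is bounded in $L^p$, so it converges weakly in $L^p(\Omega\setminus B_i)^N$ to $\nabla y$, weak lower semicontinuity of the norm gives $\int_{\Omega\setminus B_i}|\nabla y|^p\,dx<\infty$ uniformly in $i$, and letting $i\to\infty$ with $|B_i|\to 0$ concludes. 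You instead let the good and bad sets vary with $(\e,k)$ and run an $L^p$--$L^q$ duality argument: testing against bounded $\phi$, the good-set contribution is controlled by H\"older and the bad-set contribution is killed in the limit by the measure bound together with the crude uniform $L^2$ bound on gradients. Both routes are sound; yours avoids the set-theoretic construction and yields the explicit estimate $\|\nabla y\|_{L^p(\Omega)^N}\le\alpha^{-1}C^{1/p}$, at the price of the density/duality step identifying the limiting functional with an $L^p$ representative, while the paper's stays within the standard weak-compactness toolbox. Note finally that both arguments end with the same step, glossed over in the paper but correctly flagged by you: $y\in H^1_0(\Omega)$ with $\nabla y\in L^p(\Omega)^N$ implies $y\in W^{1,p}_0(\Omega)$, which uses the Lipschitz regularity of $\partial\Omega$.
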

\begin{proof}
To establish this property, we suppose, due to Theorem \ref{teor 6.2},  that there exists a subsequence $\left\{y_{\e_i,k_i}\right\}_{i\in \mathbb{N}}$ of $\left\{y_{\e,k}\right\}_{{\e>0} \atop {k\in \mathbb{N}}}$ (here, $\e_i\to 0$ and $k_i\to\infty$ as $i\to \infty$) and a
distribution $y\in H^1_0(\Omega)$ such that $y_{\e_i,k_i}\rightharpoonup y$ in $H^1_0(\Omega)$ as $i\to\infty$. Further, we
fix an index $i\in \mathbb{N}$ and associate  it with the following set
\begin{gather}
 \label{4a.4}
B_{i}:=\bigcup_{j=i}^\infty\Omega_{k_j}(A_{\e_j,k_j},y_{\e_j,k_j}),\\\notag\text{where }
\Omega_{k_j}(A_{\e_j,k_j},y_{\e_j,k_j}):=\left\{x\in\Omega\,:\, |A^\frac{1}{2}_{\e_j,k_j}\nabla y_{\e_j,k_j}|>\sqrt{k_j^2+1}\right\}.
\end{gather}

Due to estimates \eqref{3a.6.0}, we see that
\begin{gather*}
|B_{i}|\le \displaystyle\frac{\sum_{j=i}^\infty  \displaystyle\frac{1}{k_j^2}\|y_{\e_j,k_j}\|^p_{\e_j,k_j,u_{\e_j,k_j}}}{\alpha^2}\le
 \frac{\sup\limits_{j\in \mathbb{N}} \|y_{\e_j,k_j}\|^p_{\e_j,k_j,u_{\e_j,k_j}}}{\alpha^2}\sum_{j=i}^\infty  \frac{1}{k_j^2}<+\infty,
\end{gather*}
and, therefore,
\begin{equation}
\label{4a.5}
\lim_{i\to\infty} |B_{i}|=0.
\end{equation}
Using the fact that
\begin{align}
\notag
&\int_{\Omega\setminus B_i} |\nabla y_{\e_j,k_j}|^pdx\le \frac{1}{\alpha^p}\int_{\Omega\setminus B_i} \left(\e_j+|A^\frac{1}{2}_{\e_j,k_j}\nabla y_{\e_j,k_j}|^2\right)^{\frac{p-2}{2}}|A^\frac{1}{2}_{\e_j,k_j}\nabla y_{\e_j,k_j}|^2dx\\\label{4a.6}&
= \alpha^{-p}\int_{\Omega\setminus B_i} \left(\e_j+\mathcal{F}_{k_j}(|A^\frac{1}{2}_{\e_j,k_j}\nabla y_{\e_j,k_j}|^2)\right)^{\frac{p-2}{2}}|A^\frac{1}{2}_{\e_j,k_j}\nabla y_{\e_j,k_j}|^2\,dx,\,\forall\,j\ge i,
\end{align}
and \eqref{6.3},
we have that sequence $\{\nabla y_{\e_j,k_j}\}$ is bounded in $L^p(\Omega \setminus B_i)^N$. Since, $\nabla y_{\e_j,k_j} \rightharpoonup \nabla y$ in $L^2(\Omega)^N$, we infer that  $\nabla y_{\e_j,k_j} \rightharpoonup \nabla y$ in $L^p(\Omega)^N$ as well. Hence,
\begin{gather*}
\int_{\Omega} |\nabla y|^p\,dx \stackrel{\text{by \eqref{4a.5}}}{=} \lim_{i\to\infty}\int_{\Omega\setminus B_i} |\nabla y|^p\,dx
\le \lim_{i\to\infty}\liminf_{{j\to\infty}\atop{j\ge i}}\int_{\Omega\setminus B_i} |\nabla y_{\e_j,k_j}|^p\,dx\\
\stackrel{\text{by \eqref{4a.6}}}{\le}\,\alpha^{-p}\lim_{i\to\infty}\liminf_{{j\to\infty}\atop{j\ge i}} \sup_{{\e>0} \atop {k\in \mathbb{N}}}\|y_{\e,k}\|^p_{A_{\e,k},\e,k}<+\infty.
\end{gather*}
Thus, $y\in W^{1,p}_0(\Omega)$ and the proof is complete.
\end{proof}
\begin{proposition}
\label{Th 4a.4} Let $\{(A_{\e,k},y_{\e,k},z_{\e,k})\subset \Xi_{\e,k}\}_{{\e>0} \atop {k\in \mathbb{N}}} $ be a given sequence of admissible triplets to problem \eqref{3a.1}--\eqref{3a.4}.
Then the sequence $\left\{z_{\e,k}\in L^2(\Omega)\right\}_{{\e>0} \atop {k\in \mathbb{N}}}$ is bounded in $L^p(\Omega)$.
\end{proposition}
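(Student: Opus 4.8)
The plan is to exploit the Hammerstein equation \eqref{3a.4}, the positivity of $B$, and the $L^p$-coercivity hidden in the self-interaction term of $F_{\e,k}$, in close analogy with Step~2 of the proof of Theorem~\ref{Th 2.8}. Writing $\Phi_{\e,k}:=F_{\e,k}(y_{\e,k},z_{\e,k})$, relation \eqref{3a.4} reads $z_{\e,k}=-B\Phi_{\e,k}$. First I would reproduce the device of \eqref{1.18}: choosing $w_{\e,k}$ with $B^{\ast}w_{\e,k}=z_{\e,k}$ and pairing the equation with $w_{\e,k}$, the positivity of $B$ (hence of $B^{\ast}$, see Remark~\ref{B_prop}) gives $\langle B^{\ast}w_{\e,k},w_{\e,k}\rangle\ge0$, so that
\[
\langle F_{\e,k}(y_{\e,k},z_{\e,k}),z_{\e,k}\rangle_{L^q(\Om);L^p(\Om)}=-\langle B^{\ast}w_{\e,k},w_{\e,k}\rangle_{L^p(\Om);L^q(\Om)}\le0.
\]
This is the only scalar consequence of the equation I intend to use, and it plays here exactly the role that the sign hypothesis on $F$ played in Theorem~\ref{Th 2.8}.

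Next I would split, using the explicit form \eqref{3aa.5},
\[
\langle F_{\e,k}(y_{\e,k},z_{\e,k}),z_{\e,k}\rangle=\underbrace{\int_\Om\big[\e+\mathcal F_k(|z_{\e,k}|^2)\big]^{\frac{p-2}{2}}|z_{\e,k}|^2\,dx}_{=:I_1}+\underbrace{\int_\Om\big[\e+\mathcal F_k(|y_{\e,k}|^2)\big]^{\frac{p-2}{2}}y_{\e,k}\,z_{\e,k}\,dx}_{=:I_2}.
\]
The term $I_1$ is the source of coercivity: on $\{|z_{\e,k}|\le k\}$ one has $\mathcal F_k(|z_{\e,k}|^2)=|z_{\e,k}|^2$ and $[\e+|z_{\e,k}|^2]^{\frac{p-2}{2}}\ge|z_{\e,k}|^{p-2}$, whence $I_1\ge\int_{\{|z_{\e,k}|\le k\}}|z_{\e,k}|^p\,dx$. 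For the cross term I would use H\"older, $|I_2|\le\|F^y_{\e,k}\|_{L^q(\Om)}\,\|z_{\e,k}\|_{L^p(\Om)}$ with $F^y_{\e,k}:=[\e+\mathcal F_k(|y_{\e,k}|^2)]^{\frac{p-2}{2}}y_{\e,k}$, and bound $\|F^y_{\e,k}\|_{L^q(\Om)}$ \emph{uniformly}: splitting $\Om$ along $\{|y_{\e,k}|\le k\}$ and its complement, estimate \eqref{6.18} controls the measure of the saturation set, the uniform bound $\sup_{\e,k}\|y_{\e,k}\|_{A,\e,k}<\infty$ from \eqref{6.3} together with Theorems~\ref{teor 6.2} and \ref{Th 4a.3} controls $\|y_{\e,k}\|_{L^p(\Om)}$, and the algebraic identity $pq=p+q$ makes the $k$-powers coming from $[\e+\mathcal F_k]^{\frac{(p-2)q}{2}}\le(\e+k^2+1)^{\frac{(p-2)q}{2}}$ cancel exactly against the factor $k^{-(p-q)}$ gained from the small measure of the saturation set.

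Combining these with $\langle F_{\e,k}(y_{\e,k},z_{\e,k}),z_{\e,k}\rangle\le0$ gives $\int_{\{|z_{\e,k}|\le k\}}|z_{\e,k}|^p\,dx\le I_1\le|I_2|\le C\,\|z_{\e,k}\|_{L^p(\Om)}$ with $C$ independent of $\e$ and $k$. To replace the truncated integral by the full $L^p$-norm I would estimate the saturation tail $\int_{\{|z_{\e,k}|>k\}}|z_{\e,k}|^p\,dx$ via the measure estimates \eqref{3a.6.0}--\eqref{6.18}, exactly as for $F^y_{\e,k}$, so as to reach $c\,\|z_{\e,k}\|^p_{L^p(\Om)}\le C\,\|z_{\e,k}\|_{L^p(\Om)}+C'$ with constants independent of $\e,k$; since $p\ge2$ this self-improving inequality yields $\sup_{\e,k}\|z_{\e,k}\|_{L^p(\Om)}<\infty$. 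The delicate point, and the step I expect to be the main obstacle, is precisely this control of the saturation region $\{|z_{\e,k}|>k\}$, where the cut-off $\mathcal F_k$ destroys the pointwise coercivity $[\,\cdot\,]^{\frac{p-2}{2}}|z|^2\ge|z|^p$: one must show that the loss of coercivity there is compensated by the smallness of $|\{|z_{\e,k}|>k\}|$ uniformly in $\e$ and $k$, combining the finiteness of $B:L^q(\Om)\to L^p(\Om)$ from \eqref{K} with the measure bounds of Remark~\ref{Rem 4.1}.
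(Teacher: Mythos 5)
Your opening move --- pairing \eqref{3a.4} with $w_{\e,k}$, where $B^{\ast}w_{\e,k}=z_{\e,k}$, to conclude $\langle F_{\e,k}(y_{\e,k},z_{\e,k}),z_{\e,k}\rangle\le 0$ --- agrees with the paper, which runs the same device to obtain the uniform $L^2$ bound $\|z_{\e,k}\|_{L^2(\Omega)}\le\|y_{\e,k}\|_{L^2(\Omega)}\le C$ (via Theorem~\ref{teor 6.2}). But the rest of your plan has a genuine gap, and it sits exactly where you flag it: the saturation region. Because of the cut-off, on $\{|z_{\e,k}|>k\}$ the self-interaction term only yields $[\e+\mathcal{F}_k(|z_{\e,k}|^2)]^{\frac{p-2}{2}}|z_{\e,k}|^2\le(\e+k^2+1)^{\frac{p-2}{2}}|z_{\e,k}|^2\sim k^{p-2}|z_{\e,k}|^2$, and $k^{p-2}\int_{\{|z_{\e,k}|>k\}}|z_{\e,k}|^2\,dx$ does not dominate $\int_{\{|z_{\e,k}|>k\}}|z_{\e,k}|^p\,dx$: smallness of the measure $|\{|z_{\e,k}|>k\}|$ cannot control the integral of $|z_{\e,k}|^p$ over that set unless one already has equi-integrability of $|z_{\e,k}|^p$, which is precisely the unknown. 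Worse, the measure estimate \eqref{6.18} you invoke for this set is itself stated in terms of $\|z_{\e,k}\|_{L^p(\Omega)}$, so the argument is circular. There is a second gap: your uniform bound on $\|[\e+\mathcal{F}_k(|y_{\e,k}|^2)]^{\frac{p-2}{2}}y_{\e,k}\|_{L^q(\Omega)}$ requires a uniform $L^p$ (or $W^{1,p}_0$) bound on $y_{\e,k}$ for the claimed cancellation of $k$-powers, but the approximating states are only uniformly bounded in $H^1_0(\Omega)$ (Theorem~\ref{teor 6.2}); when $p$ exceeds the Sobolev exponent $2N/(N-2)$ no embedding gives $L^p$, and Theorem~\ref{Th 4a.3} concerns only cluster points, not uniform bounds.

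The paper's proof avoids coercivity entirely for the $L^p$ part: the $L^p$ bound comes from the smoothing property of the kernel operator $B$, not from $F_{\e,k}$. Once the uniform $L^2$ bound on $z_{\e,k}$ is in hand, one argues as follows: since $z_{\e,k}=-BF_{\e,k}(y_{\e,k},z_{\e,k})$ and the positive operator $B:L^2(\Omega)\to L^2(\Omega)$ (Remark~\ref{B_prop}) cannot map unbounded sets into bounded ones, there is a constant $C_1$ with $\|F_{\e,k}(y_{\e,k},z_{\e,k})\|_{L^2(\Omega)}\le C_1$ uniformly in $\e,k$; then H\"older's inequality on the bounded domain gives $\|F_{\e,k}(y_{\e,k},z_{\e,k})\|_{L^q(\Omega)}\le|\Omega|^{\frac{2-q}{2q}}C_1$ (note $q\le 2$ because $p\ge 2$); finally, condition \eqref{K} makes $B:L^q(\Omega)\to L^p(\Omega)$ bounded, whence $\|z_{\e,k}\|_{L^p(\Omega)}=\|BF_{\e,k}(y_{\e,k},z_{\e,k})\|_{L^p(\Omega)}\le C_2$. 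You mention the finiteness of $B:L^q(\Omega)\to L^p(\Omega)$ only in your last sentence, as a possible rescue of the tail estimate; in fact it is the whole proof, and once it is used this way the decomposition into $I_1$, $I_2$ and all the saturation-set bookkeeping become unnecessary.
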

\begin{proof}
First, we show boundedness of the sequence $\left\{z_{\e,k} \right\}_{{\e>0} \atop {k\in \mathbb{N}}}$ in $L^2(\Omega)$. We are going to find $\lambda_{\e,k}\in\mathbb{R}$ such that for all $\|z\|\ge\lambda_{\e,k}$ and $y_{\e,k}\in Y_0=\{y\in H_0^1(\Omega)|\; y\mbox{ satisfies \eqref{6.3}-\eqref{6.4*}}\}$ the inequality  $\langle F_{\e,k}(y_{\e,k},z_{\e,k}),z_{\e,k}\rangle_{L^2(\Omega);L^2(\Omega)}\ge 0$ holds (see Theorem \ref{Th 2.8}).
 We have
\begin{align*}
\langle F_{\e,k}&(y_{\e,k},z_{\e,k}),z_{\e,k}\rangle_{L^2(\Omega);L^2(\Omega)}\\&=\int_\Omega\left((\e+\mathcal{F}_k(y_{\e,k}^2))^\frac{p-2}{2}y_{\e,k}z_{\e,k}
+(\e+\mathcal{F}_k(z_{\e,k}^2))^\frac{p-2}{2}z_{\e,k}^2\right)dx\\&
\ge \e\left(\int_{\Omega} (y_{\e,k}z_{\e,k}+z_{\e,k}^2)\,dx\right)\ge\e \left(\|z_{\e,k}\|^2_{L^2(\Omega)}-\|y_{\e,k}\|_{L^2(\Omega)}\|z_{\e,k}\|_{L^2(\Omega)}\right)\\&=
\e\|z_{\e,k}\|_{L^2(\Omega)}(\|z_{\e,k}\|_{L^2(\Omega)}-\|y_{\e,k}\|_{L^2(\Omega)})\ge 0,
\end{align*}
which implies $\|z_{\e,k}\|_{L^2(\Omega)}\ge\|y_{\e,k}\|_{L^2(\Omega)}=\lambda_{\e,k}$ and vice versa.

As $z_{\e,k}$ is a solution of Hammerstein equation \eqref{3a.4}, the following estimate takes place
$$
\|z_{\e,k}\|_{L^2(\Omega)}\le\lambda_{\e,k}=\|y_{\e,k}\|_{L^2(\Omega)}\le\|y_{\e,k}\|_{H_0^1(\Omega)}\le C,\mbox{ since }y\in Y_0.
$$
Now we show that $z_{\e,k}\in L^p(\Omega)$. Indeed, $B$ maps $L^q(\Omega)\to L^p(\Omega)$ (see \eqref{operator_B}) and $F_{\e,k}(y,z)\in L^2(\Omega)\subset L^q(\Omega)$, we immediately obtain that any solution of the equation
$z+B{F}_{\e,k}(y,z)=0$ belongs to $L^p(\Omega)$. Moreover, since $B$ is also linear continuous operator, mapping $L^2(\Omega)$ to $L^2(\Omega)$, it cannot map unbounded sets into bounded, hence there exists a constant $C_1>0$ such that $\|F_{\e,k}(y_{\e,k},z_{\e,k})\|_{L^2(\Omega)} \le C_1$. By H\"{o}lder inequality
$$
\|F_{\e,k}(y_{\e,k},z_{\e,k})\|_{L^q(\Omega)}\le |\Omega|^\frac{2-q}{2q}\|F_{\e,k}(y_{\e,k},z_{\e,k})\|_{L^2(\Omega)}\le\widetilde{C}_1.
$$
As a result, the boundedness of $B$ implies existence of a constant $C_2$ such that $\|z_{\e,k}\|_{L^p(\Omega)}\le C_2$.
\end{proof}
\begin{proposition}\label{prop 4.1}
Let $\{y_n\}_{n\in \mathbb{N}}\subset L^2(\Omega)$ be a given sequence such that
$$
y_n\to y_0 \mbox{ in }L^2(\Omega),\; \|\left(\e_n+\mathcal{F}_{k_n}(y_n^2)\right)^\frac{p-2}{2}y_n\|_{L^2(\Omega)}\le C \mbox{ and }y_0\in L^p(\Omega).
$$
Then, within a subsequence,
\begin{equation}\label{6.5}
\left(\e_n+\mathcal{F}_{k_n}(y_n^2)\right)^\frac{p-2}{2}y_n\rightarrow |y_0|^{p-2}y_0\mbox{ strongly in } L^q(\Omega).
\end{equation}
\end{proposition}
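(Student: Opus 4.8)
The plan is to pass to a subsequence along which $y_n\to y_0$ almost everywhere, identify the pointwise limit of the left-hand side of \eqref{6.5}, and then upgrade this a.e.\ convergence to strong $L^q$-convergence by exploiting the uniform $L^2$-bound together with the fact that $q=p/(p-1)<2$ when $p>2$. (The case $p=2$ is trivial: then the exponent $(p-2)/2$ vanishes, $\mathcal{F}_{k_n}$ drops out, and the claim reduces to $y_n\to y_0$ in $L^2=L^q$; so below I focus on $p>2$.)

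First I would extract a subsequence (not relabelled), using $y_n\to y_0$ in $L^2(\Omega)$, so that $y_n\to y_0$ a.e.\ in $\Omega$. Set $g_n:=(\e_n+\mathcal{F}_{k_n}(y_n^2))^{(p-2)/2}y_n$ and $g:=|y_0|^{p-2}y_0$. For a.e.\ fixed $x$ the numbers $y_n(x)^2$ converge to the finite value $y_0(x)^2$ and are hence bounded in $n$; since $k_n\to\infty$, one has $y_n(x)^2\le k_n^2$ for all large $n$, so $\mathcal{F}_{k_n}(y_n^2(x))=y_n^2(x)\to y_0(x)^2$ by the definition of $\mathcal{F}_{k_n}$. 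As $\e_n\to 0$ and $t\mapsto t^{(p-2)/2}$ is continuous on $[0,\infty)$ for $p\ge 2$, this gives $(\e_n+\mathcal{F}_{k_n}(y_n^2(x)))^{(p-2)/2}\to|y_0(x)|^{p-2}$, and multiplying by $y_n(x)\to y_0(x)$ yields $g_n\to g$ a.e.\ in $\Omega$. The hypothesis $y_0\in L^p(\Omega)$ ensures $g\in L^q(\Omega)$, since $\|g\|_{L^q(\Omega)}^q=\int_\Omega|y_0|^p\,dx<\infty$.

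Next I would convert a.e.\ convergence into $L^q$-convergence by establishing equi-integrability. The key observation is that the uniform bound $\|g_n\|_{L^2(\Omega)}\le C$ controls $\{|g_n|^q\}$ in $L^{2/q}(\Omega)$, because $\||g_n|^q\|_{L^{2/q}(\Omega)}=\|g_n\|_{L^2(\Omega)}^q\le C^q$ and $2/q=2-2/p>1$ for $p>2$. By H\"older's inequality, for any measurable $S\subset\Omega$ one gets $\int_S|g_n|^q\,dx\le C^q|S|^{1-q/2}$, which tends to $0$ as $|S|\to 0$ uniformly in $n$; hence $\{|g_n|^q\}$ is equi-integrable. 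Since $g\in L^q(\Omega)$ is fixed, the elementary inequality $|g_n-g|^q\le 2^{q-1}(|g_n|^q+|g|^q)$ (valid for $q\ge 1$) shows that $\{|g_n-g|^q\}$ is equi-integrable as well, while it converges to $0$ a.e.\ by the previous step.

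Finally, applying Lemma~\ref{Th_1.9} to the equi-integrable sequence $|g_n-g|^q\to 0$ a.e.\ gives $|g_n-g|^q\to 0$ in $L^1(\Omega)$, that is $g_n\to g$ strongly in $L^q(\Omega)$, which is exactly \eqref{6.5}. The main obstacle is the equi-integrability step: everything hinges on using the $L^2$-bound of the hypothesis through the strict inequality $2/q>1$, which is precisely what rules out concentration of mass and turns the pointwise limit into strong convergence; the pointwise identification of the limit, by contrast, is routine once a.e.\ convergence of $\{y_n\}$ is secured.
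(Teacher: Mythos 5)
Your proof is correct, but it follows a genuinely more direct route than the paper's. The paper first records the strong convergence $(\e_n+\mathcal{F}_{k_n}(y_0^2))^{\frac{p-2}{2}}y_0\to|y_0|^{p-2}y_0$ in $L^q(\Omega)$ and then splits the remaining difference into two terms,
$I_1^k=(\e_n+\mathcal{F}_{k_n}(y_n^2))^{\frac{p-2}{2}}(y_n-y_0)$ and
$I_2^k=\big((\e_n+\mathcal{F}_{k_n}(y_n^2))^{\frac{p-2}{2}}-(\e_n+\mathcal{F}_{k_n}(y_0^2))^{\frac{p-2}{2}}\big)y_0$,
treating each by Zhikov's product lemma (Lemma \ref{Gikov}): a factor bounded in $L^1(\Omega)$ and vanishing a.e.\ multiplied by an equi-integrable factor, where the equi-integrability of $|\e_n+\mathcal{F}_{k_n}(y_n^2)|^{\frac{p(p-2)}{2(p-1)}}$ is obtained through a H\"older argument with exponents $s=\frac{p-1}{p-2}$, $s'=p-1$ and the same $L^2$-bound you exploit. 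You avoid the splitting altogether: you identify the a.e.\ limit of the whole expression $g_n$ (using $k_n\to\infty$ to discard the truncation $\mathcal{F}_{k_n}$ pointwise for large $n$), obtain equi-integrability of $\{|g_n|^q\}$ in one line from $\|g_n\|_{L^2(\Omega)}\le C$ and the strict inequality $2/q>1$, and conclude with the Vitali-type Lemma \ref{Th_1.9} applied to $|g_n-g|^q$. Both arguments rest on the same two pillars — a.e.\ convergence along a subsequence and equi-integrability extracted from the uniform $L^2$-bound — but yours replaces two applications of Lemma \ref{Gikov} and the attendant exponent bookkeeping by a single transparent H\"older estimate, which is a real simplification; the paper's two-term decomposition, on the other hand, mirrors the splittings it reuses later (e.g.\ in the proof of Proposition \ref{cor 6.1}). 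A side observation: your argument shows the hypothesis $y_0\in L^p(\Omega)$ is actually redundant, since Fatou's lemma applied to $|g_n|^q\to|y_0|^p$ a.e., together with the uniform bound $\int_\Omega|g_n|^q\,dx\le C^q|\Omega|^{1-q/2}$, already yields $y_0\in L^p(\Omega)$.
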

\begin{proof}
The proof is given in Appendix.
\end{proof}
\begin{proposition}\label{cor 6.1}
Let $\e_n\to 0$ and $k_n\to \infty$ as $n\to \infty$ and $\{(A_n,y_n,z_n)\subset \Xi_{\e_n,k_n}\}_{n\in \mathbb{N}}$ be a sequence of admissible triplets to problem \eqref{3a.1}--\eqref{3a.4}, such that
\begin{equation*}
A_n^{\frac{1}{2}}\stackrel{\ast}{\rightharpoonup} A_0^{\frac{1}{2}} \text{ in $BV(\Omega;\mathbb{S}^N)$},\quad
y_n\rightharpoonup y_0\ \text{ in $H^1_0(\Omega)$},\quad z_n\rightharpoonup z_0\text{ in $L^2(\Omega)$}.
 \end{equation*}
Then $F_{\e_n,k_n}(y_n,z_n)\rightharpoonup F(y_0,z_0)=|y_0|^{p-2}y_0+|z_0|^{p-2}z_0$   in $L^q(\Omega)$, where
$$
z_0+BF(y_0,z_0)=0,
$$
and $z_n\to z_0$ strongly in $L^p(\Omega)$.
\end{proposition}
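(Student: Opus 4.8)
The plan is to first pass to the limit in the Hammerstein equation to obtain an abstract relation $z_0+B\nu_0=0$ for a weak limit $\nu_0$ of the nonlinear terms, and then to identify $\nu_0=F(y_0,z_0)$ by separating the contribution of the $y$-variable (which converges strongly) from that of the $z$-variable (which converges only weakly and must be treated by monotonicity). First I would record the weak limits. By Proposition~\ref{Th 4a.4} the sequence $\{z_n\}$ is bounded in $L^p(\Omega)$ and, writing $w_n:=F_{\e_n,k_n}(y_n,z_n)$, the sequence $\{w_n\}$ is bounded in $L^2(\Omega)\subset L^q(\Omega)$; hence, along a subsequence, $w_n\rightharpoonup\nu_0$ in $L^2(\Omega)$ (and in $L^q(\Omega)$). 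Since $z_n\rightharpoonup z_0$ in $L^2(\Omega)$ and $\{z_n\}$ is $L^p$-bounded, also $z_n\rightharpoonup z_0$ in $L^p(\Omega)$, so $z_0\in L^p(\Omega)$, while $y_0\in W^{1,p}_0(\Omega)\subset L^p(\Omega)$ by Theorem~\ref{Th 4a.3}. Passing to the limit in $z_n+Bw_n=0$, using that $B$ is linear and weakly continuous on $L^2(\Omega)$ (Remark~\ref{B_prop}), gives $z_0+B\nu_0=0$.

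Next I would treat the $y$-part. As $y_n\rightharpoonup y_0$ in $H^1_0(\Omega)$, the Rellich--Kondrachov theorem gives $y_n\to y_0$ strongly in $L^2(\Omega)$. Setting $G_{\e,k}(w):=(\e+\mathcal{F}_k(w^2))^{\frac{p-2}{2}}w$, so that $w_n=G_{\e_n,k_n}(y_n)+G_{\e_n,k_n}(z_n)$, I would apply Proposition~\ref{prop 4.1} to $\{y_n\}$ (the point to verify being the required $L^2$-bound on $G_{\e_n,k_n}(y_n)$) to conclude $G_{\e_n,k_n}(y_n)\to|y_0|^{p-2}y_0$ strongly in $L^q(\Omega)$. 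It then follows that $G_{\e_n,k_n}(z_n)=w_n-G_{\e_n,k_n}(y_n)\rightharpoonup\mu_0:=\nu_0-|y_0|^{p-2}y_0$ in $L^q(\Omega)$.

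The heart of the proof is to show $\mu_0=|z_0|^{p-2}z_0$, and I would do this with Lemma~\ref{Gikov1}, applied to $B_n(x,\xi)=(\e_n+\mathcal{F}_{k_n}(\xi^2))^{\frac{p-2}{2}}\xi$ and $B(x,\xi)=|\xi|^{p-2}\xi$: these satisfy monotonicity (Proposition~\ref{prop 3.4}), the normalization $B_n(x,0)=0$, the local bound $|B_n(x,\xi)|\le(\e_0+\xi^2+1)^{\frac{p-2}{2}}|\xi|$, and the pointwise convergence $B_n(x,\xi)\to|\xi|^{p-2}\xi$. With $v_n=z_n\rightharpoonup z_0$ in $L^p(\Omega)$ and $B_n(x,z_n)=G_{\e_n,k_n}(z_n)\rightharpoonup\mu_0$ in $L^q(\Omega)$, Lemma~\ref{Gikov1} yields $\liminf_n\langle G_{\e_n,k_n}(z_n),z_n\rangle\ge\langle\mu_0,z_0\rangle$, with equality forcing $\mu_0=|z_0|^{p-2}z_0$. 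To produce the matching upper bound I would use the Hammerstein relation $z_n=-Bw_n$, so that $\langle G_{\e_n,k_n}(z_n),z_n\rangle=\langle w_n,z_n\rangle-\langle G_{\e_n,k_n}(y_n),z_n\rangle$. Here $\langle G_{\e_n,k_n}(y_n),z_n\rangle\to\langle|y_0|^{p-2}y_0,z_0\rangle$ (strong times weak), and $\langle w_n,z_n\rangle=-\langle w_n,Bw_n\rangle$ with $\liminf_n\langle w_n,Bw_n\rangle\ge\langle\nu_0,B\nu_0\rangle$ by weak lower semicontinuity of the positive quadratic form attached to $B$, exactly as in \eqref{1.21.1}. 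Since $B\nu_0=-z_0$, this gives $\limsup_n\langle G_{\e_n,k_n}(z_n),z_n\rangle\le\langle\nu_0,z_0\rangle-\langle|y_0|^{p-2}y_0,z_0\rangle=\langle\mu_0,z_0\rangle$, so the limit exists and equals $\langle\mu_0,z_0\rangle$; the equality case of Lemma~\ref{Gikov1} then gives $\mu_0=|z_0|^{p-2}z_0$. Hence $\nu_0=|y_0|^{p-2}y_0+|z_0|^{p-2}z_0=F(y_0,z_0)$, the relation of Step~1 becomes $z_0+BF(y_0,z_0)=0$, and since $\nu_0$ is uniquely identified the whole sequence converges, not merely the extracted subsequence.

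Finally, for the strong $L^p$-convergence of $z_n$, the bound $\int_\Omega\int_\Omega K^2\,dx\,dt\le C_2$ (Remark~\ref{B_prop}) makes $B$ Hilbert--Schmidt, hence compact, on $L^2(\Omega)$; from $w_n\rightharpoonup\nu_0$ in $L^2(\Omega)$ I obtain $z_n=-Bw_n\to-B\nu_0=z_0$ strongly in $L^2(\Omega)$, and a.e. along a subsequence. To upgrade to $L^p(\Omega)$ I would combine this a.e. convergence with the energy identity $\langle G_{\e_n,k_n}(z_n),z_n\rangle\to\int_\Omega|z_0|^p\,dx$ from Step~3: a Scheffé/Vitali argument (equivalently, the Radon--Riesz property of the uniformly convex space $L^p(\Omega)$ together with $z_n\rightharpoonup z_0$ and $\|z_n\|_{L^p(\Omega)}\to\|z_0\|_{L^p(\Omega)}$) yields $z_n\to z_0$ strongly in $L^p(\Omega)$. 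The main obstacle is the nonlinear passage to the limit in Step~3, namely turning the one-sided Minty inequality of Lemma~\ref{Gikov1} into the identification $\mu_0=|z_0|^{p-2}z_0$ via the energy equality; a secondary technical point is verifying the $L^2$-bound that legitimizes Proposition~\ref{prop 4.1} for the $y$-term, while the $L^2\to L^p$ upgrade is comparatively routine once the norm convergence $\|z_n\|_{L^p(\Omega)}\to\|z_0\|_{L^p(\Omega)}$ is secured.
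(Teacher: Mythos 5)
Your proposal follows, in its core, the same strategy as the paper's proof: pass to the limit in the Hammerstein equation to get $z_0+B\nu_0=0$, handle the $y$-term by Proposition \ref{prop 4.1}, and identify the weak limit of the $z$-term through the equality case of Zhikov's Lemma \ref{Gikov1}, matching the $\liminf$ inequality against an upper bound produced from the equation and the positivity of $B$. This is exactly the computation \eqref{6.8}--\eqref{6.10} of the paper, only organized with the relation $z_n=-Bw_n$ ($w_n$ being the nonlinearity) instead of the adjoint representation $z_n=B^{\ast}w_n$; your variant is arguably cleaner since it avoids assuming $z_n$ lies in the range of $B^\ast$. Where you genuinely depart from the paper is the final step: you use that $B$ is Hilbert--Schmidt on $L^2(\Omega)$ (Remark \ref{B_prop}), hence compact, to get $z_n=-Bw_n\to z_0$ strongly in $L^2(\Omega)$, and then try to upgrade to $L^p(\Omega)$ via norm convergence and Radon--Riesz; the paper never uses compactness of $B$, but instead drives the monotonicity term $J_1^n=\int_\Omega(|z_n|^{p-2}z_n-|z_0|^{p-2}z_0)(z_n-z_0)\,dx$ to zero and invokes the pointwise inequality $(|a|^{p-2}a-|b|^{p-2}b)(a-b)\ge 2^{2-p}|a-b|^p$. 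Your compactness observation is a real gain: with interpolation it immediately yields $z_n\to z_0$ strongly in every $L^r(\Omega)$, $r<p$.

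Two gaps remain. First, the $L^2$-bound on $G_{\e_n,k_n}(y_n)=(\e_n+\mathcal{F}_{k_n}(y_n^2))^{\frac{p-2}{2}}y_n$ that legitimizes Proposition \ref{prop 4.1} is flagged but not verified, and it cannot come from pointwise estimates (since $\mathcal{F}_k(t)\le t+1$, one only gets $|G_{\e_n,k_n}(y_n)|\lesssim |y_n|^{p-1}+|y_n|$, and $y_n$ is not known to be bounded in $L^{2(p-1)}(\Omega)$); nor does it follow from your $L^2$-bound on $w_n$, because the $z$-part $G_{\e_n,k_n}(z_n)$ is only bounded in $L^q(\Omega)$. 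The paper extracts it from the equation: $BG_{\e_n,k_n}(y_n)=-z_n-BG_{\e_n,k_n}(z_n)$ is bounded in $L^p(\Omega)\subset L^2(\Omega)$, and then the positive operator $B$ ``cannot map unbounded sets into bounded ones'' (note also that the $L^2$-bound on $w_n$ you cite lives in the proof, not the statement, of Proposition \ref{Th 4a.4}). Second, and more seriously for your route: the energy identity $\langle G_{\e_n,k_n}(z_n),z_n\rangle\to\int_\Omega|z_0|^p\,dx$ does not yield $\|z_n\|_{L^p(\Omega)}\to\|z_0\|_{L^p(\Omega)}$. On the set $\{|z_n|>k_n\}$ the truncation is active, so $G_{\e_n,k_n}(z_n)z_n\le(\e_n+k_n^2+1)^{\frac{p-2}{2}}z_n^2$, which can be far smaller than $|z_n|^p$ there (e.g.\ spikes of height $k_n^2$ on sets of measure $k_n^{-2p}$ carry unit $L^p$-mass but vanishing truncated energy); hence the tail $\int_{\{|z_n|>k_n\}}|z_n|^p\,dx$ escapes control, and without equi-integrability of $\{|z_n|^p\}$ --- which is essentially equivalent, via Vitali, to the strong $L^p$ convergence you are trying to prove --- the Radon--Riesz step is not secured. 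To be fair, the paper's own proof has a parallel lacuna: its terms $I_4^n$ and $J_2^n$ contain $\|z_n\|_{L^p(\Omega_{k_n}(z_n))}$ and are claimed to vanish merely because $|\Omega_{k_n}(z_n)|\to 0$. So your argument is no worse off than the published one, but the upgrade from $L^r(\Omega)$, $r<p$, to the endpoint $L^p(\Omega)$ is the genuinely unfinished point of your proposal.
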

\begin{proof}
\textit{Step 1.} Here we show, that
\begin{equation}\label{6.8a}
z_n+B\left((\e_n+\mathcal{F}_{k_n}(z_n^2))^\frac{p-2}{2}z_n\right)\to z_0+B |z_0|^{p-2}z_0\mbox{ strongly in }L^p(\Omega).
\end{equation}
in view of equation \eqref{3a.4} we have $z_n=-BF_{\e_n,k_n}(y_n,z_n)$. Since sequence $\{z_n\}_{n\in\mathbb{N}}$ is bounded in $L^p(\Omega)$ (see Proposition \ref{Th 4a.4}) then the following estimate
$\|(\e_n+\mathcal{F}_{k_n}(z_n^2))^\frac{p-2}{2}z_n\|_{L^q(\Omega)}\le C_1$ takes place. Indeed,
\begin{multline*}
\int_\Omega \left|(\e_n+\mathcal{F}_{k_n}(z_n^2))^\frac{p-2}{2}z_n\right|^\frac{p}{p-1}\,dx\le \int_\Omega (\e_n+z_n^2)^\frac{p(p-2)}{2(p-1)}|z_n|
^\frac{p}{p-1}\,dx\\\le \max\{2^{\frac{p(p-2)}{2(p-1)}-1};1\}\int_\Omega \left(\e_n^\frac{p(p-2)}{2(p-1)}|z_n|^\frac{p}{p-1}+|z_n|^p\right)\,dx\le C_1,
\end{multline*}
because
$$
(\e_n+z_n^2)^r\le\left\{
\begin{aligned}
&2^{r-1}(\e_n^r+z_n^{2r}),&\forall\, r\ge 1\,\mbox{(by convexity of function}\,g(x)=x^r),\\
&\e_n^r+z_n^{2r},&\forall\, 0<r<1\qquad\qquad\quad\mbox{(see \cite[Teorem 27]{Hardy})}.
\end{aligned}
\right.
$$
Continuity of $B$ implies boundedness in $L^p(\Omega)$ of the left-hand side of equation
\begin{equation}\label{6.7}
z_n+B\left((\e_n+\mathcal{F}_{k_n}(z_n^2))^\frac{p-2}{2}z_n\right)=-B\left((\e_n+\mathcal{F}_{k_n}(y_n^2))^\frac{p-2}{2}y_n\right),\quad\forall\;n\in\mathbb{N}.
\end{equation}
Hence, the right-hand side is bounded in $L^p(\Omega)\subset L^2(\Omega)$ as well, and, therefore,
$$
\|B(\e_n+\mathcal{F}_{k_n}(y_n^2))^\frac{p-2}{2}y_n\|_{L^2(\Omega)}\le C_2\,\Longleftrightarrow\, \|(\e_n+\mathcal{F}_{k_n}(y_n^2))^\frac{p-2}{2}y_n\|_{L^2(\Omega)}\le C_3.
$$
 Using Proposition \ref{prop 4.1} and continuity of $B$, we obtain the strong convergence of the sequence in the right-hand side of equation \eqref{6.7}, i.e.
$$-B\left((\e_n+\mathcal{F}_{k_n}(y_n^2))^\frac{p-2}{2}y_n\right)\to -B|y_0|^{p-2}y_0\mbox{ strongly in }L^p(\Omega).$$
In particular, this fact leads to the strong convergence of the left-hand side, i.e.
\begin{equation}\label{6.9}
z_n+B\left((\e_n+\mathcal{F}_{k_n}(z_n^2))^\frac{p-2}{2}z_n\right)\to z_0+B\zeta\mbox{ strongly in }L^p(\Omega),
\end{equation}
where $\zeta\in L^q(\Omega)$ is a weak limit of $\{(\e_n+\mathcal{F}_{k_n}(z_n^2))^\frac{p-2}{2}z_n\}_{n\in\mathbb{N}}$ within a subsequence.

Now we are in a position to show, that $\xi=|z_0|^{p-2}z_0$.
Indeed, let  $w_n\in L^q(\Omega)$ is such that $z_n=B^\ast w_n$ $\forall\, n\in\mathbb{N}$. In view of properties of $B$, $w_n\rightharpoonup w_0$ weakly in $L^q(\Omega)$, $B^\ast w_0=z_0$. Now we multiply on $w_n$ both sides of equality \eqref{6.7}. We have
\begin{multline}\label{6.8}
\langle w_n,B^\ast w_n\rangle_{L^q(\Omega);L^p(\Omega)}+\langle(\e_n+\mathcal{F}_{k_n}((B^\ast w_n)^2))^\frac{p-2}{2}B^\ast w_n,B^\ast w_n\rangle_{L^q(\Omega);L^p(\Omega)}\\=-\langle(\e_n+\mathcal{F}_{k_n}(y_n)^2)^\frac{p-2}{2}y_n,B^\ast w_n\rangle_{L^q(\Omega);L^p(\Omega)}.
\end{multline}
If we put $B_n(x,\xi)=(\e_n+\mathcal{F}_{k_n}(\xi)^2)^\frac{p-2}{2}\xi$, then $\lim_{n\to\infty}B_n(x,\xi)=|\xi|^{p-2}\xi$ and Lemma \ref{Gikov1} implies
\begin{multline}\label{6.10}
\liminf_{n\to\infty}\Big(\langle w_n+(\e_n+\mathcal{F}_{k_n}((B^\ast w_n)^2))^\frac{p-2}{2}B^\ast w_n,B^\ast w_n\rangle_{L^p(\Omega);L^q(\Omega)}\Big)\\
\ge\liminf_{n\to\infty}\langle w_n,B^\ast w_n\rangle_{L^p(\Omega);L^q(\Omega)}\\+\liminf_{n\to\infty}\langle(\e_n+\mathcal{F}_{k_n}((B^\ast w_n)^2))^\frac{p-2}{2}B^\ast w_n,B^\ast w_n\rangle_{L^p(\Omega);L^q(\Omega)}\\
\ge \langle w_0,B^\ast w_0\rangle_{L^p(\Omega);L^q(\Omega)}+\langle \zeta,B^\ast w_0\rangle_{L^q(\Omega);L^p(\Omega)}=\langle z_0+B\zeta, w_0\rangle_{L^p(\Omega);L^q(\Omega)}.
\end{multline}
However, in view of \eqref{6.9}, all inequalities in \eqref{6.10} become equalities and Lemma \ref{Gikov1} implies
$\zeta=|z_0|^{p-2}z_0$ and, in particular,
after passing to the limit in \eqref{6.7}, we obtain the desired equation
$$z_0+B|z_0|^{p-2}z_0=-B|y_0|^{p-2}y_0\mbox{ or }z_0+BF(y_0,z_0)=0.$$

\textit{Step 2.} We are left to show the strong convergence of $\{z_n\}_{n\in\mathbb{N}}$ to element $z_0$ in $L^p(\Omega)$. Indeed, by \eqref{6.8a}
$$
\alpha _n:=z_n-z_0+B\left((\e_n+\mathcal{F}_{k_n}(z_n^2))^\frac{p-2}{2}z_n-|z_0|^{p-2}z_0\right)\to 0\, \mbox{strongly in}\,L^p(\Omega),
$$
and $\langle \alpha_n, w_n-w_0\rangle_{L^p(\Omega);L^q(\Omega)}\rightarrow 0$ as $n\to\infty$,
since $w_n-w_0\rightharpoonup 0$ weakly in $L^q(\Omega)$. Let us denote $\Omega_{k_n}(z_n):=\{x\in\Omega:\,z_n(x)>\sqrt{k_n^2+1}\}$ and,  by \eqref{6.18},
$|\Omega_{k_n}(z_n)|\le \|z_n\|^p_{L^p(\Omega)}k_n^{-p}\le Ck_n^{-p}$. Then the following chain of relations takes place
\begin{multline}\label{6.12}
\langle \alpha_n, w_n-w_0\rangle_{L^p(\Omega);L^q(\Omega)}=\langle z_n-z_0, w_n-w_0\rangle_{L^p(\Omega);L^q(\Omega)}\\+\left\langle B\left((\e_n+\mathcal{F}_{k_n}(z_n^2))^\frac{p-2}{2}z_n-|z_0|^{p-2}z_0\right), w_n-w_0\right\rangle_{L^p(\Omega);L^q(\Omega)}\\
=\langle B^\ast(w_n-w_0), w_n-w_0\rangle_{L^p(\Omega);L^q(\Omega)}\\+\langle (\e_n+\mathcal{F}_{k_n}(z_n^2))^\frac{p-2}{2}z_n-|z_0|^{p-2}z_0, z_n-z_0\rangle_{L^q(\Omega);L^p(\Omega)}\\
=\langle B^\ast(w_n-w_0), w_n-w_0\rangle_{L^p(\Omega);L^q(\Omega)} \\+\int_{\Omega\setminus \Omega_{k_n}(z_n)}\big(|z_n|^{p-2}z_n-|z_0|^{p-2}z_0\big)(z_n-z_0)\,dx\\
+\int_{\Omega\setminus \Omega_{k_n}(z_n)}\big((\e_n+z_n^2)^\frac{p-2}{2}z_n-|z_n|^{p-2}z_n\big)(z_n-z_0)\,dx\\+ \int_{\Omega_{k_n}(z_n)}\big((\e_n+\mathcal{F}_{k_n}(z_n^2))^\frac{p-2}{2}z_n-|z_0|^{p-2}z_0\big)( z_n-z_0)\,dx
\\=I^n_1+I^n_2+I^n_3+I^n_4\rightarrow 0.
\end{multline}
Let us show, that $I^n_j\to 0$ for $j=3,4$. Indeed, as for $I^n_4$, we easily get the desired result,  since$|\Omega_{k_n}(z_
n)|\le\frac{C}{k_n^p}\stackrel{n\to\infty}{\longrightarrow} 0$, as follows:
\begin{align}\notag
I^n_4&\le \int_{\Omega_{k_n}(z_n)}(\e_n+(z_n)^2)^\frac{p-2}{2}|z_n(z_n-z_0)|\,dx+\int_{\Omega_{k_n}(z_n)}|z_0|^{p-1}|z_n-z_0|\,dx\\\notag
&\le \int_{\Omega_{k_n}(z_n)}\left[\max\{2^\frac{p-4}{2},1\}\left(\e^\frac{p-2}{2}|z_n|+|z_n|^{p-1}\right)+|z_0|^{p-1}\right]|z_n-z_0|\,dx
\\\notag
&\le\left(\e^\frac{p-2}{2}\|z_n\|_{L^q(\Omega_{k_n}(z_n))}+\|z_n\|^{p-1}_{L^p(\Omega_{k_n}(z_n))} + \|z_0\|^{p-1}_{L^p(\Omega_{k_n}(z_n))}\right)\\\label{6.13}&\times\max\{2^\frac{p-4}{2},1\}\|z_n-z_0\|_{L^p(\Omega_{k_n}(z_n))}
  \to 0.
\end{align}
And by Lebesgue's theorem (see Lemma \ref{Th_1.9}), we have
\begin{equation}\label{6.12a}
|I^n_3|\le\int_{\Omega\setminus \Omega_{k_n}(z_n)}\left|(\e_n+z_n^2)^\frac{p-2}{2}-|z_n|^{p-2}\right|\cdot|z_n(z_n-z_0)|\,dx\rightarrow 0,
\end{equation}
as the integrand converges to zero a.e. in $\Omega\setminus \Omega_{k_n}(z_n)$ and the estimate
\begin{multline*}
|I^n_3|\le
\left(\max\{2^\frac{p-4}{2};1\}\e^\frac{p-2}{2}\|z_n\|_{L^q(\Omega)}+|2^\frac{p-4}{2}-1|\cdot\|z_n\|^{p-1}_{L^p(\Omega)}\right)
\\\times\|z_n-z_0\|_{L^p(\Omega)}\le C
\end{multline*}
provides its equi-integrability property.

Hence, combining \eqref{6.12},\eqref{6.13} and \eqref{6.12a}, we get $I^n_1+I^n_2\to 0$ as $n\to \infty$. Since $I^n_1\ge 0$ and $I^n_2\ge 0$ for all $n\in\mathbb{N}$, it follows that $I^n_1\to 0$ and $I^n_2\to 0$. However,
\begin{multline}\label{6.13a}
I^n_2=\int_{\Omega}\big(|z_n|^{p-2}z_n-|z_0|^{p-2}z_0\big)(z_n-z_0)\,dx\\
-\int_{\Omega_{k_n}(z_n)}\big(|z_n|^{p-2}z_n-|z_0|^{p-2}z_0\big)(z_n-z_0)\,dx=J_1^n-J^n_2\rightarrow 0,
\end{multline}
where similarly to \eqref{6.13}
\begin{multline}\label{6.13b}
|J_2|\le\big(\|z_n\|^{p-1}_{L^p(\Omega_{k_n}(z_n))}+\|z_0\|^{p-1}_{L^p(\Omega_{k_n}(z_n))}\big)\\\times
\big(\|z_n\|_{L^p(\Omega_{k_n}(z_n))}+\|z_0\|_{L^p(\Omega_{k_n}(z_n))}\big)\to 0\mbox{ since } |\Omega_{k_n}(z_
n)|\stackrel{n\to\infty}{\longrightarrow} 0.
\end{multline}
Hence, by well known inequality $(|a|^{p-2}a-|b|^{p-2}b)(a-b)\ge 2^{2-p}|a-b|^p$, taking into account \eqref{6.13a} and \eqref{6.13b},  we have
$$
\|z_n-z_0\|^p_{L^p(\Omega)}\le 2^{p-2}J_1^n\rightarrow 0\mbox{ as }n\to\infty.
$$
Thus $z_n\to z$ in $L^p(\Omega)$. The proof is complete.
\end{proof}

We are now in a position to show that optimal pairs to approximating OCP \eqref{3a.1}--\eqref{3a.4} lead in the limit to some optimal solutions to the original OCP \eqref{3.1*}--\eqref{3.1c*}. With that in mind we make use of the scheme of the direct variational convergence of OCPs \cite{KogutLeugering2011}. We begin with
the following definition for the convergence of constrained minimization problems.
\begin{definition}
\label{Def 3a.11} A problem $\left<\inf_{(A,y,z)\in\Xi} I(A,y,z)\right>$ is
the variational limit of the sequence $\left\{\left<\inf_{(A,y,z)\in\Xi_{\e,k}} I_{\e,k}(A,y,z)\right>; {{\e>0} \atop {k\in \mathbb{N}}}\right\}$ as $\e\to 0$ and $k\to\infty$
with respect to the $\tau_1$-convergence in $L^\infty(\Omega;\mathbb{S}^N)\times H^1_0(\Omega)\times L^2(\Omega)$, if the following conditions are satisfied:
\begin{enumerate}%[$\quad$(dd)]
\item[$\quad$(d)] If sequences $\left\{\e_n\right\}_{n\in \mathbb{N}}$, $\left\{k_n\right\}_{n\in \mathbb{N}}$, and
$\left\{(A_n,y_n,z_n)\right\}_{n\in \mathbb{N}}$ are such that $\e_n
\rightarrow 0$ and $k_n\to\infty$ as $n\rightarrow \infty$, $(A_n,y_n,z_n)\in
\Xi_{\e_n,k_n}$ $\forall\,n\in \mathbb{N}$, and
$(A_n,y_n,z_n)\stackrel{\tau_1}{\longrightarrow} (A,y,z)$ in $L^\infty(\Omega;\mathbb{S}^N)\times H^1_0(\Omega)\times L^2(\Omega)$ as follows
\begin{equation}
\label{3a.12.1}
\begin{split}
y_n\rightharpoonup y\ &\text{ in $H^1_0(\Omega)$},\quad y_n\rightarrow y\ \text{ in $L^2(\Omega)$},\quad z_n\rightharpoonup z\text{ in $L^2(\Omega)$}\\
&A_n^{\frac{1}{2}}\stackrel{\ast}{\rightharpoonup} A_0^{\frac{1}{2}} \text{ in $BV(\Omega;\mathbb{S}^N)$}, \quad A_n^{\frac{1}{2}}{\rightarrow} A_0^{\frac{1}{2}} \text{ in $L^1(\Omega;\mathbb{S}^N)$},
\end{split}
\end{equation}
then
\begin{equation}
\label{3a.12} (A,y,z)\in \Xi;\quad I(A,y,z)\le
\liminf_{n\to\infty}I_{\e_n,k_n}(A_n,y_n,z_n);
\end{equation}

\item[$\quad$(dd)] For every $(A,y,z) \in \Xi\subset L^\infty(\Omega;\mathbb{S}^N)\times W^{1,p}_0(\Omega)\times L^p(\Omega)$, there exists a sequence
$\left\{(A_{\e,k},y_{\e,k},z_{\e,k})\right\}_{{\e>0} \atop {k\in \mathbb{N}}}$ (called a $\Gamma$-realizing sequence) such that
\begin{gather}
\label{3a.13b}
(A_{\e,k},y_{\e,k},z_{\e,k})\in\Xi_{\e,k},\ \forall\,\e>0,\ \forall\, k\in \mathbb{N},\\
\label{3a.13a}
(A_{\e,k},y_{\e,k},z_{\e,k})\stackrel{\tau_1}{\longrightarrow}(A,y,z)\\
\label{3a.13c} I(A,y,z)\ge \limsup_{{\e\to 0} \atop {k\to\infty}}
I_{\e,k}(A_{\e,k},y_{\e,k},z_{\e,k}).
\end{gather}
\end{enumerate}
\end{definition}
Then the following result holds true \cite{KogutLeugering2011}.
\begin{theorem}
\label{Th 3a.12} Assume that the constrained minimization problem
\begin{equation}
\label{3a.14} \Big \langle \inf_{(A,y,z) \in \Xi} I(A,y,z)\Big
\rangle
\end{equation}
is the variational limit of sequence $\left\{\left<\inf_{(A,y,z)\in\Xi_{\e,k}} I_{\e,k}(A,y,z)\right>; {{\e>0} \atop {k\in \mathbb{N}}}\right\}$ as $\e\to 0$ and $k\to\infty$ in the sense of Definition \ref{Def 3a.11} and this problem has a nonempty set of solutions
$$
\Xi^{opt}:=\left\{(A^0,y^0,z^0)\in\Xi\ :\ I(A^0,y^0,z^0)=\inf_{(A,y,z) \in \Xi} I(A,y,z)\right\}.
$$
For every $\e>0$ and $k\in \mathbb{N}$, let
$(A^0_{\e,k},y^0_{\e,k},z^0_{\e,k})\in \Xi_{\e,k}$ be a minimizer of $I_{\e,k}$ on the
corresponding set $\Xi_{\e,k}$. If the sequence
$\{(A^0_{\e,k},y^0_{\e,k},z^0_{\e,k})\}_{{\e>0} \atop {k\in \mathbb{N}}}$ is bounded in $L^\infty(\Omega;\mathbb{S}^N)\times H_0^1(\Omega)\times L^2(\Omega)$, then
there exists a triplet $(A^0,y^0,z^0)\in \Xi$ such that
\begin{gather}
\label{3a.15} \begin{split}
y^0_{\e,k}&\rightharpoonup y^0\ \text{ in $H^1_0(\Omega)$},\quad y^0_{\e,k}\rightarrow y^0\ \text{ in $L^2(\Omega)$},\quad z^0_{\e,k}\rightharpoonup z^0  \text{ in $L^2(\Omega)$},\\
&A_{\e,k}^\frac{1}{2}\rightarrow (A^0)^\frac{1}{2}\ \text{ in $L^1(\Omega;\mathbb{S}^N)$},\quad A_{\e,k}^{\frac{1}{2}}\stackrel{\ast}{\rightharpoonup} (A^0)^{\frac{1}{2}} \text{ in $BV(\Omega;\mathbb{S}^N)$},
\end{split}\\\notag
\inf_{(A,y,z)\in\,\Xi}I(A,y,z)= I\left(A^0,y^0,z^0\right) =\lim_{{\e\to 0} \atop {k\to\infty}}
I_{\e,k}(A^0_{\e,k},y^0_{\e,k},z^0_{\e,k}) \\\label{3a.16}=\lim_{{\e\to 0} \atop {k\to\infty}}\inf_{(A,y,z)\in\,{\Xi}_{\e,k}}
{I}_{\e,k}(A,y,z).
\end{gather}
\end{theorem}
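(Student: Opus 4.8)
The plan is to proceed by the direct method, relying only on the abstract structure of direct variational convergence fixed in Definition \ref{Def 3a.11}; the genuine analytic content has already been isolated in Theorems \ref{teor 6.2} and \ref{Th 4a.3} and Propositions \ref{Th 4a.4} and \ref{cor 6.1}, which are precisely the tools used to certify conditions (d) and (dd), so none of that need be reworked here.

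First I would extract a $\tau_1$-convergent subsequence of the minimizers. By hypothesis $\{(A^0_{\e,k},y^0_{\e,k},z^0_{\e,k})\}$ is bounded in $L^\infty(\Omega;\mathbb{S}^N)\times H_0^1(\Omega)\times L^2(\Omega)$. Since each $A^0_{\e,k}\in\mathfrak{A}_{ad}$, one has $\int_\Omega|D(A^0_{\e,k})^{1/2}|\le\gamma$, so the square roots $(A^0_{\e,k})^{1/2}$ are bounded in $BV(\Omega;\mathbb{S}^N)$ and hence relatively compact in $L^1(\Omega;\mathbb{S}^N)$ by Remark \ref{Rem 3.9}; the states $y^0_{\e,k}$ are weakly relatively compact in $H_0^1(\Omega)$ and, by the Rellich embedding, strongly relatively compact in $L^2(\Omega)$; and the $z^0_{\e,k}$ are weakly relatively compact in $L^2(\Omega)$. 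Passing to a common subsequence indexed by $\e_n\to 0$, $k_n\to\infty$, I obtain a limit triplet $(A^0,y^0,z^0)$ realizing all the convergences in \eqref{3a.15}, where a.e. convergence of a further subsequence of $(A^0_{\e,k})^{1/2}$ guarantees the $L^1$-limit is again the square root of a symmetric, positive-definite matrix field.

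Next I would activate the two halves of the variational limit. Condition (d), applied to the minimizers themselves, yields at once $(A^0,y^0,z^0)\in\Xi$ together with the lower bound
$$
I(A^0,y^0,z^0)\le\liminf_{n\to\infty}I_{\e_n,k_n}(A^0_{\e_n,k_n},y^0_{\e_n,k_n},z^0_{\e_n,k_n}).
$$
On the other hand, I would fix any $(\widehat A,\widehat y,\widehat z)\in\Xi^{opt}$, nonempty by assumption, and apply condition (dd) to produce a $\Gamma$-realizing sequence $(\widetilde A_{\e,k},\widetilde y_{\e,k},\widetilde z_{\e,k})\in\Xi_{\e,k}$ that $\tau_1$-converges to $(\widehat A,\widehat y,\widehat z)$ and satisfies $\limsup_{\e\to 0,\,k\to\infty}I_{\e,k}(\widetilde A_{\e,k},\widetilde y_{\e,k},\widetilde z_{\e,k})\le I(\widehat A,\widehat y,\widehat z)=\inf_\Xi I$.

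Finally I would chain these bounds through minimality. Since $(A^0_{\e,k},y^0_{\e,k},z^0_{\e,k})$ minimizes $I_{\e,k}$ over $\Xi_{\e,k}$, we have $I_{\e,k}(A^0_{\e,k},y^0_{\e,k},z^0_{\e,k})\le I_{\e,k}(\widetilde A_{\e,k},\widetilde y_{\e,k},\widetilde z_{\e,k})$ for all $\e,k$, whence
\begin{align*}
\inf_\Xi I\le I(A^0,y^0,z^0)&\le\liminf_{n\to\infty}I_{\e_n,k_n}(A^0_{\e_n,k_n},y^0_{\e_n,k_n},z^0_{\e_n,k_n})\\
&\le\limsup_{\e\to 0,\,k\to\infty}I_{\e,k}(\widetilde A_{\e,k},\widetilde y_{\e,k},\widetilde z_{\e,k})\le\inf_\Xi I,
\end{align*}
the first inequality holding because $(A^0,y^0,z^0)\in\Xi$. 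Thus every inequality collapses to an equality: $(A^0,y^0,z^0)\in\Xi^{opt}$, the full limit $\lim_{\e\to 0,\,k\to\infty}I_{\e,k}(A^0_{\e,k},y^0_{\e,k},z^0_{\e,k})$ exists and equals $\inf_\Xi I$, and, since the minimizers realize $\inf_{\Xi_{\e,k}}I_{\e,k}$, relation \eqref{3a.16} follows. The abstract argument is light; the real obstacle disguised by the statement is the verification of condition (d)---the $\tau_1$-closedness of $\Xi$ together with the lower-semicontinuous passage through the degenerate operator $\mathcal{A}_{\e,k}$ and the Hammerstein nonlinearity $F_{\e,k}$---which is exactly where Proposition \ref{cor 6.1} and the measure estimate \eqref{3a.6.0} do the heavy lifting.
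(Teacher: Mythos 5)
The paper offers no proof of Theorem \ref{Th 3a.12} at all --- it is quoted directly from \cite{KogutLeugering2011} (``Then the following result holds true'') --- so there is no internal argument to compare against; your proposal is the standard direct-method proof of variational convergence, and it is essentially correct: compactness of the minimizers (BV-bound plus Remark \ref{Rem 3.9} for the controls, Rellich for the states), condition (d) applied to the minimizing family, condition (dd) applied to an element of $\Xi^{opt}$, and the sandwich via minimality is exactly how this result is established in the cited source.

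One point you pass over too quickly: your chain of inequalities yields $\lim_{n\to\infty} I_{\e_n,k_n}(A^0_{\e_n,k_n},y^0_{\e_n,k_n},z^0_{\e_n,k_n})=\inf_{\Xi}I$ only along the extracted subsequence, whereas \eqref{3a.16} asserts the limit over the full family. The bound $\limsup I_{\e,k}(\text{minimizers})\le \inf_{\Xi}I$ does hold for the whole family (minimality against the $\Gamma$-realizing sequence is valid for every $\e,k$), but for the liminf you need the routine Urysohn-type step: any subsequence of minimizers is bounded, hence admits a further $\tau_1$-convergent subsequence whose limit lies in $\Xi$ by condition (d), so the liminf of its costs is at least $\inf_{\Xi}I$; only then does the full limit exist. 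Note also that the convergences \eqref{3a.15} can in general only be claimed along a subsequence (as your construction actually provides) unless the limit problem has a unique solution; this is how the statement must be read, so your proof delivers exactly what can be proven.
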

The main result of this section can be stated as follows.
\begin{theorem}
\label{Th 3a.17}
The optimal control problem \eqref{3.1*}--\eqref{3.1c*} is the
variational limit of the sequence \eqref{3a.1}--\eqref{3a.4} as $\e\to 0$ and $k\to\infty$.
\end{theorem}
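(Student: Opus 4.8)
The plan is to verify the two conditions (d) and (dd) of Definition \ref{Def 3a.11}, since the assertion is precisely that the original problem \eqref{3.1*}--\eqref{3.1c*} is the variational limit of the approximating family \eqref{3a.1}--\eqref{3a.4}. For \textbf{condition (d)}, let $\e_n\to 0$, $k_n\to\infty$ and $(A_n,y_n,z_n)\in\Xi_{\e_n,k_n}$ be $\tau_1$-convergent to $(A,y,z)$ in the sense of \eqref{3a.12.1}. Arguing exactly as in Step~1 of the proof of Theorem~\ref{Th 2.8}, the weak-$\ast$ convergence $A_n^{1/2}\stackrel{\ast}{\rightharpoonup}A^{1/2}$ in $BV(\Omega;\mathbb{S}^N)$, together with the a.e.\ convergence and the lower semicontinuity of the total variation, yields $A\in\mathfrak{A}_{ad}$. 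By Theorem~\ref{Th 4a.3} the weak $H^1_0$-limit $y$ actually belongs to $W^{1,p}_0(\Omega)$, and by Proposition~\ref{cor 6.1} we already obtain the Hammerstein relation $z+BF(y,z)=0$ together with the strong convergence $z_n\to z$ in $L^p(\Omega)$. The remaining and principal point is to pass to the limit in the regularized state equation $\mathcal{A}_{\e_n,k_n}(A_n,y_n)=f$ and to identify $y$ as the unique weak solution $y=y(A)$ of the anisotropic $p$-Laplace problem \eqref{3.1a*}--\eqref{3.1b*}.

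To carry out this passage I would write the state equation in the Minty form, test it against $\varphi-y_n$ for $\varphi\in C_0^\infty(\Omega)$, and split $\Omega$ into the good set $\Omega\setminus B_i$ and the bad set $B_i$ introduced in \eqref{4a.4}. On $\Omega\setminus B_i$ one has $\mathcal{F}_{k_j}(|A^{1/2}_{\e_j,k_j}\nabla y_{\e_j,k_j}|^2)=|A^{1/2}_{\e_j,k_j}\nabla y_{\e_j,k_j}|^2$, and since $\e_n\to 0$, $A^{1/2}_n\nabla\varphi\to A^{1/2}\nabla\varphi$ strongly in $L^p(\Omega)^N$ and $\nabla y_n\rightharpoonup\nabla y$ weakly in $L^p(\Omega)^N$ (Theorem~\ref{Th 4a.3}), the regularized flux converges to $|A^{1/2}\nabla y|^{p-2}A\nabla y$ on this set; the contribution of $B_i$ is controlled by the measure estimate \eqref{3a.6.0} and the uniform bound \eqref{6.3}, and vanishes as $i\to\infty$ because $|B_i|\to 0$ by \eqref{4a.5}. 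A monotonicity (Minty) argument, entirely analogous to Step~1 of Theorem~\ref{Th 2.8}, then produces the variational inequality characterizing $y=y(A)$, so that $(A,y,z)\in\Xi$. Finally the strong convergence $z_n\to z$ in $L^p(\Omega)$, hence in $L^2(\Omega)$ since $\Omega$ is bounded and $p\ge 2$, gives $I_{\e_n,k_n}(A_n,y_n,z_n)=\|z_n-z_d\|^2_{L^2(\Omega)}\to\|z-z_d\|^2_{L^2(\Omega)}=I(A,y,z)$, which is even stronger than the required inequality \eqref{3a.12}.

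For \textbf{condition (dd)}, given $(A,y,z)\in\Xi$ I would construct the $\Gamma$-realizing sequence by freezing the control, $A_{\e,k}:=A$ for all $\e>0$ and $k\in\mathbb{N}$, and taking $y_{\e,k}=y_{\e,k}(A)$ and $z_{\e,k}$ to be the unique solutions of the regularized state equation \eqref{3a.2}--\eqref{3a.3} and the regularized Hammerstein equation \eqref{3a.4}; these exist and are admissible by Proposition~\ref{prop 1.15*}. By Theorem~\ref{teor 6.2} the family $\{y_{\e,k}\}$ is bounded in $H^1_0(\Omega)$, so along any sequence $\e_n\to 0$, $k_n\to\infty$ some subsequence satisfies $y_{\e_n,k_n}\rightharpoonup\tilde y$ in $H^1_0(\Omega)$; the limit passage of condition (d) shows that $\tilde y$ solves $\mathcal{A}(A,\tilde y)=f$, and uniqueness of the weak solution forces $\tilde y=y$, so the whole family converges, $y_{\e,k}\rightharpoonup y$ in $H^1_0(\Omega)$ and, by the compact embedding $H^1_0(\Omega)\hookrightarrow L^2(\Omega)$, $y_{\e,k}\to y$ in $L^2(\Omega)$. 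Applying Proposition~\ref{cor 6.1} with this frozen control and using uniqueness of the Hammerstein solution (Remark~\ref{unique}), we get $z_{\e,k}\to z$ strongly in $L^p(\Omega)$, hence $z_{\e,k}\rightharpoonup z$ in $L^2(\Omega)$, so \eqref{3a.13b}--\eqref{3a.13a} hold; the cost functional again converges, $I_{\e,k}(A,y_{\e,k},z_{\e,k})\to I(A,y,z)$, giving \eqref{3a.13c} with equality.

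The step I expect to be the \emph{main obstacle} is the limit passage in the degenerate regularized anisotropic $p$-Laplacian within condition (d): one must simultaneously send $\e_n\to 0$ and $k_n\to\infty$, reconcile the $H^1_0$-weak and the $L^p$-weak convergence of the gradients, and neutralize the truncation $\mathcal{F}_{k_n}$ on the bad sets $B_i$ before the monotonicity argument can be invoked. Once this identification of the flux is secured, both the $\tau_1$-closedness of the constraints and the recovery-sequence construction follow routinely from the uniqueness of the state and of the Hammerstein solutions, and Theorem~\ref{Th 3a.12} then applies.
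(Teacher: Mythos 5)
Your overall architecture coincides with the paper's: you verify (d) and (dd) of Definition~\ref{Def 3a.11}, obtain $A\in\mathfrak{A}_{ad}$ from the $BV$-convergence, $y\in W^{1,p}_0(\Omega)$ from Theorem~\ref{Th 4a.3}, the Hammerstein relation and the strong convergence $z_n\to z$ from Proposition~\ref{cor 6.1}, and in (dd) you build the recovery sequence by freezing the control and invoking uniqueness of the state and of the Hammerstein solution --- exactly as the paper does. The genuine gap lies in the step you yourself flag as the main obstacle: the limit passage in the regularized state equation. You announce the Minty form tested against $\varphi-y_n$, but then you describe a flux carrying $\mathcal{F}_{k_j}(|A^{1/2}_{\e_j,k_j}\nabla y_{\e_j,k_j}|^2)$, i.e.\ the truncation evaluated at the \emph{state} gradient, and you bring in the bad sets $B_i$ of \eqref{4a.4}; that is the weak formulation, not the Minty one. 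Worse, on the good set you claim that this flux ``converges to $|A^{1/2}\nabla y|^{p-2}A\nabla y$'' merely because $A_n^{1/2}\nabla\varphi\to A^{1/2}\nabla\varphi$ strongly and $\nabla y_n\rightharpoonup\nabla y$ weakly. This is false: a nonlinear function of a weakly convergent sequence need not converge, even weakly, to the nonlinear function of the limit --- identifying the weak limit of the flux at $y_n$ is precisely what monotonicity is supposed to settle, so invoking ``a Minty argument'' \emph{after} asserting this convergence makes the argument circular (and if the convergence were true, no Minty argument would be needed at all).

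The paper's proof avoids this entirely, and the fix is structural, not technical. In the regularized Minty inequality \eqref{3a.18} the nonlinearity sits on the \emph{fixed test function}: the integrand is $(\e_n+\mathcal{F}_{k_n}(|A_n^{1/2}\nabla\varphi|^2))^{\frac{p-2}{2}}\left(A_n\nabla\varphi,\nabla\varphi-\nabla y_n\right)_{\mathbb{R}^N}$. Since $\varphi\in C_0^\infty(\Omega)$ and the matrices $A_n$ are uniformly bounded, $|A_n^{1/2}\nabla\varphi|^2$ is uniformly bounded, so the truncation plays no role for large $k_n$ and one gets $(\e_n+\mathcal{F}_{k_n}(|A_n^{1/2}\nabla\varphi|^2))^{\frac{p-2}{2}}A_n^{1/2}\nabla\varphi\to|A^{1/2}\nabla\varphi|^{p-2}A^{1/2}\nabla\varphi$ strongly in $L^q(\Omega)^N$; pairing this with the weakly convergent gradients (as in Lemma~\ref{Lemma 3.0a}) yields the limit Minty inequality \eqref{1.6*}, which by Definition~\ref{Def 1.1} and Remark~\ref{Rem 1.1} identifies $y=y(A)$. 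No decomposition into $B_i$ is needed here: the sets \eqref{4a.4}, the measure estimate \eqref{3a.6.0} and the bound \eqref{6.3} are used only inside the proof of Theorem~\ref{Th 4a.3} to secure $y\in W^{1,p}_0(\Omega)$, which you may simply cite, as you do. Your treatment of condition (dd) and your remark that the cost actually converges (not merely lower semicontinuously) are correct and agree with the paper.
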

\begin{proof}
To show, that all conditions of Definition \ref{Def 3a.11} hold true, we begin with the property $(d)$. Let $\left\{\e_n\right\}_{n\in \mathbb{N}}$, $\left\{k_n\right\}_{n\in \mathbb{N}}$, and
$\left\{(A_n,y_n,z_n)\right\}_{n\in \mathbb{N}}$ be sequences such that $\e_n
\rightarrow 0$ and $k_n\to\infty$ as $n\rightarrow \infty$, $(A_n,y_n,z_n)\in
\Xi_{\e_n,k_n}$ $\forall\,n\in \mathbb{N}$, and
$(A_n,y_n,z_n)\rightarrow (A,y,n)$ in the sense of relations \eqref{3a.12.1}.
We note that $y\in W^{1,p}_0(\Omega)$ by Theorem~\ref{Th 4a.3}.
Since the inequality \eqref{3a.12} is a direct consequence of semicontinuity of the cost functional $I$ with respect to $\tau_1$-convergence in $L^\infty(\Omega;\mathbb{S}^N)\times H^1_0(\Omega)\times L^2(\Omega)$, it remains to show that $(A,y,z)\in \Xi$. To this end, we note that the inclusion $A\in \mathfrak{A}_{ad}$ is guaranteed by the strong convergence $A^\frac{1}{2}_n\rightarrow A^\frac{1}{2}$ in $L^1(\Omega)$ and condition $A^\frac{1}{2}_n\in \mathfrak{A}_{ad}$ for all $n\in \mathbb{N}$. In order to show that $(A,y)$ is related by \eqref{3.1a*}, let us fix an arbitrary  function $\varphi\in C^\infty_0(\Omega)$ and pass to the limit in the Minty inequality (see Remark \ref{Rem 1.1})
\begin{equation}
\label{3a.18}
\int_\Omega (\e_n+\mathcal{F}_{k_n}(|A_n^\frac{1}{2}\nabla \varphi|^2))^{\frac{p-2}{2}}\left(A_n\nabla \varphi,\nabla\varphi-\nabla y_n\right)_{\mathbb{R}^N}\,dx \ge\int_\Omega {f}(\varphi- y_n)\,dx,
\end{equation}
as $n\to\infty$.  Taking into account that $A_n^\frac{1}{2}\nabla \varphi\to A^\frac{1}{2}\nabla \varphi$ strongly in $L^r(\Omega)^N$ any $r> 1$, we have (see for comparison Proposition \ref{prop 4.1})
$$
(\e_n+\mathcal{F}_{k_n}(|A_n^\frac{1}{2}\nabla \varphi|^2))^{\frac{p-2}{2}}A_n^\frac{1}{2}\nabla \varphi\rightarrow |A^\frac{1}{2}\nabla \varphi|^{p-2}A^\frac{1}{2}\nabla \varphi\ \text{  strongly in $L^q(\Omega)^N$},
$$
and making use of Lemma~\ref{Lemma 3.0a}, we get
\begin{align*}
\lim_{n\to\infty} &\int_\Omega (\e_n+\mathcal{F}_{k_n}(|A_n^\frac{1}{2}\nabla \varphi|^2))^{\frac{p-2}{2}}\left(A_n\nabla \varphi,\nabla\varphi\right)_{\mathbb{R}^N}\,dx\\=& \int_\Omega |A^\frac{1}{2}\nabla \varphi|^{p-2}\left(A\nabla \varphi,\nabla\varphi\right)_{\mathbb{R}^N}\,dx,\\
\lim_{n\to\infty} &\int_\Omega (\e_n+\mathcal{F}_{k_n}(|A_n^\frac{1}{2}\nabla \varphi|^2))^{\frac{p-2}{2}}\left(A_n\nabla \varphi,\nabla y_n\right)_{\mathbb{R}^N}\,dx\\= &\int_\Omega |A^\frac{1}{2}\nabla \varphi|^{p-2}\left(A\nabla \varphi,\nabla y\right)_{\mathbb{R}^N}\,dx.
\end{align*}
Upon passing to the limit in \eqref{3a.18} as $n\to\infty$, we arrive at  the relation
\begin{equation*}
\int_\Omega |A^\frac{1}{2}\nabla \varphi|^{p-2}\left(A\nabla \varphi,\nabla\varphi-\nabla y\right)_{\mathbb{R}^N}\,dx \ge\int_\Omega {f}(\varphi- y)\,dx,
\end{equation*}
which means that $y=y(A)\in W^{1,p}_0(\Omega)$ is a weak solution to the boundary value problem \eqref{3.1a*}--\eqref{3.1b*}. Making use of Proposition \ref{cor 6.1} we obtain that $z=z(A,y)$ is a solution of Hammerstein equation \eqref{3.1c*}, so $(A,y,z)\in \Xi$.

The next step is to prove  property (dd) of Definition~\ref{Def 3a.11}. Let $(A,y,z)\in\Xi$ be an arbitrary admissible pair to the original OCP \eqref{3.1*}--\eqref{3.1c*}. We construct a $\Gamma$-realizing sequence $\left\{(A_{\e,k},y_{\e,k},z_{\e,k})\right\}_{{\e>0} \atop {k\in \mathbb{N}}}$ as follows: $A_{\e,k}\equiv A$ for all $\e>0$ and $k\in \mathbb{N}$, and $y_{\e,k}$ is a corresponding weak solution to regularized BVP \eqref{3a.2}--\eqref{3a.3} under $A=A_{\e,k}$ and $z_{\e,k}$ is a solution of regularized Hammerstein equation \eqref{3a.4} under $y=y_{\e,k}$. Then, $(A_{\e,k},y_{\e,k},z_{\e,k})\in\Xi_{\e,k}$ for all $\e>0$ and $k\in \mathbb{N}$, and, as follows from  Theorem~\ref{teor 6.2} and Proposition~\ref{Th 4a.4}, this sequence is relatively compact with respect to the $\tau_1$-convergence in $L^\infty(\Omega;\mathbb{S}^N)\times H^1_0(\Omega)\times L^2(\Omega)$. Hence, applying the arguments of the previous step, we obtain: all cluster pairs of the sequence $\left\{(A_{\e,k},y_{\e,k},z_{\e,k})\right\}_{{\e>0} \atop {k\in \mathbb{N}}}$ with respect to the $\tau_1$-convergence in $L^\infty(\Omega;\mathbb{S}^N)\times H^1_0(\Omega)\times L^2(\Omega)$ are related by \eqref{3.1a*}--\eqref{3.1c*} and belong to $L^\infty(\Omega;\mathbb{S}^N)\times W^{1,p}_0(\Omega)\times L^p(\Omega)$. The boundary value problem \eqref{3.1a*}--\eqref{3.1b*} has a unique weak solution for each $A\in \mathfrak{A}_{ad}$, hence $y_{\e,k}\rightharpoonup y$ in $H_0^1(\Omega)$. Just as well, according to Remark \ref{unique}, the Hammerstein equation \eqref{3.1c*} has a unique solution for each $y\in W_0^{1,p}(\Omega)$, hence, $z_{\e,k}\rightharpoonup z$.
It remains to establish relation \eqref{3a.13c}, which obviously holds due to strong convergence $z_{\e,k}\to z$, given by Proposition \ref{cor 6.1}.
\end{proof}

%%%%%%%%%%%%%%%%%%%%%%%%%%%%%%%%%%%%%%%%%%%%%%%%%%%%%%%%%%%%%%%%%%%%%%%%%%%%

\section*{Appendix}.

\textbf{Proof of Proposition \ref{Prop 3a.2}}

\textit{Boundedness.}
From the assumptions on $\mathcal{F}_k$ and the boundedness of $A$, we get
\begin{align*}
\|\mathcal{A}_{\e,k}\|&=\sup_{\|y\|_{H^1_0(\Omega)}\le 1}\|\mathcal{A}_{\e,k} y\|_{H^{-1}(\Omega)} \\&=\sup_{\|y\|_{H^1_0(\Omega)}\le 1}
\sup_{\|v\|_{H^1_0(\Omega)}\le 1}\left<\mathcal{A}_{\e,k} y,v\right>_{H^{-1}(\Omega);H_0^1(\Omega)}\\
 &= \sup_{\|y\|_{H^1_0(\Omega)}\le 1}
\sup_{\|v\|_{H^1_0(\Omega)}\le 1} \int_\Omega \left[\e+\mathcal{F}_k\big(|A^\frac{1}{2}\nabla y|^2\big)\right]^{\frac{p-2}{2}}\left(\nabla v,A\nabla y\right)_{\mathbb{R}^N}
dx\\
&\le \frac{\|\xi_2\|^2_{L^\infty(\Omega)}}{\left(\e+k^2+1\right)^{\frac{2-p}{2}}}\sup_{\|y\|_{H^1_0(\Omega)}\le 1}
\sup_{\|v\|_{H^1_0(\Omega)}\le 1} \|y\|_{H^1_0(\Omega)}\|v\|_{H^1_0(\Omega)}=C_{\e,k}.
\end{align*}

\textit{Strict monotonicity}
We make use of the following algebraic inequality, which is proved in \cite[Proposition 4.4]{KuMa151}:
\begin{equation*}
\label{3a.3.A.1}
\left(\big(\e+\mathcal{F}_k(|a|^2)\big)^{\frac{p-2}{2}}a-
\big(\e+\mathcal{F}_k(|b|^2)\big)^{\frac{p-2}{2}}b,a-b\right)_{\mathbb{R}^N} \ge \e^{\frac{p-2}{2}} |a-b|^2,\,a,b\in \mathbb{R}^N.
\end{equation*}
With this, having put $a:=A^\frac{1}{2}\nabla y$, $b:=A^\frac{1}{2}\nabla v$  we obtain
\begin{align*}
\big< \mathcal{A}_{\e,k}(A,y)-\mathcal{A}_{\e,k}(A,v), y-v\big>_{H^{-1}(\Omega);H^1_0(\Omega)}
&\ge \e^{\frac{p-2}{2}} \int_\Omega |A^\frac{1}{2}\nabla y-A^\frac{1}{2}\nabla v|^2  dx \\
&\ge
\alpha^2 \e^{\frac{p-2}{2}} \|y-v\|^2_{H^1_0(\Omega)}\ge 0.
\end{align*}
Since the relation
$
\big< \mathcal{A}_{\e,k}(A,y)-\mathcal{A}_{\e,k}(A,v), y-v\big>_{H^{-1}(\Omega);H^1_0(\Omega)}=0
$
implies $y=v$, it follows that the strict monotonicity property \eqref{1.4}--\eqref{1.4a} holds true for each $A\in \mathfrak{A}_{ad}$, $k\in \mathbb{N}$, and $\e>0$.

\textit{Coercivity.}
The coercivity property obviously follows from the estimate
\begin{equation}\label{3.4.new}
\big<\mathcal{A}_{\e,k}(A,y),y\big>_{H^{-1}(\Omega);H^1_0(\Omega)}
\ge\alpha^2\e^{\frac{p-2}{2}}\|y\|^2_{H_0^1(\Omega)}.
\end{equation}

\textit{Semi-continuity.}
In order to get the equality
\[
\lim_{t\to 0} \langle \mathcal{A}_{\e,k}(A,y+tw),v\rangle_{H^{-1}(\Omega);H^1_0(\Omega)}= \langle\mathcal{A}_{\e,k}(A,y),v\rangle_{H^{-1}(\Omega);H^1_0(\Omega)},
\]
it is enough to observe that
\begin{equation*}
(\e+\mathcal{F}_k(|A^\frac{1}{2}(\nabla y+t\nabla w)|^2))^{\frac{p-2}{2}}A\left(\nabla y+t\nabla w\right)\rightarrow (\e+\mathcal{F}_k(|A^\frac{1}{2}\nabla y|^2))^{\frac{p-2}{2}}A\nabla y,
\end{equation*}
as $t\to 0$ almost everywhere in $\Omega$, and make use of Lebesgue's dominated convergence theorem.

\textbf{Proof of Proposition \ref{prop 3.4}}

Similarly to the proofs of Proposition \ref{Prop 3a.2}, the boundedness, strict monotonicity, and radial continuity of $F_{\e,k}$ can be shown. It remains to prove the compactness property. Let $y_n\rightharpoonup y_0$  in $H_0^1(\Omega)$. Hence, $y_n\to y_0$ strongly in $L^2(\Omega)$ and, up to a subsequence, $y_n\to y_0$ a.e. in $\Omega$. We must show that $F_{\e,k}(y_n,z)\to F_{\e,k}(y_0,z)$ strongly in $L^2(\Omega)$, i.e.
\begin{multline}\label{3.51}
\int_\Omega|F_{\e,k}(y_n,z)- F_{\e,k}(y_0,z)|^2\,dx\\=\int_\Omega |(\e+\mathcal{F}_k(|y_n|^2))^{\frac{p-2}{2}}y_n-(\e+\mathcal{F}_k(|y_0|^2))^{\frac{p-2}{2}}y_0|^2dx\to 0\,\mbox{as} \,n\to \infty.
\end{multline}
Obviously, $|(\e+\mathcal{F}_k(|y_n|^2))^{\frac{p-2}{2}}y_n-(\e+\mathcal{F}_k(|y_0|^2))^{\frac{p-2}{2}}y_0|^2\to 0$ a.e. in $\Omega$. Also, the following estimate implies the equi-integrability property of this function
\begin{multline*}
\int_\Omega |(\e+\mathcal{F}_k(|y_n|^2))^{\frac{p-2}{2}}y_n-(\e+\mathcal{F}_k(|y_0|^2))^{\frac{p-2}{2}}y_0|^2\,dx\\
\le 2 (\e+k^2+1)^{p-2}\int_\Omega (|y_n|^2+|y_0|^2)\,dx\le C_{\e,k},\;\forall\, n\in\mathbb{N}.
\end{multline*}
Therefore, due to Lebesgue's Theorem \ref{Th_1.9},
$$|(\e+\mathcal{F}_k(|y_n|^2))^{\frac{p-2}{2}}y_n-(\e+\mathcal{F}_k(|y_0|^2))^{\frac{p-2}{2}}y_0|^2\to 0\mbox{ strongly in }L^1(\Omega),$$
 i.e. \eqref{3.51} holds true.

\textbf{Proof of Proposition \ref{prop 6.1}.}

Let us fix an arbitrary element $y$ of $H^1_0(\Omega)$. We associate with this element the set $\Omega_k(A,y)$, where $\Omega_k(A,y):=\left\{x\in\Omega\ :\ |A^\frac{1}{2}\nabla y(x)|>\sqrt{k^2+1}\right\}$. Then
\begin{align}
\label{3a.6.2.1}
\int_\Omega g y\,dx &= \|g\|_{L^2(\Omega)}\|y\|_{L^2(\Omega)}\stackrel{\text{by Friedrich's inequality}}{\le} C_\Omega \|g\|_{L^2(\Omega)}\|\nabla y\|_{L^2(\Omega)^N}\\\notag
&\le C_\Omega\|g\|_{L^2(\Omega)}\big[\|\nabla y\|_{L^2(\Omega\setminus\Omega_k(A,y))^N}+ \|\nabla y\|_{L^2(\Omega_k(A,y))^N}\big].
\end{align}
Since $\mathcal{F}_k(|A^\frac{1}{2}\nabla y|^2)=|A^\frac{1}{2}\nabla y|^2$ a.e. in $\Omega\setminus \Omega_k(A,y)$, and
$k^2\le \mathcal{F}_k(|A^\frac{1}{2}\nabla y|^2)\le k^2+ 1$ a.e. in $\Omega_k(A,y)$ $\forall\,k\in \mathbb{N}$,
we get
\begin{align*}
&\|\nabla y\|_{L^2(\Omega\setminus\Omega_k(A,y))^N}\le\alpha^{-1}\left(\int_{\Omega\setminus\Omega_k(A,y)}|A^\frac{1}{2}\nabla y|^2\,dx\right)^\frac{1}{2}\\
&\le
\alpha^{-1} |\Omega\setminus\Omega_k(A,y)|^{\frac{p-2}{2p}}\left(\int_{\Omega\setminus\Omega_k(A,y)} |A^\frac{1}{2}\nabla y|^p\,dx\right)^{\frac{1}{p}}\\
&\le
\alpha^{-1}|\Omega|^{\frac{p-2}{2p}} \left(\int_{\Omega\setminus\Omega_k(A,y)} (\e+|A^\frac{1}{2}\nabla y|^2)^{\frac{p-2}{2}}|A^\frac{1}{2}\nabla y|^2\,dx\right)^{\frac{1}{p}}\\
&=\alpha^{-1} |\Omega|^{\frac{p-2}{2p}} \left(\int_{\Omega\setminus\Omega_k(A,y)} (\e+\mathcal{F}_k(|A^\frac{1}{2}\nabla y|^2))^{\frac{p-2}{2}}|A^\frac{1}{2}\nabla y|^2\,dx\right)^{\frac{1}{p}}
\\ & \le \alpha^{-1}|\Omega|^{\frac{p-2}{2p}}\|y\|_{A,\e,k},
\end{align*}
and
\begin{multline*}
\|\nabla y\|_{L^2(\Omega_k(A,y))^N}\le \alpha^{-1}\left(\int_{\Omega_k(A,y)}|A^\frac{1}{2}\nabla y|^2\,dx\right)^{\frac{1}{2}}\\
\le \ds\frac{\alpha^{-1}}{k^\frac{p-2}{2}}\left(\int_{\Omega_k(A,y)} (\e+\mathcal{F}_k(|A^\frac{1}{2}\nabla y|^2))^{\frac{p-2}{2}}|A^\frac{1}{2}\nabla y|^2\,dx\right)^{\frac{1}{2}}\le \ds\frac{\alpha^{-1}}{k^\frac{p-2}{2}}\|y\|^{\frac{p}{2}}_{A,\e,k}.
\end{multline*}
As a result, inequality \eqref{3a.6.2.1} finally implies the desired estimate.
The proof is complete.

\textbf{Proof of Proposition \ref{prop 4.1}}

Due to strong convergence $\left(\e_n+\mathcal{F}_{k_n}(y_0^2)\right)^\frac{p-2}{2}y_0\rightarrow |y_0|^{p-2}y_0$ in $L^q(\Omega)$ and relations
\begin{multline}
\left(\e_n+\mathcal{F}_{k_n}(y_n^2)\right)^\frac{p-2}{2}y_n-\left(\e_n+\mathcal{F}_{k_n}(y_0^2)\right)^\frac{p-2}{2}y_0
=\left(\e_n+\mathcal{F}_{k_n}(y_n^2)\right)^\frac{p-2}{2}(y_n-y_0)\\+\left(\left(\e_n+\mathcal{F}_{k_n}(y_n^2)\right)^\frac{p-2}{2}-
\left(\e_n+\mathcal{F}_{k_n}(y_0^2)\right)^\frac{p-2}{2}\right)y_0=I_1^k+I_2^k,
\end{multline}
it is enough to prove that $I_i^k\to 0$ strongly in $L^q(\Omega)$ as $k\to \infty$ for $i=1,2$.

\textit{Step 1.} To prove  $\|I_1\|^q_{L^q}=\int_\Omega \left|\left(\e_n+\mathcal{F}_{k_n}(y_n^2)\right)^\frac{p-2}{2}\right|^\frac{p}{p-1}|y_n-y_0|^\frac{p}{p-1}\,dx\to 0$ we use Lemma \ref{Gikov}.
The initial suppositions imply that sequence $\{\varphi_n=|y_n-y_0|^\frac{p}{p-1}\}_{n\in\mathbb{N}}$ is bounded in $L^1(\Omega)$ and converges to 0 almost everywhere in $\Omega$. On this step we are left to prove only the equi-integrability property of the sequence
$\left|\e_n+\mathcal{F}_{k_n}(y_n^2)\right|^\frac{p(p-2)}{2(p-1)}$.  Let us notice, that $\ds\frac{p(p-2)}{2(p-1)}< \ds\frac{p-1}{2}<\ds\frac{p}{2}$ and, using H\"{o}lder inequality with exponents $1/s+1/s'=1$, where $s=\ds\frac{p}{2}/\ds\frac{p(p-2)}{2(p-1)}=\ds\frac{p-1}{p-2}$, $s'=p-1$, we have
\begin{multline}\label{6.6}
\int_\Omega\left|\e_n+\mathcal{F}_{k_n}(y_n^2)\right|^\frac{p(p-2)}{2(p-1)}\,dx
 \le\left(\int_\Omega \left|\e_n+\mathcal{F}_{k_n}(y_n^2)\right|^\frac{p}{2}dx\right)^\frac{p-2}{p-1}|\Omega|^\frac{1}{p-1}\\
 \le\left(\int_\Omega \left|\e_n+\mathcal{F}_{k_n}(y_n^2)\right|^\frac{p-2}{2}(\e_n+y_n^2)dx\right)^\frac{p-2}{p-1}|\Omega|^\frac{1}{p-1}\\
 =\left(\int_\Omega (J_1+J_2)\,dx\right)^\frac{p-2}{p-1}|\Omega|^\frac{1}{p-1}\le C|\Omega|^\frac{1}{p-1}.
\end{multline}
 Indeed, $\int_\Omega J_1\,dx =\e_n\int_\Omega \left|\e_n+\mathcal{F}_{k_n}(y_n^2)\right|^\frac{p-2}{2}\,dx\to 0$, because $\e_n\to 0$ and within a subsequence, still denoted by the same index, $\left|\e_n+\mathcal{F}_{k_n}(y_n^2)\right|^\frac{p-2}{2}\to |y_0|^{p-2}$ a.e. in $\Omega$. As for the second integral, we have
$$\int_\Omega J_2\,dx\le \|\left(\e_n+\mathcal{F}_{k_n}(y_n^2)\right)^\frac{p-2}{2}y_n\|_{L^2(\Omega)}\|y_n\|_{L^2(\Omega)}\le C\sup_{n\in \mathbb{N}}\|y_n\|_{L^2(\Omega)}.$$
\textit{Step 2.} Here we prove that $I_2^q\to 0$ strongly in $L^1(\Omega)$. Indeed, within a subsequence,
 $(\e_n+\mathcal{F}_{k_n}(y_n^2))^\frac{p-2}{2}- (\e_n+\mathcal{F}_{k_n}(y_0^2))^\frac{p-2}{2}\to 0$ a.e. in $\Omega$ and closely following the arguments of the previous step it can be shown that
 \begin{multline*}
 \int_\Omega\left|(\e_n+\mathcal{F}_{k_n}(y_n^2))^\frac{p-2}{2}-(\e_n+\mathcal{F}_{k_n}(y_0^2))^\frac{p-2}{2}\right|^\frac{p}{p-1}\,dx\\ \le  \left(\int_\Omega|\e_n+\mathcal{F}_{k_n}(y_n^2)|^\frac{p(p-2)}{2(p-1)}\,dx+\int_\Omega|\e_n+\mathcal{F}_{k_n}(y_0^2)|^\frac{(p-2)p}{2(p-1)}\,dx\right)^\frac{p}{p-1}\le c.
 \end{multline*}
It remains to apply Lemma \ref{Gikov}.

\textit{Acknowledgements.} The research was partially supported by Grant of the President of Ukraine GP/F61/017 and Grant of NAS of Ukraine 2284/15.

\end{document}